\newcommand{\bd}{\begin{description}}
\newcommand{\ed}{\end{description}}
\newtheorem{theorem}{Theorem}[section]
\newtheorem{lemma}[theorem]{Lemma}
\newtheorem{remark}[theorem]{Remark}
\newtheorem{corollary}[theorem]{Corollary}
\newtheorem{algorithm}[theorem]{Algorithm}
\newtheorem{example}[theorem]{Example}
\newtheorem{definition}[theorem]{Definition}
\newcommand{\bi}{\begin{itemize}}
\newcommand{\ei}{\end{itemize}}
\def\GF{\mathbb{F}}
\def\Alt{{\rm Alt}}
\def\Sym{{\rm Sym}}
\def\2G2{\ensuremath{^2{\rm G}_2}}
\def\sl{{\rm{SL}}}
\def\gl{{\rm{GL}}}
\def\su{{\rm{SU}}}
\def\psl{{\rm{PSL}}}
\def\so{{\rm{SO}}}
\def\ppom{{\rm{(P) \Omega }}}
\def\om{{\rm{\Omega }}}
\def\sp{{\rm{Sp}}}
\def\psp{{\rm{PSp}}}
\def\ppsl{ ( {\rm{P}} ) {\rm{SL}}}
\def\ppsp{ ( {\rm{P}} ) {\rm{Sp}}}
\def\ppsu{ ( {\rm{P}} ) {\rm{SU}}}
\def\ppone{p \equiv 1 \bmod 4}
\def\pmone{p \equiv -1 \bmod 4}
\def\qpone{q \equiv 1  \bmod 4}
\def\qmone{q \equiv -1 \bmod 4}
\title{Steinberg presentations of black box classical groups in small characteristics}
\author{Alexandre Borovik\footnote{School of Mathematics, University of Manchester, UK; alexandre.borovik@gmail.com} and \c{S}\"{u}kr\"{u} Yal\c{c}\i nkaya\footnote{Bilgi University, Istanbul, Turkey; sukru.yalcinkaya@gmail.com}}
\begin{document}

\maketitle

\begin{abstract}
The main component of (constructive) recognition algorithms for black box groups of Lie type in computational group theory is the construction of unipotent elements. In the existing algorithms unipotent elements are found by random search and therefore the running time of these algorithms is polynomial in the underlying field size $q$ which makes them unfeasible for most practical applications  \cite{guralnick01.169}. Meanwhile, the input size of recognition algorithms involves only $\log q$. The present paper introduces a new approach to construction of unipotent elements in which the running time of the algorithm is quadratic in characteristic $p$  of the underlying field and is polynomial in $\log q$; for small values of $p$ (which make a vast and practically important class of problems), the complexity of these algorithms is polynomial in the input size.

For $\psl_2(q)$, $\qpone$, we present a Monte-Carlo algorithm which constructs a root subgroup $U$, the maximal torus $T$ normalizing $U$ and a Weyl group element $w$ which conjugates $U$ to its opposite. Moreover, we extend this result and construct Steinberg generators for the black box untwisted classical groups defined over a field of odd size $q=p^k$ where $\qpone$. Our algorithms run in time quadratic in characteristic $p$ of the underlying field and polynomial in $\log q$ and the Lie rank $n$ of the group.

The case $\qmone$ requires the use of additional tools and is treated separately in our next paper \cite{suko12B}. Further, and much stronger results can be found in \cite{suko12E,suko12F}.
\end{abstract}

\begin{footnotesize}
\tableofcontents
\end{footnotesize}

\section{Introduction and the principal results}

\subsection{Black box groups}

The purpose of the present paper is to introduce an efficient algorithm which constructs   the so-called Steinberg generators of black box classical groups in small odd characteristics; it will be used in subsequent papers \cite{suko12E,suko12F} for recovery of the structure of these groups.

Black box groups were introduced by Babai and Szemeredi \cite{babai84.229} as an idealized setting for randomized algorithms for solving permutation and matrix group problems in computational group theory.

A black box group $X$ is a black box (or an oracle, or a device, or an algorithm) operating with $0$-$1$ strings of bounded length which encrypt (not necessarily in a unique way) elements of some finite group $G$ (in various classes of black box problems the isomorphism type of $G$ could be known in advance or unknown). The functionality of the black box is specified by the following axioms: the black box

\begin{itemize}
\item[\textbf{BB1}] produces strings encrypting random elements from $G$;
\item[\textbf{BB2}]computes a string encrypting the product of two group elements given by strings or a string encrypting the inverse of an element given by a string; and
\item[\textbf{BB3}] compares whether two strings encrypt the same element in $G$---therefore we have a canonical map (not necessarily easily computable in practice) $\pi: X \to G$.
\end{itemize}

We shall say in this situation that $X$ is a \emph{black box over $G$} or that $X$ \emph{encrypts} $G$.

A typical example is provided by a group $G$ generated in a big matrix group $\gl_n(r^k)$ by several matrices $g_1,\dots, g_l$. The product replacement algorithm \cite{celler95.4931} produces  a sample of (almost) independent elements from a distribution on $G$ which is close to the uniform distribution (see the discussion and further development in \cite{babai00.627, babai04.215, bratus99.91, gamburd06.411, lubotzky01.347, pak00.476, pak01.301, pak.01.476, pak02.1891}). We can, of course, multiply, invert, compare matrices. Therefore the computer routines for these operations together with the sampling of the product replacement algorithm run on the  tuple of generators $(g_1,\dots, g_l)$ can be viewed as a black box $X$ encrypting the group $G$. The group $G$ could be unknown---in which case we are interested in its isomorphism type---or it could be known, as it happens in a variety of other black box problems. For example, if we already know that $G$ is isomorphic to, say, $\sl_{2m}(r^s)$, we may wish to construct in $G$ subgroups $H_1\cong \sp_{2m}(r^s)$, $H_2\cong \sl_{2m}(r)$ and $H_3 \cong \sp_{2m}(r)$ in such a way that $H_1\cap H_2 = H_3$. (This problem is actually solved in one of the  subsequent papers in this series \cite{suko12B}.) In our set-up, this means that we wish to construct black boxes $Y_i$, $i=1,2,3$, over $H_i$ and embeddings $Y_i \to X$. This formalism is further developed in \cite{suko12F,suko12D}.

Notice that even in routine examples the number of elements of a matrix group $G$ could be astronomical, thus making many natural questions about the black box $X$ over $G$---for example, finding the isomorphism type or the order of $G$---inaccessible for all known deterministic methods. Even when $G$ is cyclic and thus is characterized by its order, existing approaches to finding multiplicative orders of matrices over finite fields are conditional and involve oracles either for the discrete logarithm problem in finite fields or for prime factorization  of integers.

Nevertheless black box problems for matrix groups have a feature which makes them more accessible:

\begin{itemize}
\item[\textbf{BB4}] We are given a \emph{global exponent} of $X$, that is, a natural number $E$ such that it is expected that $x^E = 1$ for all elements $x \in X$ while computation of $x^E$ is computationally feasible.
\end{itemize}

Usually, for a black box group $X$ arising from a subgroup in the ambient group $\gl_n(r^k)$, the exponent of $\gl_n(r^k)$ can be taken for a global exponent of $X$.

Abusing terminology, in this paper we shall frequently identify the black box $X$ and the group $G$ encrypted by $X$ (as we have already done in formulation of Axiom BB4); this is relatively safe in simpler black box problems about matrix groups over finite fields. However, more sophisticated algorithms which we shall discuss in subsequent papers will require a certain level of hygiene which will make  identification of black box groups with the groups which they encrypt inconvenient.

\begin{quote}
\emph{In this paper, we assume that all our black box groups satisfy assumptions BB1--BB4.}
\end{quote}

We emphasise that we do not assume that black box groups under consideration in this paper are given as subgroups of ambient matrix groups; thus our approach is wider than that of the computational matrix group project \cite{leedham-green01.229}. This makes us to be a more careful with basic terminology. In particular, given two black boxes $X,Y$ encrypting groups $G,H$, correspondingly, we say that a map $\alpha$ which assigns strings from $X$ to strings from $Y$ is a morphism of black box groups, if there is a homomorphism $\beta:G \to H$ such that the following diagram  is commutative:
\begin{diagram}
X &\rTo^{\alpha} &Y\\
\dDotsto_{\pi_X} & &\dDotsto_{\pi_Y}\\
G &\rTo^{\beta} & H
\end{diagram}
(here $\pi_X$ and $\pi_Y$ are canonical projections of $X$ and $Y$ onto $G$ and $H$, correspondingly).

\subsection{Black box group problems}

We shall outline an hierarchy of typical black box group problems.

\begin{description}
\item[\textbf{Verification Problem.}] Is the unknown group encrypted by a black box group $X$ isomorphic to the given group $G$ (``target group'')?

\item[\textbf{Recognition Problem.}] Determine the isomorphism class of the group encrypted by $X$.
\end{description}

The Verification  Problem is rarely discussed in the literature on black box groups on its own but frequently arises as a sub-problem within more complicated Recognition Problems. The two problems have dramatically different complexity. For example, the celebrated Miller-Rabin algorithm \cite{rabin80.128} for testing primality of the given odd natural number $n$ in nothing else but a  black box algorithm for solving the verification problem for the multiplicative group   $\mathbb{Z}/n\mathbb{Z}^*$ of residues modulo $n$ (given by a simple black box: take your favorite random numbers generator and generate random integers between $1$ and $n$) and the cyclic group $\mathbb{Z}/(n-1)\mathbb{Z}$ of order $n-1$ as the target group. On the other hand, if $n=pq$ is the product of primes $p$ and $q$, the recognition problem for the same black box group means finding the direct product decomposition
\[
\mathbb{Z}/n\mathbb{Z}^* \cong \mathbb{Z}/(p-1)\mathbb{Z} \oplus \mathbb{Z}/(q-1)\mathbb{Z}
\]
which is equivalent to factorization of $n$ into product of primes.

The next step after finding the isomorphism type of the black box group $X$ is

\begin{description}
\item[Constructive Recognition.] Suppose that a black box group $X$ encrypts a concrete and explicitly given group $G$. Rewording a definition given in \cite{brooksbank08.885},
    \begin{quote}
         The goal of a constructive recognition algorithm is
to construct an effective isomorphism $\Psi: G \longrightarrow X$. That is, given $g\in G$, there is an efficient procedure to construct a string $\Psi(g)$ representing $g$  in $X$ and given a string $x$  produced by $X$, there is an efficient procedure to construct the element $\Psi^{-1}(x) \in G$ represented by $X$.
    \end{quote}
\end{description}

However, there are still no really efficient constructive recognition algorithms for black box groups $X$ of (known) Lie type over a finite field of large order $q=p^k$. The first computational obstacles for known algorithms
\cite{brooksbank01.95,brooksbank01.79,brooksbank03.162,brooksbank06.256,brooksbank08.885,celler98.11,conder06.1203,leedham-green09.833} are the need to construct unipotent elements in black box groups, \cite{brooksbank01.95,brooksbank01.79,brooksbank03.162,brooksbank06.256,brooksbank08.885,celler98.11} or to solve discrete logarithm problem for matrix groups \cite{conder01.113,conder06.1203,leedham-green09.833}.

Unfortunately,
the proportion of the unipotent elements in $X$ is $O(1/q)$ \cite{guralnick01.169}. Moreover the probability that the order of a random element is divisible by $p$ is also $O(1/q)$, so one has to make  $O(q)$ (that is, \emph{exponentially many}, in terms of the input length $O(\log q)$ of the black boxes and the algorithms) random selections of elements in a given group to construct a unipotent element. However, this brute force approach is still working for small values of $q$, and  Kantor and Seress \cite{kantor01.168} used it to develop an algorithm for recognition of black box classical groups. Later the algorithms of \cite{kantor01.168} were upgraded to polynomial time constructive recognition algorithms \cite{brooksbank03.162, brooksbank01.95, brooksbank06.256, brooksbank08.885} by assuming the availability of additional \emph{oracles}:
 \begin{itemize}
 \item the \emph{discrete logarithm oracle} in $\mathbb{F}_q^*$, and
  \item the  \emph{$\sl_2(q)$-oracle}.
   \end{itemize}

   The latter is a procedure for the constructive recognition of $\sl_2(q)$; see  discussion in \cite[Section~3]{brooksbank08.885}.

\begin{description}
\item[Structure recovery.] Suppose that a black box group $X$ encrypts a concrete and explicitly given group $G$. A weaker, but frequently feasible and very useful version of constructive recognition is what we call \emph{structure recovery}: construction of a probabilistic polynomial time morphism $$\Psi: G \longrightarrow X.$$ That is, given $g\in G$, there is an efficient procedure to construct a string $\Psi(g)$ representing $g$  in $X$---but we do not require that the map $\Psi$ can be efficiently reversed.
\end{description}

Structure recovery of  black box groups encrypting Chevalley groups in odd characteristic is the principal aim of papers \cite{suko12E,suko12F, suko12B,suko12C}, the present paper prepares some scaffoldings for this work. A more detailed discussion of methodological issues  could be found in \cite{suko12F,suko12D}.

\subsection{Results of the paper}

This paper is the first in the series of works \cite{suko12E,suko12F,suko12D,suko12B,suko12C} directed at development of polynomial time methods of computing in black box groups without seeking help from any kind of oracles.

As we have already mentioned, for sake of compactness of exposition in this paper we do not make a notational distinction between a black box and the group encrypted by it. However, in view of the use of results of this paper in subsequent work we carefully underly this distinction in the statements of results.

As the first step, we find unipotent elements in black box groups of Lie type of small odd characteristic.

\begin{theorem}\label{cons:uni}
Let $X$ be a black box group encrypting a quasi-simple group of Lie type of odd characteristic $p$ over a field of size $q=p^k>3$. If\/ $p\neq 5$ or $7$, then there exists a Monte-Carlo algorithm which constructs a string representing a unipotent element. This algorithm works in time polynomial in the Lie rank $n$ of\/ $X$ and\/ $\log q$, and is quadratic in $p$.

The same result holds if\/ $p=5$ or\/ $7$ and $k$ has a small divisor $l$, with the algorithm running in time polynomial in $n$ and\/ $\log q$, and quadratic in $p^l$.
\end{theorem}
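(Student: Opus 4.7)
The plan is a two-stage reduction: first, reduce the Lie rank of $X$ from $n$ down to $1$, and then reduce the field size inside the resulting rank-one group. For the rank reduction I use the standard fact that in a classical group of Lie rank $n$, involutions occur with density $\Omega(1/n)$ and their centralizers are, up to central isogeny, direct products of two smaller classical groups. Iterating the involution-centralizer construction, together with a suitable projection onto one of the two factors at each step, produces in time polynomial in $n$ and $\log q$ a black box subgroup $Y \hookrightarrow X$ encrypting a rank-one group of the form $\psl_2(q_0)$ with $q_0$ equal to $q$ or a small extension thereof. This subgroup ladder is standard in the algorithmic literature on black box classical groups.

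The nontrivial step is to produce a unipotent element of $Y \cong \psl_2(q_0)$ in time polynomial in $\log q$ and quadratic in $p$. Direct random search is ruled out because unipotents have density $\Theta(1/q_0)$ in $\psl_2(q_0)$, exponentially small in the input length. My plan is instead to cut down further to a subfield subgroup $\psl_2(p^l) \subseteq \psl_2(q_0)$ with $l$ as small as possible. Starting from an involution $i \in Y$ and a conjugate $i^g$ by a random $g \in Y$, the global exponent $E$ supplied by Axiom \textbf{BB4} allows us to raise products and commutators to $p'$-parts of numbers of the form $p^l - 1$, thereby extracting the $\GF_{p^l}$-rational parts of torus elements; a short iteration isolates two involutions that together generate $\psl_2(p^l)$. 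Inside that subgroup, a random element is unipotent with probability $\Omega(1/p^l)$, so $O(p^l)$ samples, plus a single $p'$-power extraction per element, locate a unipotent.

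The main obstacle, and the source of the exceptional hypothesis on $p$, is certifying that the subfield subgroup so constructed is genuinely $\psl_2(p^l)$ rather than some intermediate $\psl_2(p^m)$ with $l \mid m \mid k$. This certification rests on arithmetic identities among torus exponents that can degenerate in small groups; the exceptional isomorphisms $\psl_2(5) \cong \Alt_5$ and $\psl_2(7) \cong \psl_3(2)$ cause precisely such degeneracies, which is why for $p = 5, 7$ the construction must use $l > 1$ and the timing bound becomes quadratic in $p^l$ instead of $p$.
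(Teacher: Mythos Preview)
Your high-level architecture --- reduce to a rank-one subgroup, then to a subfield subgroup $\psl_2(p^l)$, then random-search for unipotents --- is exactly the paper's strategy. The rank reduction is handled in the paper by quoting a specific prior result that constructs a long root $\sl_2(q)$-subgroup directly (and treats $^2{\rm G}_2(q)$ via $C_G(i)' \cong \psl_2(q^2)$); your iterated centraliser-and-projection description is plausible but less concrete.

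The field-reduction step, however, contains a genuine gap. You write that the construction ``isolates two involutions that together generate $\psl_2(p^l)$''. This is impossible: any two involutions in $\psl_2(q)$ generate a dihedral group, never $\psl_2(p^l)$ for $p^l > 3$. Powering torus elements into the $(p^l-1)$-part certainly produces elements of a subfield torus, but those elements, together with involutions that invert them, still only give you dihedral groups. What is missing is a mechanism to escape the dihedral world. The paper supplies one: from the involution $s$ in a subfield torus $\langle t\rangle$ and an involution $r$ inverting $t$, it builds an element $x$ of order $3$ normalising the Klein four-group $\langle s, r\rangle$ (this $x$ is manufactured by a double square-root trick on products of conjugate involutions, Lemma~\ref{elt:3}). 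The key Lemma~\ref{the:killer} then shows $\langle t, x\rangle \cong \psl_2(p)$, essentially because $\langle s,r,x\rangle \cong \Alt_4$ already sits inside a unique $\psl_2(p)$ and the torus element $t$ of order $(p\pm 1)/2$ is forced to lie there too.

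Your diagnosis of the $p=5,7$ exception is also off. The exceptional isomorphisms are not the issue; the problem is purely that $(p\pm 1)/2 \in \{2,4\}$ is too small for the $\Alt_4$ argument: when $p=5$ the torus element $t$ coincides with the involution $s$, and when $p=7$ one gets $\langle t, x\rangle = \Sym_4$ rather than $\psl_2(7)$. This is why the workaround is to take $t$ of order $(p^l\pm 1)/2$ for some small $l>1$, landing in $\psl_2(p^l)$ instead.
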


Then we extend this result to present an algorithm that constructs the Steinberg generators of the classical groups. The groups $\ppsl_2(q)$ can be viewed as the starting point of recursion and we first present an algorithm for $\ppsl_2(q)$.

We need to recall the notion of Steinberg generators of $\ppsl_2(q)$ as introduced by Steinberg \cite[Theorem 8]{steinberg1968}.

Let $G=\sl_2(q)$. Then, for $t \in \GF(q)$, set
\begin{eqnarray}\label{steinberg:sl2}
u(t)= \left[
\begin{array}{cc}
1  & t  \\
0   &  1 \\
\end{array}
\right],
\,
v(t)= \left[
\begin{array}{cc}
1  & 0  \\
t   &  1 \\
\end{array}
\right],
\,
h(t)= \left[
\begin{array}{cc}
t  & 0  \\
0   &  t^{-1} \\
\end{array}
\right],
\,
n(t)= \left[
\begin{array}{cc}
0  & t  \\
-t^{-1}   &  0 \\
\end{array}
\right]
\end{eqnarray}
where $t\neq 0$ for $h(t)$ and $n(t)$.  It is straightforward to check that
\begin{eqnarray}\label{opp:uni}
u(t)^{n(s)}=v(-s^{-2}t), \,  u(1)^{h(t)}=u(-t^2) \mbox{ and } n(1)^{h(t)}=n(t^2).
\end{eqnarray}
Moreover,
\begin{eqnarray}\label{tori:weyl}
n(t)=u(t)v(-t^{-1})u(t) \mbox{ and } h(t)=n(t)n(-1).
\end{eqnarray}
It is well-known that  \[G=\langle u(t), v(t) \mid t\in \GF(q) \rangle,\] see, for example, \cite[Lemma 6.1.1]{carter1972}. Therefore, by (\ref{opp:uni}) and (\ref{tori:weyl}), \[G= \langle u(1),h(t),n(1)\mid t \in \GF(q)^*\rangle;\]
notice that actually $G$ is generated by three elements
\[G= \langle u(1),h(t),n(1)\rangle\]
where we can take for $t$ an arbitrary primitive element of the field\/ $\mathbb{F}_q$.

We prove the following.

\begin{theorem}\label{cons:psl2B}
Let $X$ be a black box group encrypting $\ppsl_2(q)$, where $\qpone$ and $q=p^k$ for some $k\geqslant 1$. Then there is a Monte-Carlo algorithm which constructs in $X$ strings $u$, $h$, $n$   such that there exists an isomorphism\/ $$\Phi: X \longrightarrow \ppsl_2(q)$$ with
\[
\Phi(u) = \begin{bmatrix} 1&1\\ 0&1\end{bmatrix}, \Phi(h) =  \begin{bmatrix} t&0\\ 0&t^{-1}\end{bmatrix}, \Phi(n) = \begin{bmatrix} 0&1\\ -1&0\end{bmatrix},
\]
where $t$ is some primitive element of the field\/ $\mathbb{F}_q$.

The running time of the algorithm is quadratic in $p$ and polynomial in $\log q$.
\end{theorem}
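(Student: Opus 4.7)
The plan is to bootstrap from a single unipotent element, produced by Theorem~\ref{cons:uni}, to the full Steinberg triple $(u,h,n)$ by random sampling and short-word constructions, keeping all further work polynomial in $\log q$ so that only Theorem~\ref{cons:uni} contributes the factor quadratic in $p$. First I would invoke Theorem~\ref{cons:uni} to produce a non-trivial unipotent $u \in X$, designated as our $u(1)$; this already pins down the target isomorphism $\Phi$ up to the choice of $h$ and $n$. I would then sample a random $g \in X$ and set $v := u^g$: since the Borel subgroup $B = N_X(\langle u \rangle)$ has index $q+1$ in $\ppsl_2(q)$, with probability $1-O(1/q)$ the conjugate $v$ lies outside $B$, so that $u$ and $v$ sit in the unipotent radicals of opposite Borel subgroups relative to some split torus $T$, and $\langle u,v\rangle = X$.

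Given such a pair, I would form a short word $c$ in $u$ and $v$ whose image under $\Phi$ is a regular semisimple element (for instance $c = uv$, whose matrix trace is $2$ plus a nonzero parameter), and then use the global exponent axiom BB4 together with the known factorisation of $|\ppsl_2(q)|$ to extract the $p'$-part $c_s$ of $c$, which is a torus element. Iterating over independent random $g$ until $c_s$ has full order $(q-1)/\gcd(2,q-1)$ yields an $h$ corresponding to a primitive element of $\mathbb{F}_q$; the density of such elements is $\varphi(q-1)/(q-1) = \Omega(1/\log\log q)$, so only polylogarithmically many trials are required. The Weyl element $n$ is then extracted from $N_X(\langle h\rangle) \setminus \langle h\rangle$: because $\qpone$, the split torus already contains an involution, so $\ppsl_2(q)$ has a single conjugacy class of involutions, and any involution $n_0$ inverting $h$ can be corrected to $n := h^a n_0$ so that the identities $n = u\cdot v(-1) \cdot u$ and $u^n = v(-1)$ of~(\ref{opp:uni})--(\ref{tori:weyl}) hold for a suitable choice of $a$.

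The main obstacle is the final adjustment step: the unknown parameter by which a random conjugate $u^g$ differs from the target $v(-1)$ prevents the Weyl element from being read off as a short word in $u$ and $u^g$ alone, and the $T$-correction of $n_0$ presupposes that $h$ is already available, which is what dictates the order of operations above. Once the triple is assembled, correctness is certified by verifying~(\ref{opp:uni}) and~(\ref{tori:weyl}) directly inside $X$ using a bounded number of black-box operations; together with Dickson's classification, these relations imply that $\langle u,h,n\rangle = X$ and that the map $\Phi$ determined by $u\mapsto u(1)$, $h\mapsto h(t)$, $n\mapsto n(1)$ is an isomorphism. Each post-unipotent step uses $O(\log q)$ oracle calls per trial and only $O(\operatorname{poly}(\log q))$ trials, so the total running time is $O(p^{2}\cdot\operatorname{poly}(\log q))$, as claimed.
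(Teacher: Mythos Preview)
Your proposal has a genuine alignment gap between the unipotent element $u$ and the torus element $h$. For the conclusion to hold, $\Phi(h)$ must be diagonal while $\Phi(u)$ is strictly upper triangular, which forces $h$ to normalise the root group $U$ containing $u$. But your $h$ is built from the $p'$-part of $c = u\cdot u^g$, and this element does \emph{not} lie in any torus normalising $U$. Concretely, choose coordinates so that $\Phi(u)=u(1)$ and $\Phi(u^g)=v(s)$ for the unknown parameter $s\in\GF_q^*$; then $\Phi(c)=\left(\begin{smallmatrix}1+s & 1\\ s & 1\end{smallmatrix}\right)$, which for $s\ne 0$ is never diagonal, and its semisimple part generates a maximal torus distinct from the diagonal one. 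Hence $u^h\notin U$, no isomorphism $\Phi$ can carry your triple to the target matrices, and the proposed verification of~(\ref{opp:uni})--(\ref{tori:weyl}) will simply fail rather than certify anything. Repairing this after the fact amounts to conjugating one split torus onto another, or equivalently producing a new unipotent aligned with $\langle h\rangle$; neither step is obviously cheaper than the original problem.

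The paper avoids this by reversing the order of construction. One first produces an involution $i$ and computes $C_X(i)=T\rtimes\langle w\rangle$, thereby obtaining the split torus $T$ and the Weyl element $w$ \emph{before} any unipotent is sought. One then builds a subfield subgroup $H\cong\ppsl_2(p)$ (or $\ppsl_2(p^2)$ when $\pmone$) containing $i$ via the $\Alt_4$ construction of Lemmas~\ref{the:killer} and~\ref{elt:3}, and searches inside $H$ for a unipotent of the special shape $u=i\cdot i^g$; this search is where the cost quadratic in $p$ enters, via Lemma~\ref{uni:prop:sl2}. The payoff is Lemma~\ref{uni:sl2}: because $u^i=u^{-1}$, the torus $T\ni i$ automatically normalises $U$, so the triple $(u,T,w)$ is aligned from the outset and no post-hoc correction is needed.
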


Theorem~\ref{cons:psl2B} deserves some discussion and comparison with the $\sl_2(q)$-oracle as described in \cite[Section~3]{brooksbank08.885}.

Notice that $\Phi(u)$, $\Phi(h)$, and $\Phi(n)$ are some Steinberg generators of $\ppsl_2(q)$. However, not being oracles  we do not have an efficient procedure for computing  isomorphism $\Phi$, but we exhibit its inverse $\Phi^{-1}$ in our next paper \cite{suko12F}.

Still, Theorem  \ref{cons:psl2B} provides sufficient structural information about $X$ to facilitate solution of a wide range of natural problems about $\ppsl_2(q)$ and other black box groups of Lie type and odd characteristic; a detailed discussion of applications of Theorems~\ref{cons:psl2B} and its easy corollary, Theorem \ref{cons:psl2} below, can be found in  our next paper \cite{suko12B}.

\begin{theorem}\label{cons:psl2}
Let $X$ be a black box group encrypting the group $G\cong \ppsl_2(q)$, where $\qpone$ and $q=p^k$ for some $k\geqslant 1$. Then there is a Monte-Carlo algorithm which constructs a  triple $(U,T,w)$ in $X$ where $U$ is (a black box for) a root subgroup in $G$, $T$ is (a black box for) a maximal torus in $G$ normalizing $U$ and $w$ is a representative in $N_X(T)$  of a Weyl group element in $G$ which conjugates $U$ to its opposite root subgroup. The running time of the algorithm is quadratic in $p$ and polynomial in $\log q$.
\end{theorem}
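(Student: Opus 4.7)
The plan is to derive the triple $(U, T, w)$ directly from the Steinberg generators produced by Theorem~\ref{cons:psl2B}, so essentially no fresh algorithmic work is required beyond invoking that theorem.

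First I would apply Theorem~\ref{cons:psl2B} to obtain strings $u, h, n \in X$ and an abstract isomorphism $\Phi\colon X \to \ppsl_2(q)$ sending them to the standard Steinberg generators $u(1), h(t), n(1)$ for some primitive $t \in \mathbb{F}_q$. Since $\Phi(h)$ generates the cyclic diagonal maximal torus of $\ppsl_2(q)$, I would take $T := \langle h \rangle$ as a black box for that torus. Because $n(1)$ represents the nontrivial Weyl element, $n$ normalises $T$, and I would set $w := n$; by~(\ref{opp:uni}) it conjugates $u$ to an element of the opposite root subgroup.

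Next I would construct a black box for the root subgroup $U$. I would form the conjugates $u_i := u^{h^i}$ for $i = 0, 1, \dots, k-1$, using repeated squaring to compute $h^i$ in time polynomial in $\log q$, and set $U := \langle u_0, \dots, u_{k-1} \rangle$. By~(\ref{opp:uni}) the element $u_i$ maps under $\Phi$ to $u((-t^2)^i)$, so via the identification of the upper-triangular root subgroup with $(\mathbb{F}_q,+)$ the generators $u_0, \dots, u_{k-1}$ span the $\mathbb{F}_p$-subspace $\mathbb{F}_p[-t^2] = \mathbb{F}_p(t^2)$. The key field-theoretic observation is that $\mathbb{F}_p(t^2) = \mathbb{F}_q$: if $\mathbb{F}_p(t^2) \subseteq \mathbb{F}_{q_0}$ for some proper subfield, then the order $(q-1)/2$ of $t^2$ would divide $q_0 - 1$, but $q_0 \leq q/p$ and $q/p - 1 < (q-1)/2$ for odd $p$, a contradiction. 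Hence $u_0, \dots, u_{k-1}$ form an $\mathbb{F}_p$-basis of $U$; uniformly random elements of $U$ are produced as $\prod_i u_i^{c_i}$ with independent $c_i \in \{0, \dots, p-1\}$ (giving BB1), while multiplication, inversion and equality (BB2, BB3) are inherited from $X$.

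The hard algorithmic work has been entirely absorbed into Theorem~\ref{cons:psl2B}; the only non-trivial residual point is the short field-theoretic verification above, certifying that the $k$ generators of $U$ are $\mathbb{F}_p$-independent so that the resulting black box $U$ is genuinely isomorphic to $(\mathbb{F}_q,+)$. The overall running time is dominated by the cost of Theorem~\ref{cons:psl2B} and is therefore quadratic in $p$ and polynomial in $\log q$.
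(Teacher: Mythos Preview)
Your derivation is correct and matches the paper's stated intent: the paper explicitly calls Theorem~\ref{cons:psl2} an ``easy corollary'' of Theorem~\ref{cons:psl2B}, and your reduction does exactly that. In the paper itself, however, both theorems are established simultaneously by Algorithm~\ref{algorithm:psl2} rather than one being deduced from the other after the fact; the torus $T$ and the Weyl element $w$ are read off directly from $C_G(i)$ in Step~2, and the root subgroup $U$ is produced in Step~5 by invoking Lemma~\ref{uni:gen:sl2}, which identifies $U$ as the derived subgroup of the Borel $\langle u,t\rangle$.

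The only genuine point of divergence is your construction of $U$. The paper appeals to Lemma~\ref{uni:gen:sl2} (normality of $U$ in $\langle u,t\rangle$) and leaves the extraction of explicit generators implicit, whereas you write down the $k$ conjugates $u^{h^i}$ and give a short field-theoretic argument that $\mathbb{F}_p(t^2)=\mathbb{F}_q$ to certify they form an $\mathbb{F}_p$-basis. Your version has the advantage of making the black-box structure of $U$ completely explicit, including uniform sampling via random $\mathbb{F}_p$-linear combinations; the paper's version is terser but would, if unpacked, amount to essentially the same list of torus-conjugates of $u$.
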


The formulation of Theorem \ref{cons:psl2} reflects another aspect of our approach to black box groups: we prefer to manipulate with subgroups (defined in $X$ by their own  smaller black ``subboxes'') rather than with individual elements.

In our next result, we expand Theorem~\ref{cons:psl2} to construction of Steinberg generators in the Curtis-Tits configurations of classical groups (for discussion of the latter, see Sections~\ref{subsec:CTC-single-bond} and \ref{subsec:CTC-double-bond}). We prove the following.

\begin{theorem}\label{cons:classical}
Let $X$ be a black box classical group encrypting one of the groups $\ppsl_{n+1}(q)$, $\ppsp_{2n}(q)$, $\om_{2n+1}(q)$ or\/ $\ppom_{2n}^+(q)$, where $\qpone$ and $q>5$. Then there is an algorithm which constructs:
\begin{itemize}
\item black boxes for an extended Curtis-Tits configuration $\{K_0, K_1, \ldots, K_n\}$ of\/ $X$;
    \item black boxes for root subgroups $U_\ell < K_\ell$;
    \item a black box for a maximal torus $T$ where $T<N_X(U_\ell)$;
    \item Weyl group elements $w_\ell \in K_\ell$, where $U_\ell^{w_\ell}$ is the opposite root subgroup of $U_\ell$ in $K_\ell$  for all $\ell=0,1,\ldots,n$.
\end{itemize}
The running time of the algorithm is quadratic in the characteristic $p$ of the underlying field, and is polynomial in the Lie rank $n$ of $X$ and $\log q$.
\end{theorem}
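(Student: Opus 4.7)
The plan is to proceed by recursion on the Lie rank $n$, constructing the (extended) Curtis--Tits configuration one node at a time. The base case $n=1$ is essentially Theorem~\ref{cons:psl2}: for $\ppsl_2(q)$, we already have $(U_0, T, w_0)$ with $K_0 := X$ (and the ``extended'' extra node is recovered by conjugation by the Weyl element $w_0$). For $n \geq 2$, the strategy has three stages: (i) manufacture one rank-one Levi $K_n$ together with its Steinberg triple, (ii) pass to the centralizer of an involution in $T_n$ to obtain a black box $X'$ encrypting a classical group of Lie rank $n-1$ of the appropriate type, and (iii) apply the theorem recursively to $X'$ and glue the output to $K_n$ along the edge of the (extended) Dynkin diagram that joins them.

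For step (i), Theorem~\ref{cons:uni} supplies a unipotent element $u \in X$; by taking a suitable power/commutator to isolate a long root element and then applying Theorem~\ref{cons:psl2B} to the $\ppsl_2(q)$-subgroup $K_n := \langle u, u^g \rangle$ for an appropriate conjugate, we obtain strings $(u_n, h_n, n_n)$ realising the Steinberg generators, hence a root subgroup $U_n$, a torus $T_n$, and a Weyl element $w_n$ in $K_n$. For step (ii), since $\qpone$ and $q > 5$ we have $4 \mid q-1$, so $T_n$ contains a non-trivial involution $i$; applying Bray's commutator trick (a random $g \in X$ with $[g,i]$ of order $2m+1$ contributes $[g,i]^m \cdot g \in C_X(i)$, and analogously for even-order commutators) yields, with high probability after polynomially many samples, generators of $C_X(i)$. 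For a correctly chosen involution $i$ the derived subgroup of $C_X(i)$ is (a central product involving) a classical group $X'$ of rank $n-1$; recursive application of the theorem to $X'$ yields black boxes $K_0', \ldots, K_{n-2}'$ with compatible root subgroups, a torus $T'$, and Weyl elements.

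For step (iii), the Dynkin diagram of the ambient classical type determines a unique $K_{n-1}'$ that is adjacent to $K_n$: the generated subgroup $\langle K_{n-1}', K_n \rangle$ must be isomorphic to the rank-2 Levi of the corresponding bond ($\ppsl_3(q)$ for a single bond, $\ppsp_4(q)$ or $\om_5(q)$ for a double bond; see Sections~\ref{subsec:CTC-single-bond} and \ref{subsec:CTC-double-bond}), while $K_n$ commutes with the remaining $K_\ell'$. We verify the correct bonding by computing orders of small products of root elements, and we set $K_\ell := K_\ell'$ for $\ell \leq n-2$. The common torus $T$ is obtained by adjusting $T_n T'$: elements of $T'$ normalize $U_0, \ldots, U_{n-2}$ by the recursive hypothesis, and a short centralizing computation (inside $\langle K_{n-1}', K_n \rangle$, which is of bounded rank) produces the diagonal extension that also normalizes $U_{n-1}$ and $U_n$. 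The extra node $K_0$ of the extended configuration is then obtained by conjugating the rank-one Levi at the end of the Dynkin diagram by the long Weyl element assembled from the $w_\ell$.

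The main obstacle is step (ii): identifying, among the possibly two quasi-simple factors of the derived subgroup of $C_X(i)$, the factor of the correct Lie type and rank $n-1$, and verifying the Curtis--Tits relations between the recursively produced $K_\ell'$ and the already constructed $K_n$. This requires a case analysis by Lie type (linear, symplectic, odd/even orthogonal) and careful choice of the involution $i$ in $T_n$, since for orthogonal types there are several conjugacy classes of involutions with different centralizer structure. Everything else is bookkeeping: the recursion has depth $n$, each level invokes Theorems~\ref{cons:uni} and~\ref{cons:psl2B} a bounded number of times and runs Bray's procedure on polynomially many samples, so the total cost remains polynomial in $n$ and $\log q$, and quadratic in $p$, as required.
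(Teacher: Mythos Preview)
Your approach is genuinely different from the paper's, and while the overall shape is reasonable, there is a concrete gap in the recursion for type $A_n$.

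The paper does not recurse on the Lie rank at all. It invokes the algorithm of \cite{suko03} once to obtain the entire extended Curtis--Tits configuration $\{K_0,\dots,K_n\}$ up front (Step~1 of Algorithm~\ref{the:algorithm}). It then defines the tori intrinsically by formulas such as $T_\ell=C_{K_\ell}(i_{\ell-1})$, where $i_{\ell-1}$ is the central involution of the adjacent node; Lemmas~\ref{nor:tor:an}--\ref{nor:tor:dn1} show these automatically commute and generate a maximal split torus, so no alignment step between recursively produced tori is needed. The Weyl elements $w_\ell$ are read off inside each $K_\ell$ (Remark~\ref{rem:weyl}, Lemma~\ref{lem:weyl}). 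The key device for the root subgroups is Corollary~\ref{cor:root:subs}: one runs Algorithm~\ref{algorithm:psl2} \emph{only in $K_1$}, conjugates once by an element of $K_1$ (Lemma~\ref{lem:align}) to align the resulting torus with $T_1$, and then propagates the root subgroup to every other node by Weyl conjugation, $U_i=U_{i-1}^{w_{i-1}w_i}$. For types $B_n$, $C_n$, $D_n$ the end nodes not lying in the $A$-type subsystem are handled by at most two further invocations of Algorithm~\ref{algorithm:psl2}.

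The gap in your plan is in step~(ii). The only involution in $T_n\leqslant K_n\cong\sl_2(q)$ is the central involution $i_n$ of $K_n$. In type $A_n$ (i.e.\ $G=\ppsl_{n+1}(q)$) one has $C_G(i_n)'$ isogenous to $\sl_{n-1}(q)\times K_n$, so the factor $X'$ on which you recurse has Lie rank $n-2$, not $n-1$. The recursive call therefore returns at best $K_1,\dots,K_{n-2}$, and the bridging node $K_{n-1}$ is simply absent; your step~(iii) assumes it is already there. Since $[K_{n-2},K_n]=1$, you cannot locate $K_{n-1}$ inside $\langle K_{n-2},K_n\rangle$ either. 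Supplying this missing node is precisely the non-trivial content of \cite{suko03}, which the paper cites rather than reproves. A secondary issue is that your ``short centralizing computation'' to fuse $T'$ with $T_n$ into a single split torus is doing the work of Section~\ref{sec:torus}; the paper's definition $T_\ell=C_{K_\ell}(i_{\ell\pm 1})$ makes compatibility automatic and avoids any such gluing.
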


The two families of classical groups,  $\ppsu_n(q)$ and $\ppom_{2n}^-(q)$, are not covered by Theorem~\ref{cons:classical}. They are twisted Chevalley groups whose Curtis-Tits presentations are more complicated than these of Chevalley groups, see \cite[Section 2.4]{gorenstein1998}, and work within these groups requires additional technical tools. However, we know how to develop algorithms for $\ppsu_n(q)$ and $\ppom_{2n}^-(q)$ similar to those described in this paper, they  will be published elsewhere. The corresponding algorithms for exceptional groups will be presented in our next paper \cite{suko12C}.

Theorem~\ref{cons:classical} will be used in the subsequent paper \cite{suko12E} to prove a more precise result:
\begin{theorem}\label{recovery-classical}
Let $X$ be a black box classical group encrypting one of the groups $G\simeq \ppsl_{n+1}(q)$, $\ppsp_{2n}(q)$, $\om_{2n+1}(q)$ or\/ $\ppom_{2n}^+(q)$, where $\qpone$ and $q>5$. Then there is a Monte-Carlo algorithm which constructs a polynomial time (in $p$ and $\log q$) morphism
\[
\Phi: G \to X.
\]
The running time of the algorithm is polynomial in the characteristic $p$ of the underlying field,  in the Lie rank $n$ of $X$,  and and in $\log q$.
\end{theorem}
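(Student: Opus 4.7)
The plan is to combine the Curtis--Tits data supplied by Theorem~\ref{cons:classical} with the Steinberg presentation of the target Chevalley group $G$, so as to define $\Phi$ first on Steinberg generators and then extend it to arbitrary $g \in G$ via a Bruhat normal form. I would treat Theorems~\ref{cons:psl2B} and~\ref{cons:classical} (together with the inverse morphism for $\ppsl_2(q)$ constructed in~\cite{suko12F}) as subroutines, and concentrate on three tasks: (i) extending the fundamental rank-one data to a full Steinberg system inside $X$; (ii) parametrising the resulting root subgroups by $\GF(q)$; and (iii) implementing the word-to-string procedure in polynomial time.

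For (i), recall that every positive root $\alpha$ of $G$ has the form $w(\alpha_\ell)$ for some $w$ in the Weyl group $W = \la s_0, \ldots, s_n \ra$ and some fundamental root $\alpha_\ell$. Taking $\tilde w \in X$ to be the corresponding product of the Weyl representatives $w_j$ from Theorem~\ref{cons:classical}, the subgroup $U_\alpha = U_\ell^{\tilde w}$ is a root subgroup of $X$ attached to $\alpha$. The Chevalley commutator formula applied inside each rank-two subgroup $\la K_\ell, K_m \ra$ (for adjacent $\ell,m$) then ensures compatibility of these choices and determines the structure constants up to signs, which are fixed by the chosen Weyl representatives. This yields black boxes for $U_\alpha$ for every root $\alpha$, together with the torus $T$ and monomial elements $n_\alpha$.

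For (ii), each $U_\alpha$ must be parametrised by $\GF(q)$. Inside each fundamental $K_\ell \simeq \ppsl_2(q)$, the strings $u, h, n$ delivered by Theorem~\ref{cons:psl2B} together with the identity $u(1)^{h(t)} = u(-t^2)$ of~(\ref{opp:uni}) produce $u_\ell(t)$ for all field squares, and the Weyl element handles the non-square case; more globally, the inverse morphism $\ppsl_2(q) \to K_\ell$ of~\cite{suko12F} computes $u_\ell(t)$ for arbitrary $t \in \GF(q)$ in time $\mathrm{poly}(p, \log q)$. The parametrisation is transported to non-fundamental $U_\alpha$ by conjugation by the appropriate $\tilde w$, and made coherent across root subgroups via the Chevalley commutator relations already used in~(i). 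At this stage the strings $u_\alpha(t)$, $h_\alpha(t)$, $n_\alpha$ form a complete set of Steinberg generators realised in~$X$.

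For (iii), given $g \in G$ I would compute its Bruhat normal form $g = u_1 h w u_2$ in $G$ (say, by Gauss elimination in the defining matrix representation), then factor each piece as an explicit word in the generators $u_\alpha(t)$, $h_\alpha(t)$, $n_\alpha$ of $G$; transporting this word letter by letter to $X$ via the generators built in~(i) and~(ii) yields $\Phi(g)$. The main obstacle will be the \emph{uniformity and cost} of this evaluation: $\Phi$ must itself be computable in time polynomial in $p$, $\log q$ and $n$, so the Bruhat word length has to be bounded by a polynomial in $n$ (which follows from the height ordering of positive roots), and each Steinberg letter must be produceable in $X$ at cost $O(p^2 \cdot \mathrm{poly}(\log q, n))$. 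That per-letter bound is inherited from the quadratic-in-$p$ parametrisations provided by Theorems~\ref{cons:psl2B} and~\ref{cons:classical}, and verifying it uniformly across all root subgroups and all values of $t \in \GF(q)$ is where the bulk of the technical work will lie.
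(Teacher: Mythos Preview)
The paper does not actually prove Theorem~\ref{recovery-classical}. It is stated as an announcement: the text immediately preceding it says ``Theorem~\ref{cons:classical} will be used in the subsequent paper \cite{suko12E} to prove a more precise result,'' and no proof is given in the present paper. So there is no ``paper's own proof'' against which your proposal can be compared line by line.

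That said, your outline is broadly consistent with what the paper signals. The paper makes clear that Theorem~\ref{cons:classical} (the Curtis--Tits configuration with root subgroups, torus and Weyl representatives) is the main structural input, and that the parametrisation of each $K_\ell \cong \ppsl_2(q)$ by field elements is provided by the inverse morphism of~\cite{suko12F}; you use both in exactly the way the paper suggests. Your steps~(i) and~(ii)---transporting fundamental root subgroups to all roots by Weyl conjugation, and pulling the $\GF(q)$-labelling through from the rank-one case---are the standard route, and your step~(iii) via Bruhat normal form is the natural way to turn this into an evaluable map $\Phi$.

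One point deserves more care than you indicate. In~(ii) you say the parametrisations are ``made coherent across root subgroups via the Chevalley commutator relations already used in~(i).'' This is the genuinely delicate part: the commutator relations only determine the structure constants up to a global choice of signs (an extraspecial system, in Carter's language), and the Weyl representatives $w_\ell$ produced by Theorem~\ref{cons:classical} are only determined up to elements of $T$, so the conjugates $U_\ell^{\tilde w}$ come with an unspecified scalar in their parametrisation. Fixing a consistent labelling $u_\alpha(t)$ across \emph{all} roots so that the Steinberg relations hold on the nose in $X$---not merely up to unknown field scalars---requires either working inside each rank-two subgroup $\la K_\ell, K_m\ra$ to pin down the signs, or invoking a presentation-theoretic argument (Curtis--Tits) to show that any coherent choice lifts. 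Your sketch gestures at this but does not explain how the normalisation is actually carried out in polynomial time; that is where the real work in~\cite{suko12E} presumably lies.
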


\subsection{Construction of $C_G(i)$ in black box groups}

Our algorithms are based on the construction of involutions and their centralizers in black box groups. In this subsection we summarize these constructions following \cite{borovik02.7}, see also \cite{bray00.241}.

Let $X$ be a black box group having an exponent $E=2^km$ with $m$ odd. To produce an involution from a random element in $X$, we need an element $x$ of even order. Then the last non-identity element in the sequence
$$1 \neq x^{m}, \, x^{m2}, \, x^{m2^2}, \, \ldots , x^{m2^{k-1}}, x^{m2^k}=1$$
is an involution and denoted by ${\rm i}(x)$. Note that the proportion of elements of even order in classical groups of odd characteristic is at least $1/4$ \cite{isaacs95.139}.

Let $i$ be an involution in $X$. Then, by \cite[Section 6]{borovik02.7}, there is a partial map $\zeta = \zeta_0 \sqcup \zeta_1$ defined by
\begin{eqnarray*}
\zeta: X & \longrightarrow &  C_X(i)\\
x & \mapsto & \left\{ \begin{array}{ll}
\zeta_1(x) = (ii^x)^{(m+1)/2}\cdot x^{-1} & \hbox{ if } o(ii^x) \hbox{ is odd}\\
\zeta_0(x) = {\rm i}(ii^x)  &  \hbox{ if } o(ii^x) \hbox{ is even.}
\end{array}\right.
\end{eqnarray*}

Here $o(x)$ is the order of the element $x\in X$. Notice that, with a given exponent $E$, we can construct $\zeta_0(x)$ and $\zeta_1(x)$ without knowing the exact order of $ii^x$.

The following theorem is the main tool in the construction of centralizers of involutions in black-box groups.

\begin{theorem} {\rm (\cite{borovik02.7})} \label{dist}
Let $X$ be a finite group and $i \in X$ be an involution. If the elements $x \in X$ are uniformly distributed and
independent in $X$, then
\begin{enumerate}
\item the elements $\zeta_1(x)$ are uniformly distributed and independent in $C_X(i)$ and
\item the elements $\zeta_0(x)$ form a normal subset of involutions in $C_X(i)$.
\end{enumerate}
\end{theorem}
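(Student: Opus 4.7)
The plan is to verify (1) and (2) by direct computation, leaning on the single structural fact that $i$ inverts $y := i\cdot i^x$, which is immediate from $iyi = i\cdot i\cdot i^x\cdot i = i^x i = y^{-1}$.

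For (1), I would first show that $\zeta_1(x)\in C_X(i)$. When $o(y)$ is odd it divides $m$, so $z := y^{(m+1)/2}$ satisfies $z^2 = y^{m+1} = y$ and is itself inverted by $i$ as a power of $y$. A short calculation using $iz = z^{-1}i$ and $z^2 = i\cdot x^{-1}ix$ then gives $i\cdot(zx^{-1}) = (zx^{-1})\cdot i$, so $\zeta_1(x)\in C_X(i)$. Uniformity will follow from a fibre count. Set $\mathcal{W} = \{w\in X : o(w)\text{ odd and } iwi = w^{-1}\}$ and $A = \{x\in X : o(i\cdot i^x)\text{ is odd}\}$. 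For any $c\in C_X(i)$, I would prove that $w\mapsto c^{-1}w$ is a bijection $\mathcal{W}\to\zeta_1^{-1}(c)$: substituting $x = c^{-1}w$ and using $cic^{-1} = i$ together with $iw^{-1}i = w$ collapses $i\cdot i^x$ to $w^2$, and $w^{m+1} = w$ then gives $\zeta_1(x) = wx^{-1} = c$; conversely, given $x\in\zeta_1^{-1}(c)$ the element $w := (i\cdot i^x)^{(m+1)/2}$ lies in $\mathcal{W}$ and satisfies $x = c^{-1}w$. Constancy of the fibre size yields uniformity of $\zeta_1(x)$ on $C_X(i)$ conditional on $x\in A$, and independence of the $\zeta_1(x_j)$ is automatic because $\zeta_1$ is a deterministic function.

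For (2), the element $\zeta_0(x) = {\rm i}(y)$ is by construction an involution lying in $\langle y\rangle$, hence inverted by $i$; but any involution inverted by $i$ is centralised by $i$, so $\zeta_0(x)\in C_X(i)$. For normality, fix $c\in C_X(i)$. Using $cic^{-1} = i$ one gets $cyc^{-1} = i\cdot i^{xc^{-1}}$, and since ${\rm i}(\cdot)$ commutes with conjugation and preserves the $2$-part of the order, $c\,\zeta_0(x)\,c^{-1} = \zeta_0(xc^{-1})$, which lies in the same set.

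The only really delicate point is the fibre argument in (1): I must check both that the map $\mathcal{W}\to\zeta_1^{-1}(c)$ is well-defined (every $w\in\mathcal{W}$ produces an $x\in A$ with $\zeta_1(x) = c$) and that it is surjective onto the fibre. The first uses $iw^{-1}i = w$ together with $w^m = 1$ (which holds since $o(w)$ is odd and divides the odd part of the exponent); the second uses that $(i\cdot i^x)^{(m+1)/2}$ automatically lies in the cyclic group $\langle i\cdot i^x\rangle$, which has odd order and is inverted by $i$, so it belongs to $\mathcal{W}$.
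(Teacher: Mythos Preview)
The paper does not prove this theorem; it is quoted from \cite{borovik02.7} and used as a black box. So there is no in-paper argument to compare against. That said, your proposal is correct and is essentially the standard proof one finds in the cited source: the fibre bijection $\mathcal{W}\to\zeta_1^{-1}(c)$, $w\mapsto c^{-1}w$, where $\mathcal{W}$ is the set of odd-order elements inverted by $i$, is exactly Bray's trick as reformulated by Borovik. Your verification that the two maps are mutual inverses is complete (the key identities $i\cdot i^{c^{-1}w}=w^2$ and $(w^2)^{(m+1)/2}=w$ are both checked), and the normality argument for (2) via $c\,\zeta_0(x)\,c^{-1}=\zeta_0(xc^{-1})$ is the right one.

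One small remark on exposition: in part (1) you should make explicit that $\mathcal{W}\ne\varnothing$ (it contains the identity), so that every fibre $\zeta_1^{-1}(c)$ is nonempty and $\zeta_1$ is genuinely surjective onto $C_X(i)$; otherwise the uniformity claim is vacuous. You have all the ingredients, but the surjectivity is worth stating outright.
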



By Theorem \ref{dist}, we shall use the map $\zeta_1$ to produce uniformly distributed random elements in $C_X(i)$. For an arbitrary involution $i \in X$ where $X$ is a finite simple classical group, the proportion of  elements of the form $ii^g$ which have odd order is bounded from below by $c/n$ where $c$ is an absolute constant and $n$ is the Lie rank of $X$ \cite{parker10.885}. For the classical involutions in classical groups, such a proportion is proved to be bounded from below by an absolute constant \cite[Theorem 8.1]{suko02}.


\subsection{GAP code and experiments}

All algorithms described in this paper have been implemented in GAP \cite{gap} and thoroughly tested. It is worth noting that our methodology of  building internal structure of a black box group block-by-block, in an organized and directed way, allows us to write (and write quickly!) clean, transparent, compact GAP codes; not surprisingly, de-bugging is also much easier than in the ``elementwise'' approach.

For the efficiency in practice we want to note that the algorithm in Theorem \ref{cons:psl2B} for the group $\sl_2(7^{30})$ constructs the unipotent, toral and Weyl group elements in around 20 seconds in 2008 model standard MacBook.

\subsection{Notation}

The notation is standard and mostly follows \cite{gorenstein1998}. In particular,

\bi
\item $\ppsl_n(q)$ denotes any group $\sl_n(q)/N$ where $N$ is a subgroup of the center of $\sl_n(q)$, with similar conventions for the remaining classical groups. In particular, $\ppsl_2(q)$ denotes one of the group $\psl_2(q)$ or $\sl_2(q)$;
\item $\frac{1}{(2,n)}\sl_n(q)$ denotes the factor groups of $\sl_n(q)$ by the subgroup of order $(2,n)$ from its center.
\item The groups $\om_{2n+1}(q)$, $q>3$, $n\geqslant 3$ are simple, so we drop (P) in the notation for these groups.
\ei

\section{Backgrounds}

\subsection{The Curtis-Tits Theorem}

The main identification theorem used in the classification of the finite simple groups is  so called the Curtis-Tits Theorem which shows that the essential relations in the Steinberg presentation are the ones involving Lie rank $2$ subgroups corresponding to fundamental roots in $\Pi$, that is, edges and non-edges in the Dynkin diagram.  

\begin{theorem}\label{th:27.3} {\rm (Curtis--Tits, \cite[Theorem 27.3]{gorenstein1994})} Let $G^*$ be a finite group of Lie type. Let $\Sigma$ be the root system of $G^*$ and $X_\alpha$ {\rm (}$\alpha\in\Sigma${\rm )} the corresponding
root subgroups. Let\/ $\Pi$ be a fundamental system in $\Sigma$ and for each $\alpha\in\Pi$ set
$$G^*_\alpha =\langle X_\alpha, X_{-\alpha}\rangle.$$ Assume that\/ $|\Pi|\geqslant 3$.

If now $G$ is any group generated
by subgroups $G_\alpha$ {\rm (}$\alpha\in\Pi${\rm )}, and if there are homomorphisms
\[
\phi_\alpha: G^*_\alpha \longrightarrow G_\alpha
\]
 and
 \[
\phi_{\alpha\beta}: \langle G^*_\alpha, G^*_\beta\rangle  \longrightarrow \langle G_\alpha, G_\beta\rangle
\]
 for all $\alpha,\beta\in\Pi$, which are coherent in the sense that $\phi_{\alpha\beta} = \phi_{\beta\alpha}$
 and
 \[
 \phi_{\alpha\beta}\mid_{G^*_\alpha} = \phi_\alpha
 \]
 for all $\alpha$ and $\beta$ in $\Pi$, then $G/Z(G)$ is a homomorphic image of\/ $G^*/Z(G^*)$.
\end{theorem}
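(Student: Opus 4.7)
The plan is to reduce the theorem to the Steinberg presentation of $G^*/Z(G^*)$: the quotient $G^*/Z(G^*)$ is presented by generators $x_\alpha(t)$, $\alpha \in \Sigma$, $t$ in the underlying field, subject to the additive relations $x_\alpha(t) x_\alpha(u) = x_\alpha(t+u)$ inside each root subgroup $X_\alpha$ and to the Chevalley commutator relations
\[
[x_\alpha(t), x_\beta(u)] = \prod_{\substack{i,j>0 \\ i\alpha + j\beta \in \Sigma}} x_{i\alpha+j\beta}(C_{ij,\alpha,\beta}\, t^i u^j), \quad \alpha + \beta \neq 0.
\]
The key structural feature is that every relation in this presentation involves at most two roots simultaneously, and the roots $i\alpha + j\beta$ appearing on the right lie in the rank-$2$ root subsystem $\Sigma \cap (\mathbb{Z}\alpha + \mathbb{Z}\beta)$.

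First I would extend the hypothesis (which concerns only $\alpha \in \Pi$) to all roots. For $\gamma \in \Sigma$, write $\gamma = w\alpha$ with $\alpha \in \Pi$ and $w$ a product of fundamental reflections $s_{\alpha_1} \cdots s_{\alpha_r}$. Each $s_{\alpha_i}$ is represented by a Weyl element $n_{\alpha_i} \in G^*_{\alpha_i}$, whose image in $G$ under $\phi_{\alpha_i}$ provides a lift $\tilde{w} \in G$. Setting $\phi_\gamma(x_\gamma(t)) = \phi_\alpha(x_\alpha(\pm t))^{\tilde w}$ gives a candidate definition of $\phi_\gamma$ on $X_\gamma$; independence of the expression of $\gamma$ as $w\alpha$ has to be argued from the rank-$2$ coherence hypothesis $\phi_{\alpha\beta}|_{G^*_\alpha} = \phi_\alpha$, which guarantees that the various Weyl conjugations agree when compared inside any $\langle G^*_\alpha, G^*_\beta\rangle$.

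The second step is to verify the Chevalley commutator relations in $G$ for every pair $\alpha,\beta \in \Sigma$. Conjugating both sides by a suitable Weyl element, one reduces to the situation where $\alpha,\beta$ lie in a rank-$2$ subsystem generated by a pair of \emph{fundamental} roots $\alpha', \beta' \in \Pi$. This is precisely where the hypothesis $|\Pi|\geq 3$ enters: one needs enough room to ``pivot'' through intermediate rank-$2$ subsystems, sharing a common simple root at each step, and to close up identities coming from different decompositions. Once inside $\langle G^*_{\alpha'}, G^*_{\beta'}\rangle$, the relation holds in $G$ because $\phi_{\alpha'\beta'}$ is assumed to be a homomorphism; the coherence conditions then propagate the relation outward.

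The hard part is this consistency verification. Concretely, one is building a homomorphism out of a free product amalgamated over rank-$1$ subgroups along the edges and non-edges of the Dynkin diagram, and the delicate point is that every rank-$3$ sub-diagram forces a triangle of compatibilities that must be checked simultaneously. The rank-$2$ analogue fails (as the exclusion of $|\Pi| = 2$ shows) because the ``triangle'' of relations collapses and the amalgam can have too many quotients. With $|\Pi| \geq 3$ in hand, the triangle relations in the Weyl group and in the associated unipotent groups, combined with the coherent $\phi_{\alpha\beta}$, force every Steinberg relation to hold modulo the center of the subgroup $\langle \phi_\alpha(G^*_\alpha) : \alpha \in \Pi\rangle$ of $G$. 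Passing to $G/Z(G)$ therefore yields a well-defined homomorphism $G^*/Z(G^*) \to G/Z(G)$ whose image contains every $G_\alpha Z(G)/Z(G)$ and hence, by the generation assumption on $G$, is all of $G/Z(G)$.
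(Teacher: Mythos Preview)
The paper does not prove this theorem at all: it is quoted verbatim as the Curtis--Tits theorem from \cite[Theorem~27.3]{gorenstein1994} and used as a black box. There is therefore no ``paper's own proof'' to compare your proposal against.

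As for the sketch itself, the overall strategy---reduce to the Steinberg presentation and exploit that every relation lives in a rank-$2$ subsystem---is the standard one, and is indeed how the theorem is proved in the literature. However, what you have written is an outline rather than a proof, and the step you flag as ``the hard part'' really is the entire content of the theorem. In particular: (i) your definition of $\phi_\gamma$ for a non-simple root $\gamma = w\alpha$ depends on the choice of reduced expression for $w$, and showing well-definedness requires checking compatibility across \emph{all} braid relations in the Weyl group, not just a single pair $\alpha',\beta'$; (ii) the claim that an arbitrary pair $\alpha,\beta \in \Sigma$ can be conjugated into a rank-$2$ subsystem spanned by two \emph{fundamental} roots is false in general---what one actually uses is that the rank-$2$ subsystem $\Sigma \cap (\mathbb{Z}\alpha + \mathbb{Z}\beta)$ is $W$-conjugate to a standard parabolic subsystem, and this requires $|\Pi|\geqslant 3$ in a more delicate way than ``having room to pivot''; (iii) your remark about the rank-$2$ failure is slightly misleading: the issue is not that the amalgam has ``too many quotients'' but that in rank $2$ the hypotheses already determine $G$ completely (there is only one pair $\alpha,\beta$), so the theorem becomes vacuous rather than false. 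None of this is fatal to the approach, but a genuine proof has to execute these verifications, and that is exactly what the cited reference does over many pages.
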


The system of subgroups $\{G_\alpha \mid \alpha\in\Pi\}$ which satisfies conditions of Theorem~\ref{th:27.3}  is usually called a \emph{Curtis-Tits system} of the groups $G$.

\subsection{A Curtis-Tits configuration, the single bonds case}
\label{subsec:CTC-single-bond}

In this series of papers \cite{suko03,suko12E,suko12D,suko12B,suko12C,suko02}, we move beyond identification of a simple black box group to creation of tools for computing within these groups. For that purpose, we will use a modified concept of a Curtis-Tits system, more suitable for pin-pointing the internal structure of the given black box group $X$.

\begin{figure}[htbp]
\begin{center}
\setlength{\unitlength}{.025cm}
\begin{picture}(250,250)(-60,40)
{\huge


\put(41,260){\small $K_0$}
\put(-20,221.5){\line(5,2){65}}
\put(117,221.5){\line(-5,2){65}}
\put(41,240){$\bullet$}
\put(-30,210){$\circ$}
\put(-30.2,190){\small $K_1$}
\put(-17,217.5){\line(1,0){20}}
\put(2,210){$\circ$}
\put(2,190){\small $K_2$}
\put(15,217.5){\line(1,0){10}}
\put(31,210){$\cdots$}
\put(73,217.5){\line(1,0){10}}
\put(82,210){$\circ$}
\put(82,190){\small $K_{n-1}$}
\put(95,217.5){\line(1,0){20}}
\put(114,210){$\circ$}
\put(114,190){\small $K_n$}

\put(-58.2,116.6){$\bullet$}
\put(-58.2,136.6){\small $K_0$}
\put(-57.8,63){$\circ$}
\put(-57.8,43){\small $K_1$}
\put(-47,73.5){\line(1,1){20}}
\put(-27.5,100.5){\line(-1,1){20}}
\put(-30,90){$\circ$}
\put(-33,70){\small $K_2$}
\put(-17,97.5){\line(1,0){21}}
\put(2,90){$\circ$}
\put(2,70){\small $K_{3}$}
\put(15,97.5){\line(1,0){10}}
\put(33,90){$\cdots$}
\put(73,97.5){\line(1,0){11}}
\put(82,90){$\circ$}
\put(75,70){\small $K_{n-3}$}
\put(95,97.5){\line(1,0){21}}
\put(114,90){$\circ$}
\put(110,70){\small $K_{n-2}$}
\put(125,100.5){\line(1,1){20}}
\put(145.5,73.5){\line(-1,1){20}}
\put(142.4,116.6){$\circ$}
\put(142.4,136){\small $K_{n-1}$}
\put(142.5,63){$\circ$}
\put(142.5,43){\small $K_n$}
}
\end{picture}
\caption{Extended Dynkin diagrams (and Curtis-Tits systems) of types $A_n$ and $D_n$}
\label{ean}\label{edn}
\end{center}
\end{figure}

\begin{figure}

\setlength{\unitlength}{.025cm}
\begin{center}
\begin{picture}(250,250)(0,0)
{\huge

\put(2,230){$\circ$}
\put(15,237.5){\line(1,0){20}}
\put(34,230){$\circ$}
\put(47,237.5){\line(1,0){20}}
\put(66,230){$\circ$}
\put(79,237.5){\line(1,0){20}}
\put(98,230){$\circ$}
\put(111,237.5){\line(1,0){20}}
\put(130,230){$\circ$}
\put(73,232){\line(0,-1){20}}
\put(66,200){$\circ$}
\put(73,202){\line(0,-1){20}}
\put(66,170){$\bullet$}

\put(2,130){$\bullet$}
\put(12,137.5){\line(1,0){20}}
\put(32,130){$\circ$}
\put(45,137.5){\line(1,0){20}}
\put(64,130){$\circ$}
\put(77,137.5){\line(1,0){20}}
\put(96,130){$\circ$}
\put(109,137.5){\line(1,0){20}}
\put(128,130){$\circ$}
\put(141,137.5){\line(1,0){20}}
\put(159,130){$\circ$}
\put(171,137.5){\line(1,0){20}}
\put(189,130){$\circ$}
\put(103,132){\line(0,-1){20}}
\put(96,100){$\circ$}

\put(2,50){$\circ$}
\put(15,57.5){\line(1,0){20}}
\put(33,50){$\circ$}
\put(45,57.5){\line(1,0){20}}
\put(63,50){$\circ$}
\put(75,57.5){\line(1,0){20}}
\put(93,50){$\circ$}
\put(105,57.5){\line(1,0){20}}
\put(123,50){$\circ$}
\put(135,57.5){\line(1,0){20}}
\put(153,50){$\circ$}
\put(165,57.5){\line(1,0){20}}
\put(183,50){$\circ$}
\put(195,57.5){\line(1,0){20}}
\put(213,50){$\bullet$}
\put(70,52){\line(0,-1){20}}
\put(63,20){$\circ$}
}
\end{picture}
\end{center}

\caption{Extended Dynkin diagrams (and Curtis-Tits systems) of types $E_6$, $E_7$ and $E_8$}
\label{fig:en}

\end{figure}
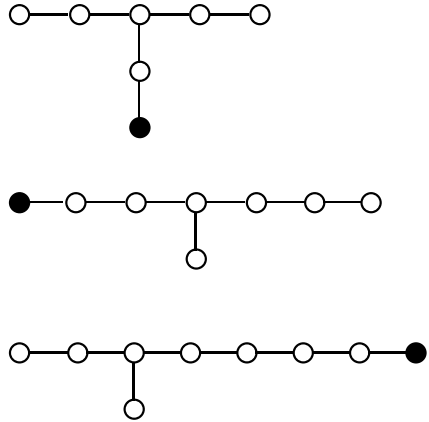

Let $\Phi$ be an irreducible root system of rank at least $3$ with fundamental system $\Pi$ and with Dynkin diagram $\Delta$ of one of the types $A_n$ for $n \geqslant 2$, $D_n$ for $n\geqslant 3$, $E_6$, $E_7$, or $E_8$. Let\/ $\Pi^*=\Pi \cup \{-\alpha_0 \}$ where $\alpha_0$ is the highest root in $\Pi$ and $\Delta^*$ be the extended Dynkin diagram for $\Pi^*$.

Given a black box group $X$ of type $\Delta$ over a finite field of odd prime power order $p^k=q>3$, we have constructed in $X$ \cite{suko03} a \emph{Curtis-Tits configuration}, that is, a system of subgroups $K_\alpha\cong \sl_2(q)$ labeled by nodes of  $\Delta^*$  and satisfying the following conditions:

\begin{itemize}
\item[(a)] $X =\langle K_\alpha \mid \alpha \in \Delta\rangle$;
\item[(b)] $[K_\alpha, K_\beta]=1$ if $\alpha$ and $\beta$ are not connected in $\Delta^*$;
\item[(c)] $\langle K_\alpha,K_\beta \rangle \cong\sl_3(q)$ if $\alpha$ and $\beta$ are connected with a single bond;

\item[(d)] if $z_\alpha$ is the involution in the center of $K_\alpha$ then $[z_\alpha,z_\beta] =1$ for all $\alpha,\beta \in \Delta^*$;
\item[(e)] if we form the elementary abelian group $E = \langle z_\alpha \mid \alpha\in \Delta^*\rangle$, then $H=C_X(E)$ is
    an abelian $p'$-group;
\item[(f)] for all $\alpha\in \Delta^*$, the group $H_\alpha = H\cap K_\alpha$ is a maximal split torus in $K_\alpha$ and $\langle H_\alpha \mid \alpha\in \Delta\rangle = H$;
\item[(g)] the subgroup $H$ normalizes each subgroup $K_\alpha$ for $\alpha\in \Delta^*$.

\end{itemize}

\begin{definition}
The indexed set of subgroups $\{K_\alpha \mid \alpha \in \Delta \}$ which satisfies the conditions {\rm (a)--(g)} above is called a {\rm (}\emph{single bond}{\rm )} \emph{Curtis-Tits configuration}. Similarly,  $\{K_\alpha \mid \alpha \in \Delta^* \}$ is called a {\rm (}\emph{single bond}{\rm )} \emph{extended Curtis-Tits configuration} for\/ $X$.
\end{definition}

\begin{example}
 The following $n-1$ subgroups $\sl_2(q)$ form a Curtis-Tits configuration in $G= \sl_n(q)$.

\setlength{\unitlength}{.025cm}
\begin{picture}(250,290)(-40,0)
\put(70,70){\line(1,0){5}}
\put(70,270){\line(1,0){5}}
\put(70,270){\line(0,-1){200}}
\put(270,270){\line(0,-1){200}}
\put(265,70){\line(1,0){5}}
\put(265,270){\line(1,0){5}}

\put(230,70){{\line(1,0){5}}}
\put(230,110){{\line(1,0){5}}}
\put(270,110){{\line(0,-1){40}}}
\put(230,110){{\line(0,-1){40}}}
\put(265,70){{\line(1,0){5}}}
\put(265,110){{\line(1,0){5}}}
\put(255, 73){*}
\put(235, 73){*}
\put(255,94){*}

\put(210,90){{\line(1,0){5}}}
\put(210,130){{\line(1,0){5}}}
\put(210,130){{\line(0,-1){40}}}
\put(250,130){{\line(0,-1){40}}}
\put(245,90){{\line(1,0){5}}}
\put(245,130){{\line(1,0){5}}}
\put(235, 113){*}
\put(215,94){*}
\put(235,94){*}

\put(166,163){{$\cdot$}}
\put(155,175){{$\cdot$}}
\put(144,187){{$\cdot$}}

\put(190,110){{\line(1,0){5}}}
\put(190,150){{\line(1,0){5}}}
\put(190,150){{\line(0,-1){40}}}
\put(228,150){{\line(0,-1){40}}}
\put(223,110){{\line(1,0){5}}}
\put(223,150){{\line(1,0){5}}}
\put(215, 133){*}
\put(195, 133){*}
\put(195,113){*}
\put(215,113){*}

\put(90,250){{\line(1,0){5}}}
\put(90,210){{\line(1,0){5}}}
\put(90,250){{\line(0,-1){40}}}
\put(130,250){{\line(0,-1){40}}}
\put(125,250){{\line(1,0){5}}}
\put(125,210){{\line(1,0){5}}}
\put(115, 233){*}
\put(95,213){*}
\put(115,213){*}

\put(70,270){{\line(1,0){5}}}
\put(70,230){{\line(1,0){5}}}
\put(70,270){{\line(0,-1){40}}}
\put(110,270){{\line(0,-1){40}}}
\put(105,270){{\line(1,0){5}}}
\put(105,230){{\line(1,0){5}}}
\put(75, 252){*}
\put(95, 252){*}
\put(75,233){*}
\put(95,233){*}


\put(190,210){\Huge $0$}
\put(110,110){\Huge $0$}
\end{picture}
\end{example}

The following result is an immediate consequence of \cite[Corollary 27.4]{gorenstein1994}, which is, in its turn, a special case of Theorem~\ref{th:27.3}.

\begin{theorem} \label{th:single-bond} Let $G$ be a finite group with a single bond Curtis-Tits configuration $\{G_\alpha\}$ over a field of odd order $q>5$ corresponding to one of the Dynkin diagrams $A_n$, $n\geqslant 2$, $D_n$, $n \geqslant 3$, $E_6$, $E_7$, or $E_8$. Then $G$ is isomorphic to a quasi-simple group of Lie type over $\GF(q)$ of the corresponding type and\/ $\{G_\alpha\}$ is the system of root $\sl_2$-subgroups for roots in a system of simple roots $\Pi$ associated with some maximal split torus $H$ of\/ $G$.
\end{theorem}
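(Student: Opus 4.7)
The plan is to deduce the theorem from the Curtis--Tits theorem cited as Theorem~\ref{th:27.3}. Let $G^*$ denote the universal Chevalley group of type $\Delta$ over $\GF(q)$; it carries its own Curtis--Tits system $\{G^*_\alpha = \la X_\alpha, X_{-\alpha}\ra \mid \alpha \in \Pi\}$, where the $X_\alpha$ are the standard root subgroups and each $G^*_\alpha \cong \sl_2(q)$. I would construct a coherent family of homomorphisms $\phi_\alpha : G^*_\alpha \to G_\alpha$ and $\phi_{\alpha\beta}: \la G^*_\alpha, G^*_\beta\ra \to \la G_\alpha, G_\beta\ra$ satisfying the hypotheses of Theorem~\ref{th:27.3}, and then read off the conclusion.

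First, for each $\alpha \in \Pi$ I would fix any isomorphism $\phi_\alpha: G^*_\alpha \to G_\alpha$; this exists since both groups are isomorphic to $\sl_2(q)$. For a disconnected pair $\{\alpha, \beta\} \subseteq \Pi$, condition (b) yields $[G_\alpha, G_\beta] = 1$, matching the central-product structure of $\la G^*_\alpha, G^*_\beta\ra$, so I would simply take $\phi_{\alpha\beta} = \phi_\alpha \cdot \phi_\beta$. For a single-bond pair, condition (c) gives $\la G_\alpha, G_\beta\ra \cong \sl_3(q)$, and the task becomes that of lifting the pair $(\phi_\alpha, \phi_\beta)$ to a single isomorphism between $\la G^*_\alpha, G^*_\beta\ra$ and $\la G_\alpha, G_\beta\ra$ that restricts correctly on each rank-$1$ factor.

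Producing these rank-$2$ isomorphisms \emph{coherently} is the main obstacle I anticipate. The key will be that inside each $\sl_3(q)$-configuration the two fundamental $\sl_2$-subgroups intersect in a prescribed diagonal torus; conditions (d)--(g) supply precisely this structure in $G$, since the toral subgroup $H_\alpha H_\beta \leqslant H$ normalises both $K_\alpha$ and $K_\beta$ and the central involutions $z_\alpha, z_\beta$ commute. Any two such pairs of fundamental $\sl_2$-subgroups in $\sl_3(q)$ are conjugate by a diagonal element, so modifying each $\phi_\alpha$ by conjugation by a suitable element of $H_\alpha$ (which is allowed thanks to condition (g)) should bring the family into coherence.

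Once the coherent family is in place, Theorem~\ref{th:27.3} yields a surjection $G^*/Z(G^*) \twoheadrightarrow G/Z(G)$; for $q>5$ and the listed Dynkin types the left-hand side is simple, so this map is an isomorphism. Since each $K_\alpha \cong \sl_2(q)$ is perfect, the group $G = \la K_\alpha \mid \alpha\in\Pi\ra$ is perfect as well, hence quasi-simple, and therefore a central cover of a simple group of Lie type of the prescribed type---precisely the isomorphism-type statement in the theorem. Transporting the standard root subgroups of $G^*$ through this identification would then exhibit $\{G_\alpha\}$ as the system of root $\sl_2$-subgroups for the fundamental system $\Pi$ attached to the split maximal torus $H$, completing the proof.
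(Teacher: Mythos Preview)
Your approach matches the paper's: the paper gives no independent argument for Theorem~\ref{th:single-bond} but simply records it as an immediate consequence of \cite[Corollary~27.4]{gorenstein1994}, which is itself a specialisation of Theorem~\ref{th:27.3}. So the reduction to Curtis--Tits is exactly the intended route.

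The genuine gap is in your coherence step. Fixing each $\phi_\alpha: G^*_\alpha \to G_\alpha$ arbitrarily and then repairing incompatibilities by conjugation inside $H_\alpha$ cannot work in general: toral conjugation realises only diagonal automorphisms of $\sl_2(q)$, whereas two independently chosen isomorphisms $\phi_\alpha$, $\phi_\beta$ may differ by a field automorphism of $\GF(q)$, and no amount of diagonal adjustment removes that twist inside $\langle G^*_\alpha, G^*_\beta\rangle \cong \sl_3(q)$. The standard remedy (and the content behind the cited corollary) is to \emph{propagate} rather than adjust: fix $\phi_\alpha$ at one base node; for each adjacent $\beta$ pick an isomorphism $\phi_{\alpha\beta}$ between the two copies of $\sl_3(q)$ that restricts to $\phi_\alpha$ on $G^*_\alpha$ (any automorphism of the $\sl_2$-block extends to one of $\sl_3(q)$ preserving the root structure), and then \emph{define} $\phi_\beta$ as its restriction to $G^*_\beta$; continue along the diagram. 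Because the Dynkin diagrams $A_n$, $D_n$, $E_6$, $E_7$, $E_8$ are trees, no cycle condition arises and the resulting family is automatically coherent. (For $A_2$, where $|\Pi|=2$ and Theorem~\ref{th:27.3} does not literally apply, condition~(c) already gives $G\cong\sl_3(q)$ directly.) With this correction the rest of your outline---simplicity of $G^*/Z(G^*)$ for $q>5$, perfectness of $G$, and transport of the root $\sl_2$-subgroups---goes through as you describe.
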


\subsection{A Curtis-Tits configuration for double bond Dynkin diagrams of rank at least 3}
\label{subsec:CTC-double-bond}

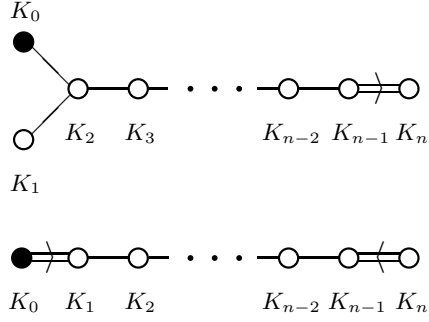
\begin{figure}[htbp]
\begin{center}

\setlength{\unitlength}{.025cm}
\begin{picture}(250,220)(-60,20)
{\huge

\put(-59,185){$\bullet$}
\put(-59,205){\small $K_0$}
\put(-59,133){$\circ$}
\put(-59,113){\small $K_1$}
\put(-47.2,143.5){\line(1,1){20}}
\put(-27.2,169.5){\line(-1,1){20}}
\put(-30,160){$\circ$}
\put(-30,140){\small $K_2$}
\put(-17.2,167.5){\line(1,0){21}}
\put(2,160){$\circ$}
\put(2,140){\small $K_3$}
\put(14.6,167.5){\line(1,0){10}}
\put(33,160){$\cdots$}
\put(73,167.5){\line(1,0){11}}
\put(82,160){$\circ$}
\put(75,140){\small $K_{n-2}$}
\put(94,167.5){\line(1,0){22}}
\put(114,160){$\circ$}
\put(112,140){\small $K_{n-1}$}
\put(126,165.5){\line(1,0){22.3}}
\put(134.5,163.5){\large $\rangle$}
\put(126,169.5){\line(1,0){22}}
\put(146,160){$\circ$}
\put(146,140){\small $K_n$}

\put(-60,70){$\bullet$}
\put(-60,50){\small $K_0$}
\put(-48,75.5){\line(1,0){20}}
\put(-40.5,73.5){\large $\rangle$}
\put(-48,79.5){\line(1,0){20}}
\put(-30,70){$\circ$}
\put(-30,50){\small $K_1$}
\put(-17.2,77.5){\line(1,0){22}}
\put(2,70){$\circ$}
\put(2,50){\small $K_2$}
\put(15,77.5){\line(1,0){10}}
\put(33,70){$\cdots$}
\put(73,77.5){\line(1,0){11}}
\put(82,70){$\circ$}
\put(75,50){\small $K_{n-2}$}
\put(94,77.5){\line(1,0){22}}
\put(114,70){$\circ$}
\put(110,50){\small $K_{n-1}$}
\put(126,75.5){\line(1,0){23}}
\put(134.5,73.5){\large $\langle$}
\put(126,79.5){\line(1,0){22}}
\put(146,70){$\circ$}
\put(146,50){\small $K_n$}
}
\end{picture}
\caption{Extended Dynkin diagrams of type $B_n$ and $C_n$}
\label{ebn}\label{ecn}
\label{arrows}
\end{center}
\end{figure}

Let now $\Delta^*$ be an extended Dynkin diagram of one of the types $B_n$ or $C_n$ for $n \geqslant 3$, see Figure~\ref{arrows}; notice that the labelling of the nodes is chosen so that the nodes $1,2,\dots, n$ form the corresponding Dynkin diagram and the nodes $n-1$ and $n$ are connected by a double bond.

Given a black box group $X$ of type $B_n$ or $C_n$ over a finite field of odd prime power order $p^k=q>5$, \emph{Curtis-Tits configuration}, that is, a system of subgroups $K_\alpha\cong \ppsl_2(q)$ labeled by nodes of  $\Delta^*$  and satisfying the following conditions:


\begin{itemize}
\item[(a)] $X =\langle K_\alpha \mid \alpha \in \Delta\rangle$;
\item[(b)] $[K_\alpha, K_\beta]=1$ if $\alpha$ and $\beta$ are not connected in $\Delta^*$;
\item[(c)]
\begin{itemize}
\item $\langle K_\alpha,K_\beta \rangle \cong\sl_3(q)$ if $\alpha$ and $\beta$ are connected with a single bond;
\item $\langle K_\alpha,K_\beta \rangle \cong\ppsp_4(q)$ if $\alpha$ and $\beta$ are connected with a double bond;
\end{itemize}
\item[(d)]  if $z_\alpha$ and $z_\beta$ are involution in the centers of $K_\alpha \cong K_\beta \cong \sl_2(q)$ then $[z_\alpha,z_\beta] =1$. Moreover, there exits an element $t\in X$ of order $(q-1)/2$ such that $\langle K_1, \ldots, K_{n-1} \rangle \leq C_X(t)$ and $[z_0,t]=[z_n,t]=1$.
\item[(e)] if we form the abelian group \[E = \langle z_\alpha,t \mid \alpha\in \Delta^* \mbox{ and } K_\alpha\cong \sl_2(q) \rangle,\] then $H=C_X(E)$ is an abelian $p'$-group.
\item[(f)] for all $\alpha\in \Delta^*$, the group $H_\alpha = H\cap K_\alpha$ is a maximal split torus in $K_\alpha$ and $\langle H_\alpha \mid \alpha\in \Delta\rangle = H$;
\item[(g)]  the subgroup $H$ normalizes each subgroup $K_\alpha$ for $\alpha\in \Delta^*$.
\end{itemize}

\begin{remark}\label{rem:bncn}
{\rm From the algorithmic point of view, we shall note here that if $X\cong \ppsp_{2n}(q)$ and $\qpone$, then the element $t\in X$ of order $(q-1)/2$ in (d) above can be chosen to be an involution (or a pseudo-involution -- an element of order 4 whose square is a central involution when $X$ is not simple). The restriction $q>5$ is imposed due to the following fact for the groups of type $B_n$. Note first that if $X\cong \om_{2n+1}(q)$, then $C=C_X(z_0, z_1, \ldots, z_{n-1})$ is a subgroup of order $\frac{(q-1)^n}{2}.2^{n}$,  which is not abelian. Here, the subgroup of order $2^n$ arises from the graph automorphisms and the inversions in the corresponding centralizers of involutions. Therefore, if $q=5$, then the element $t$ becomes an involution since its order is $(5-1)/2=2$ and it centralizes the subgroup of order $2^n$. We shall also note that such an element $t$ corresponds to an involution of type $t_n$, that is, $C_X(t)$ has a semisimple component isomorphic to $\om_{2n}^+(q)$, and if $q>5$, then $C_X(t)$ has a semisimple component isomorphic to $\sl_n(q)$. Thus, if $q=5$ then  $C=C_X(z_0, z_1, \ldots, z_{n-1},t)$ is not a maximal split torus. However, in the case of groups of type $B_n$, the black box group algorithm for the construction of a Curtis-Tits configuration does not depend on such an element $t \in X$. We also note here that the Curtis-Tits configuration above for the groups $\ppsp_{2n}(5)$ also holds. For $q = 3$, however, the methods used in the construction of Curtis-Tits configurations in \cite{suko03} do not work as $\sl_2(3)$ is solvable. Hence it is enough to assume that $q>3$ for the algorithmic applications.  We refer the reader to \cite[Section 8]{suko03} for the details about the construction of Curtis-Tits configurations in black box classical groups.
}
\end{remark}

Similarly to Theorem~\ref{th:single-bond}, the following result is an easy consequence of Theorem~\ref{th:27.3}.

\begin{theorem} \label{th:double-bond} Let $G$ be a finite group with an extended double bond Curtis-Tits configuration $\{G_\alpha\}$ over a field of odd order $q>5$ corresponding to one of the extended Dynkin diagrams $B_n$ or $C_n$ {\rm (}$n \geqslant 3${\rm )}.

Then $G$ is isomorphic to a quasi-simple group of Lie type over $\GF(q)$ of the corresponding type and $\{G_\alpha \mid \alpha \in \Delta\}$ is the system of root $\sl_2$-subgroups for roots in a system of simple roots $\Pi$ associated with some maximal split torus $H$ of $G$.
\end{theorem}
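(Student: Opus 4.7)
The plan is to reduce the claim to Theorem~\ref{th:27.3}. Let $G^*$ denote the simply connected Chevalley group over $\GF(q)$ of the relevant type ($B_n$ or $C_n$), with root system $\Sigma$, fundamental system $\Pi = \Delta$, root subgroups $X_\alpha$, and rank-$1$ subgroups $G^*_\alpha = \langle X_\alpha, X_{-\alpha}\rangle$. Invoking Theorem~\ref{th:27.3} requires constructing, for each $\alpha \in \Pi$, a homomorphism $\phi_\alpha : G^*_\alpha \to G_\alpha$, and for each pair $\alpha,\beta \in \Pi$ a homomorphism $\phi_{\alpha\beta}$ of $\langle G^*_\alpha, G^*_\beta\rangle$ into $\langle G_\alpha, G_\beta\rangle$ satisfying $\phi_{\alpha\beta}|_{G^*_\alpha} = \phi_\alpha$.

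For each $\alpha\in\Pi$, since $G^*_\alpha \cong \sl_2(q)$ and $G_\alpha \cong \ppsl_2(q)$, a surjective homomorphism $\phi_\alpha$ exists. To pin this choice down we use condition~(f): fix the maximal split torus $H$ of $G$ and demand that $\phi_\alpha$ send the standard split torus $H^*_\alpha \leqslant G^*_\alpha$ onto $H_\alpha = H\cap G_\alpha$; the intrinsic $\GF(q)^\times$-structure on each rank-$1$ torus then reduces the residual freedom in $\phi_\alpha$ to the choice of a generator of $\GF(q)^\times$, and because $H$ is generated by the $H_\alpha$, these generator choices can be synchronised across $\Pi$. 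For non-adjacent $\alpha,\beta$, condition~(b) gives $[G_\alpha, G_\beta] = 1$, so $\phi_{\alpha\beta} = \phi_\alpha\cdot\phi_\beta$ suffices. For adjacent $\alpha,\beta$, condition~(c) identifies $\langle G_\alpha, G_\beta\rangle$ with $\sl_3(q)$ (single bond) or $\ppsp_4(q)$ (double bond), which are precisely the isomorphism types of $\langle G^*_\alpha, G^*_\beta\rangle$ modulo centre. Their Steinberg presentation determines the embedded root $\sl_2$-subgroups once a split torus is specified, so the already fixed $\phi_\alpha$ and $\phi_\beta$ extend to $\phi_{\alpha\beta}$.

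The hard part is verifying that the rank-$1$ isomorphisms glue coherently into rank-$2$ homomorphisms at every edge of $\Delta$ simultaneously; the double-bond case is the delicate one because $\ppsp_4(q)$ breaks the symmetry between its two adjacent root $\sl_2$-subgroups, so one must ensure that the long/short role assignment imposed by the ambient $B_n$ or $C_n$ structure agrees with that detected inside the abstract $\langle G_\alpha, G_\beta\rangle \cong \ppsp_4(q)$. Conditions~(d)--(g), and in particular the element $t$ which marks the short-root end of the extended diagram, furnish this compatibility. Once the $\phi_\alpha$ and $\phi_{\alpha\beta}$ are in place, Theorem~\ref{th:27.3} yields a surjective homomorphism $G^*/Z(G^*) \to G/Z(G)$. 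Since $G^*/Z(G^*)$ is simple for $q>5$ and $n\geqslant 3$, and $G/Z(G)$ is non-trivial as it contains a non-trivial image of $\sl_3(q)$ (or $\ppsp_4(q)$), this homomorphism is an isomorphism. Hence $G$ is quasi-simple of the claimed Lie type over $\GF(q)$, and the $G_\alpha$ for $\alpha\in\Delta$ are the root $\sl_2$-subgroups attached to the maximal split torus $H$.
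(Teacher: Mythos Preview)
Your proposal is correct and follows exactly the approach indicated in the paper: the paper does not spell out a proof, merely stating that the result ``is an easy consequence of Theorem~\ref{th:27.3}'' (just as Theorem~\ref{th:single-bond} was). Your argument---building the coherent system of $\phi_\alpha$ and $\phi_{\alpha\beta}$ from conditions (a)--(g), invoking Theorem~\ref{th:27.3}, and then using simplicity of $G^*/Z(G^*)$ to upgrade the surjection to an isomorphism---is precisely the intended route, fleshed out in more detail than the paper itself provides.
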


Perhaps it is time to comment on subtle differences between the Curtis-Tits configurations in groups $\ppsp_{2n}$ of type $C_n$ and in groups $\om_{2n+1}$  and  ${\rm Spin}_{2n+1}$ of type $B_n$. Indeed some confusion could be created by the fact that $\psp_{4}(q) \cong \om_{5}(q)$ and $\sp_{4}(q) \cong {\rm Spin}_{5}(q)$. However the root $\ppsl_2$-subgroups $K_{n-1}$ and $K_n$ in $\langle K_{n-1}, K_n\rangle \cong \ppsp_4(q)$ correspond to roots of different length, and therefore correspond to homomorphisms $\sl_2(q) \longrightarrow \ppsp_4(q)$ which are \emph{not} conjugate in ${\rm Aut}\, \ppsp_4(q)$.

In particular,
\begin{itemize}
\item If $K_n \cong \psl_2(q)$ then $\langle K_{n-1}, K_n\rangle \cong \psp_4(q) \cong\om_5(q)$ and $G\cong \om_{2n+1}(q)$;
\item if $K_n \cong \sl_2(q)$ then  $\langle K_{n-1}, K_n\rangle \cong\sp_4(q)\cong {\rm Spin}_5(q)$ and $G\cong \ppsp_{2n}(q)$ or ${\rm Spin}_{2n+1}(q)$; in that case, further information comes from the behaviour of the involution $z_n \in Z(K_n)$:
    \begin{itemize}
    \item if $z_n \in Z(G)$ then $G\cong {\rm Spin}_{2n+1}(q)$;
    \item if $z_n \not\in Z(G)$ then $G\cong \ppsp_{2n}(q)$.
    \end{itemize}
\end{itemize}

Actually the distinctions between the cases of $\psp_{2n}$, ${\rm Sp}_{2n}$, $\om_{2n+1}$ and ${\rm Spin}_{2n+1}$ become clear at early stages of construction of an extended Curtis-Tits configuration \cite{suko02,suko03} and therefore any potential confusion is easily avoidable.

Indeed, before we start constructing Curtis-Tits system for a classical group, we construct a long root $\sl_2(q)$-subgroup $K$ and check whether, for a random element $g \in G$, $\langle K,K^g \rangle$ is
\begin{itemize}
\item $\sl_4(q)$,
\item $\sp_4(q)$,
\item $\su_4(q)$, or
\item $\so^+_8(q)$.
\end{itemize}
This is the case with probability at least $1-O(1/q)$ provided that the rank is big enough if $G$ is $\sl_n(q)$, $\sp_{2n}(q)$, $\su_n(q)$ or an orthogonal group, respectively. This is Theorem 7.1 in \cite{suko03}, and Theorem 7.2 of the same paper presents an algorithm which computes the type of the group (not distinguishing the groups of type $B_n$ and $D_n$). The differences between $\sp_{2n}(q)$ and $\psp_{2n}(q)$, and $\om_{2n+1}(q)$ and ${\rm Spin}_{2n+1}(q)$ is quite clear as one of them has a central involution and the other does not have such an involution.

\section{Steinberg generators of $\ppsl_2(q)$}\label{sec:base}

Let $G \cong \psl_2(q)$ and $\qpone$. In this section we construct, in G, a unipotent element $u$, a Weyl group element $w$ which conjugates $u$ to its opposite and the split torus $T<G$ which normalizes the root subgroup $U$ containing $u$. Note that all of this construction can be done in $\sl_2(q)$ with obvious modifications in the arguments.

\begin{remark}\label{rem:regular}
{\rm
It is well-known that the non-trivial elements of $G\cong \psl_2(q)$  are either semisimple or unipotent. Moreover, any non-trivial semisimple (or unipotent) element belongs to a unique maximal torus (or root subgroup).
}
\end{remark}

\begin{lemma}\label{uni:exist}
Let $G\cong \psl_2(q)$, $\qpone$ and $q=p^k$ for some $k\geqslant 1$. If $i \in G$ is an involution, then there exists $g \in G$ such that $ii^g$ has order $p$.
\end{lemma}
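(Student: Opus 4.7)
My plan is to locate an explicit involution $j \in G$ of the form $j = iu$ with $u$ unipotent, and then exploit the fact that all involutions of\/ $\psl_2(q)$ are conjugate in order to realise $j$ as $i^g$. Once this is in place, the product $ii^g = ij = i(iu) = u$ automatically has order $p$, giving the conclusion.

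First I would normalise the choice of $i$. Since $\qpone$, a maximal split torus $T$ of\/ $\psl_2(q)$ has even order $(q-1)/2$, hence contains a unique involution, which in the Steinberg parametrisation~(\ref{steinberg:sl2}) is $h(t_0)$ for any $t_0 \in \GF(q)$ with $t_0^2 = -1$ (such a $t_0$ exists precisely because $\qpone$). Because $\psl_2(q)$ has a single conjugacy class of involutions when $q$ is odd, I may replace $i$ by a suitable conjugate and assume $i = h(t_0)$; a solution $g$ of the problem for the new $i$ yields one for the original by conjugation.

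Next, let $U = \{u(s) \mid s \in \GF(q)\}$ be the root subgroup of\/ $G$ normalised by $T$. The second identity of~(\ref{opp:uni}) yields
\[
u(s)^i = u(t_0^{-2} s) = u(-s) = u(s)^{-1},
\]
so $i$ inverts $U$ elementwise. Pick any non-trivial $u \in U$; by Remark~\ref{rem:regular}, $u$ has order $p$, and a direct computation gives
\[
(iu)^2 \;=\; i \cdot u \cdot i \cdot u \;=\; u^i \cdot u \;=\; u^{-1} u \;=\; 1.
\]
Since $i \notin U$, the element $j := iu$ is a non-trivial involution of\/ $G$.

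Finally, invoking once more the single-class-of-involutions property of\/ $\psl_2(q)$, I choose $g \in G$ with $j = i^g$; then $ii^g = i j = i(iu) = u$ has order $p$, as required. The entire argument is essentially bookkeeping built on two standard inputs, namely the existence of $\sqrt{-1}$ in $\GF(q)$ (which delivers an involution inside the split torus $T$) and the conjugacy of all involutions in\/ $\psl_2(q)$ for odd $q$; the only calculation one must perform is the one-line check that $iu$ squares to the identity, so I do not anticipate any real obstacle.
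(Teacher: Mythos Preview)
Your argument is correct and rests on the same observation as the paper's: since $\qpone$, the involution $i$ lies in a split torus and therefore inverts the root subgroup $U$, which produces the desired unipotent product. The only real difference is in how $g$ is chosen. You build the auxiliary involution $j = iu$ and then appeal to the single conjugacy class of involutions to realise $j$ as $i^g$; the paper simply takes $g = u \in U$ and computes $ii^u = u^2$ directly, avoiding both the normalisation $i = h(t_0)$ and the conjugacy-of-involutions step entirely. Your route is perfectly valid, but the shortcut $g = u$ is worth noting: once you have established $u^i = u^{-1}$, the element $g$ is already in your hands and no further black-box facts about $\psl_2(q)$ are needed.
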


\begin{proof}
Since $\qpone$,  any involution in $G$ belongs to a torus of order $(q-1)/2$.  Assume that $i\in T$ for some torus $T<G$. By Remark \ref{rem:regular}, $T$ is uniquely determined by the involution $i$. Moreover, there are exactly two Borel subgroups $B_1$ and $B_2$ which contain $T$. If  $B_1= T \ltimes U$ and $B_2=T\ltimes V$, then $U$ and $V$ are opposite unipotent subgroups of $G$.
 Now observe that $u^i=u^{-1}$ for any $u\in U$ (or $u \in V$), which implies that the element $ii^u$ has order $p$ and the lemma follows.
\end{proof}

\begin{lemma}\label{uni:prop:sl2}
Let $G\cong \psl_2(q)$, $\qpone$ and $q=p^k$ for some $k\geqslant 1$. For any involution $i\in G$, the probability that $ii^g$ has order $p$ for a random element $g\in G$ is at least $1/q$.
\end{lemma}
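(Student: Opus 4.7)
The plan is to obtain a lower bound on the number of elements $g\in G$ satisfying $o(ii^g)=p$ by exhibiting $2(q-1)$ explicit such $g$'s giving pairwise distinct values of $ii^g$, and then multiplying by the constant fibre size of the map $g\mapsto ii^g$.

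The structure produced in the proof of Lemma~\ref{uni:exist} provides the starting point: $i$ lies in a unique maximal torus $T<G$ of order $(q-1)/2$ (by Remark~\ref{rem:regular}), and $T$ is contained in exactly two Borel subgroups with opposite root subgroups $U$ and $V$. Using $u^i=u^{-1}$ for $u\in U$, a direct calculation (implicit in the proof of Lemma~\ref{uni:exist}) gives $ii^u=u^2$, and similarly $ii^v=v^2$ for $v\in V$; both have order $p$ as soon as $u,v\neq 1$, since $U$ and $V$ are elementary abelian of exponent $p$.

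I would then verify that the $2(q-1)$ values $\{u^2:u\in U\setminus\{1\}\}\cup\{v^2:v\in V\setminus\{1\}\}$ are pairwise distinct. Within each family this is immediate because squaring is a bijection on $U\cong \mathbb{F}_q$ (it is multiplication by $2$, which is invertible in $\mathbb{F}_q$). Across families, $u^2=v^2$ forces $u^2\in U\cap V$; but the TI-property of Sylow $p$-subgroups in $\psl_2(q)$ gives $U\cap V=\{1\}$, whence $u=1$, so the two families are disjoint.

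To finish, the map $g\mapsto ii^g$ has constant fibre size $|C_G(i)|$, because $ii^{g_1}=ii^{g_2}$ is equivalent to $i^{g_1}=i^{g_2}$, hence to $g_1g_2^{-1}\in C_G(i)$. For $\qpone$, the involution $i$ is the unique involution in the split torus $T$, so any $g\in C_G(i)$ must normalize the unique maximal torus containing $i$; thus $C_G(i)=N_G(T)$ is dihedral of order $q-1$. Using $|G|=q(q-1)(q+1)/2$,
\[
\frac{|\{g\in G:o(ii^g)=p\}|}{|G|}\;\geqslant\;\frac{2(q-1)\cdot(q-1)}{q(q-1)(q+1)/2}\;=\;\frac{4(q-1)}{q(q+1)}\;\geqslant\;\frac{1}{q},
\]
where the last inequality reduces to $3q\geqslant 5$. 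The only delicate point is the cross-family disjointness of the squares, which crucially depends on the TI-property of Sylow $p$-subgroups in $\psl_2(q)$; everything else is bookkeeping with Remark~\ref{rem:regular} and the well-known order of the centralizer of an involution in $\psl_2(q)$ when $\qpone$.
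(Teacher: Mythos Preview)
Your proof is correct and follows essentially the same approach as the paper: both arguments exhibit the $2(q-1)$ conjugates $i^g$ in $(Ui\cup Vi)\setminus\{i\}$ for which $ii^g$ is a nontrivial $p$-element, multiply by the fibre size $|C_G(i)|=q-1$, and divide by $|G|=q(q^2-1)/2$ to obtain $\dfrac{4(q-1)}{q(q+1)}>\dfrac{1}{q}$. Your version is somewhat more explicit in verifying distinctness (via the bijectivity of squaring on $U$ and the TI-property for $U\cap V$), whereas the paper phrases the same count in terms of the coset $Ui$ of involutions inverting $U$; but the underlying counting is identical.
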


\begin{proof}
Normalizers of unipotent subgroups of order $q$ in $G$ are Borel subgroups, and it is well known that any two distinct Borel subgroups intersect over a maximal torus of order $(q-1)/2$. Since $\qpone$, this torus contains an involution, uniquely determined by this involution and normalizes exactly two unipotent subgroups of order $q$---see proof of Lemma~\ref{uni:exist}.  Hence there are only two unipotent subgroups $U,V$ of order $q$ which are normalized by $i$.

 Unipotent subgroups $U$ and $V$ are also normalized by the torus $T$ containing $i$, and are the opposite unipotent subgroups of each other in the sense of the root system associated with the torus $T$. Since $C_G(U)=U$, all involutions of the form $i^g$, $g \in G$, which invert $U$ lie in the coset $Ui$. By considering the opposite unipotent subgroup $V$, we have $2(q-1)$ involutions of the form $i^g$, $g \in G$, such that $ii^g$ is an element of order $p$. Since $|G| = q(q^2-1)/2$, the proportion of the elements $g\in G$ such that $ii^g$ is of order $p$ is
\[
\frac{2(q-1)}{q(q^2-1)/2} \cdot (q-1) = \frac{4(q-1)}{q(q+1)} > \frac{1}{q}.
\]
\end{proof}

\begin{lemma}\label{uni:sl2}
Let $G\cong \psl_2(q)$ and $\qpone$. Assume that $i\in G$ is an involution and $u=ii^g$ is a unipotent element for some $g \in G$. If $i \in T$ for some torus $T$ and $U$ is a root subgroup containing $u$, then  $T < N_G(U)$.
\end{lemma}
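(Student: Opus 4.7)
The plan is to first show that the given involution $i$ inverts the unipotent element $u$, then deduce that $i$ normalizes the root subgroup $U$ containing $u$, and finally identify $U$ with one of the two root subgroups normalized by $T$ that were found in the proof of Lemma~\ref{uni:prop:sl2}.

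Step one is a direct computation. Since $u=ii^g$ with both $i$ and $i^g$ involutions, we have
\[
u^i \;=\; i\,(ii^g)\,i \;=\; i^g i \;=\; (ii^g)^{-1} \;=\; u^{-1},
\]
so $i$ inverts $u$.

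Step two uses the uniqueness statement of Remark~\ref{rem:regular}. Since $u\in U$, the conjugate $U^i$ is a root subgroup containing $u^i=u^{-1}$. But $u^{-1}$ lies in $U$ as well, and by Remark~\ref{rem:regular} the non-trivial unipotent element $u^{-1}$ belongs to a unique root subgroup. Hence $U^i=U$, i.e.\ $i\in N_G(U)$.

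Step three invokes the structural count established inside the proof of Lemma~\ref{uni:prop:sl2}: if $i$ normalizes a root subgroup $R$ of order $q$, then $i$ lies in $N_G(R)$, which is a Borel subgroup of $G$, and moreover $i$ lies in the unique maximal torus of $N_G(R)$ containing $i$, namely $T$. Therefore every Borel normalized by $i$ contains $T$; there are exactly two such Borels, and their unipotent radicals $U,V$ are the only root subgroups normalized by $i$, and they coincide with the two root subgroups normalized by $T$. By step two, our $U$ is one of these two subgroups, whence $T<N_G(U)$. The only mildly delicate point is the identification of ``root subgroups normalized by $i$'' with ``root subgroups normalized by $T$'', but this is already packaged into the proof of Lemma~\ref{uni:prop:sl2} and requires no additional work.
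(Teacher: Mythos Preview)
Your proof is correct and follows essentially the same line as the paper's: both first show $u^i=u^{-1}$ and hence $i\in N_G(U)$, then use the uniqueness of the maximal torus through $i$ (Remark~\ref{rem:regular}) to conclude $T<N_G(U)$. The only cosmetic difference is that in Step~3 you route the conclusion through the count established in the proof of Lemma~\ref{uni:prop:sl2}, whereas the paper argues more directly: $i\in N_G(U)$, $N_G(U)$ is a Borel containing a maximal torus through the $p'$-element $i$, and by Remark~\ref{rem:regular} that torus must equal $T$.
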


\begin{proof}
Since $u^i=u^{-1}$, we have $i \in N_G(\langle u \rangle)$. Moreover, since $N_G(U)$ contains a torus and $i \in T$, by Remark \ref{rem:regular}, we have $T< N_G(U)$ and the lemma follows.
\end{proof}

\begin{lemma}\label{uni:gen:sl2}
Let $G\cong \psl_2(q)$ and $U<G$ be the root subgroup containing a unipotent element $u \in G$. Assume also that $T=\langle t \rangle$ is a maximal torus in $N_G(U)$. Then $\langle u, t \rangle = N_G(U)$. In particular, $U$ is the derived subgroup of $\langle u, t \rangle$.
\end{lemma}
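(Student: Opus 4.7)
The plan is to recognize $N_G(U)$ as a Borel subgroup $B = T\ltimes U$ of order $q(q-1)/2$, reduce the claim to showing that the $T$-invariant subgroup of $U$ generated by $u$ is all of $U$, and then establish this via a counting/dimension argument on the additive span of squares in $\mathbb{F}_q$.

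First I would set $H=\langle u,t\rangle$ and note that $H\leqslant N_G(U)=T\ltimes U$, where $U\cong(\mathbb{F}_q,+)$ and $T$ is cyclic of order $(q-1)/2$ acting on $U$ by conjugation. Identifying $T$ with $\{\pm s\}\cdot\{\pm 1\}\backslash\mathbb{F}_q^*$ inside $\psl_2(q)$, the conjugation action of $T$ on $U$ is multiplication by the set of nonzero squares $\mathrm{Sq}\subseteq\mathbb{F}_q^*$. Let $U'=H\cap U$. Since $t\in H$, the subgroup $U'$ is $T$-invariant, and since it contains $u$, viewing $u$ as $a\in\mathbb{F}_q^*$ gives $U'\supseteq a\cdot\mathrm{Sq}$. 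As $U'$ is an additive subgroup, it contains $a\cdot V$, where $V$ is the additive subgroup (equivalently, the $\mathbb{F}_p$-subspace) of $\mathbb{F}_q$ spanned by $\mathrm{Sq}$.

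The main step is to show $V=\mathbb{F}_q$. Since $V$ is an $\mathbb{F}_p$-subspace containing $0$ and all $(q-1)/2$ nonzero squares, we have $|V|\geqslant (q+1)/2$. If $V$ were a proper subspace, then $|V|\leqslant q/p$, which together with $\qpone$ (in particular $p\geqslant 3$ and $q\geqslant 5$) contradicts $(q+1)/2>q/p$. Hence $V=\mathbb{F}_q$, so $a\cdot V=U$ and therefore $U'=U$. Combined with $T\leqslant H$, this gives $H\supseteq T\cdot U=N_G(U)$, and the reverse inclusion is immediate.

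For the final assertion about the derived subgroup, I would use that $N_G(U)/U\cong T$ is abelian, so $[N_G(U),N_G(U)]\leqslant U$, and conversely compute $[T,U]$ directly: in additive notation on $U$, the commutators are the elements $(s^2-1)b$ with $s\in\mathbb{F}_q^*$ and $b\in\mathbb{F}_q$. Since $|T|=(q-1)/2\geqslant 2$ there exists $s$ with $s^2\neq 1$, so as $b$ ranges over $\mathbb{F}_q$ we get all of $\mathbb{F}_q$, giving $[T,U]=U$ and hence $[N_G(U),N_G(U)]=U$.

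The only genuinely nontrivial step is the additive-span statement $V=\mathbb{F}_q$; everything else is bookkeeping inside the Borel subgroup $T\ltimes U$. It is worth noting that this dimension estimate is exactly the place where $\qpone$ (which forces $|T|>1$, so that the $T$-orbit of $u$ is a large subset of $U$) and $q\geqslant 5$ enter the argument.
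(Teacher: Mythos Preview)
Your proof is correct, but it takes a different route from the paper. The paper's proof is a one-liner: it simply invokes the well-known fact that $U$ is a minimal normal subgroup of $N_G(U)=TU$, from which both assertions follow immediately (any nontrivial $T$-invariant subgroup of $U$ must be all of $U$, and the derived subgroup is a nontrivial normal subgroup contained in $U$). Your argument, by contrast, proves this minimality from scratch: you identify $U$ with $(\mathbb{F}_q,+)$ and the $T$-action with multiplication by nonzero squares, and then use the counting inequality $(q+1)/2>q/p$ to show that the additive $\mathbb{F}_p$-span of the squares is all of $\mathbb{F}_q$. What you gain is a self-contained argument that does not appeal to an unproved structural fact; what the paper gains is brevity.

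One small correction to your closing commentary: the inequality $(q+1)/2>q/p$ holds for every odd prime power $q$, so the hypothesis $q\equiv 1\bmod 4$ is not actually used in this lemma. All that is needed is odd characteristic (so $p\geqslant 3$) and, for the derived-subgroup statement, $q\geqslant 5$ to ensure $|T|>1$. The congruence condition is a standing assumption of the surrounding section but plays no role here.
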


\begin{proof}
It is well-known that $U$ is a minimal normal subgroup of $N_G(U)=TU$. Hence the lemma follows.
\end{proof}

\begin{remark}\label{rem:weyl}
{\rm
Let $G\cong \psl_2(q)$ and $\qpone$. Let $T$ be a torus of order $(q-1)/2$ containing an involution $i$. Then $C_G(i)=T\rtimes \langle j \rangle$ where $t^j=t^{-1}$ for all $t \in T$. Observe that if $U$ is a root subgroup normalized by $T$, then $U^j$ is the opposite root subgroup of $U$ in $G$, that is, $G=\langle U,U^j \rangle$, see \cite[Lemma 6.1.1 and 7.2.1]{carter1972}. We will call the element $j$ a \emph{Weyl group element}.
}
\end{remark}

It is well-known that any two maximal tori of fixed order in $\psl_2(q)$ are conjugate. The following lemma will be used to find a conjugating element for a given two tori of order $(q-1)/2$ in $\psl_2(q)$.

\begin{lemma}\label{lem:align}
Let $G\cong \psl_2(q)$ and $\qpone$. Assume that $T_1,T_2$ be two tori of order $(q-1)/2$ in $G$, and $i_1,i_2$ are the involutions in $T_1,T_2$, respectively. Assume also that $|i_1i_2|=m$ is odd. Then $T_1^z=T_2$ where $z=(i_1i_2)^{(m+1)/2}$.
\end{lemma}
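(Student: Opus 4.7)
The plan is to exploit the standard dihedral relation: the product $x = i_1 i_2$ is inverted by each of $i_1$ and $i_2$, so $\langle i_1, i_2\rangle$ is dihedral of order $2m$, and the element $z = x^{(m+1)/2}$ conjugates $i_1$ to $i_2$. Then I will invoke Remark~\ref{rem:regular} to conclude that $T_1^z = T_2$.

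More concretely, first I would observe that $i_1 x i_1 = i_1(i_1 i_2)i_1 = i_2 i_1 = x^{-1}$, so $x^{i_1} = x^{-1}$. Using this inversion, I compute
\[
i_1^{z} = x^{-(m+1)/2}\, i_1 \, x^{(m+1)/2} = x^{-(m+1)/2} \cdot x^{-(m+1)/2}\, i_1 = x^{-(m+1)}\, i_1 .
\]
Since $m$ is odd and $x^m = 1$, we have $x^{-(m+1)} = x^{-1}$, hence
\[
i_1^{z} = x^{-1} i_1 = (i_1 i_2)^{-1} i_1 = i_2^{-1} i_1^{-1} i_1 = i_2.
\]

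Finally, since $\qpone$, the order $(q-1)/2$ of $T_1$ is even, so $T_1$ contains a unique involution, namely $i_1$; the same applies to $T_2$ and $i_2$. Now $T_1^z$ is a maximal torus of order $(q-1)/2$ containing $i_1^z = i_2$, and by Remark~\ref{rem:regular} the involution $i_2$ lies in a unique maximal torus. Therefore $T_1^z = T_2$, as required.

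The whole argument is essentially a one-line dihedral computation plus uniqueness from Remark~\ref{rem:regular}; the only mildly subtle point is tracking that $m$ odd together with $x^m=1$ turns $x^{-(m+1)}$ into $x^{-1}$ in the last step, which is where the specific exponent $(m+1)/2$ (rather than any other power) is used.
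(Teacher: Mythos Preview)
Your proof is correct. The core step---the dihedral computation showing $i_1^{z}=i_2$---is exactly what the paper does. The only difference is in how you finish: the paper passes to centralizers, writing $C_G(i_2)=T_2\rtimes\langle j_2\rangle = C_G(i_1)^z = T_1^z\rtimes\langle j_1\rangle^z$ and then argues that this dihedral group has a unique cyclic subgroup of order $(q-1)/2$, which forces $T_1^z=T_2$; this requires a separate case $q=5$ (where $(q-1)/2=2$ and the uniqueness-of-cyclic-subgroup argument would need care). Your route via Remark~\ref{rem:regular} is cleaner: since $i_2$ is a non-trivial semisimple element it lies in a unique maximal torus, so $T_1^z=T_2$ immediately, with no case split needed.
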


\begin{proof}
Assume first that $q>5$. Let $D=\langle i_1,i_2\rangle$, then $D$ is a dihedral group of order $2m$ and $i_1^z=i_2$.  Since $C_G(i_1)=T_1\rtimes \langle j_1 \rangle$ and $C_G(i_2)=T_2\rtimes \langle j_2 \rangle$ where $j_1$ and $j_2$ are involutions inverting $T_1$ and $T_2$, respectively, we have
\[
T_2\rtimes \langle j_2 \rangle=C_G(i_2)=C_G(i_1^z)=C_G(i_1)^z=T_1^z\rtimes \langle j_1 \rangle^z.
\]
Since there is only one cyclic group of order $(q-1)/2$ in $T_2 \rtimes \langle j_2 \rangle$, we have $T_2=T_1^z$. If $q=5$, then $T_1=\langle i_1 \rangle$ and $T_2 = \langle i_2 \rangle$. Therefore, $T_1^z=T_2$ and the lemma follows.
\end{proof}





\section{An algorithm for $\psl_2(q)$}

Let $G\cong \psl_2(q)$ and  $t\in G$ be an element of order $(p\pm 1)/2$ where $(p\pm 1)/2$ is even. Let $s \in \langle t \rangle$ be an involution, $r \in G$ an involution which inverts $t$, and $x\in G$  an element of order $3$ which normalizes $\langle s, r\rangle$. The subgroup $L = \langle s,r,x \rangle \cong \Alt_4$ plays a crucial role in our algorithm. Observe  that the preimage of the elements $s$ and $r$ in $\widetilde{G}\cong \sl_2(q)$ generate the quaternion group $Q$ of order 8 so the preimage  of $L=\langle s,r,x \rangle$ is a subgroup of $N_{\widetilde{G}}(Q)$.

\begin{lemma}\label{the:killer}
Assume that $G\cong \psl_2(q)$, $q=p^k$ for some $k\geqslant 2$ and $t\in G$ is an element of order $(p\pm 1)/2$ where $(p\pm 1)/2$ is even. Let $s \in \langle t \rangle$ be an involution, $r \in G$ an involution which inverts $t$, and $x\in G$  an element of order $3$ which normalizes $\langle s, r\rangle$. Then, except for $p=5,7$, we have $\langle t, x \rangle \cong \psl_2(p)$. Moreover, if  $a$ divides $k$ and $t$ is of order $(p^a\pm1)/2$ where $(p^a\pm1)/2$ is even, then $\langle t, x \rangle \cong \psl_2(p^a)$.
\end{lemma}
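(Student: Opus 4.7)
My plan is to apply Dickson's classification of the subgroups of $\psl_2(q)$ (for $q$ odd) to $H := \langle t, x \rangle$. First I would verify that the full $\Alt_4$-subgroup $L$ lies in $H$. Indeed, $s \in \langle t\rangle \leq H$ and $x \in H$; since $x$ has order $3$ and normalizes $V_4 = \langle s, r\rangle$ without centralizing it (the order-$3$ element is not a self-map of a $V_4$ fixing every element), conjugation by $x$ cycles the three non-identity involutions of $V_4$. Thus $s^x, s^{x^2} \in H$ account for the remaining two involutions of $V_4$, giving $V_4 \leq H$ and hence $L = \langle V_4, x\rangle \leq H$.

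Next I would invoke Dickson's theorem to restrict the possible isomorphism type of $H$: cyclic, dihedral, a subgroup of a Borel subgroup (a $p$-local $C_p^m \rtimes C_d$), $\Alt_4$, $\Sym_4$, $\Alt_5$, $\psl_2(p^b)$ for some $b \mid k$, or $\pgl_2(p^b)$ for some $b$ with $2b \mid k$. Because $L \cong \Alt_4 \leq H$ contains a $V_4$ with an outer order-$3$ normalizer, the cyclic, dihedral, and Borel-type cases are immediately excluded. The element $t$ has order $(p^a \pm 1)/2$, which is at least $6$ once $p^a \geq 11$; hence, away from the excluded small characteristics $p = 5, 7$ and from a small borderline in $p^a$, this order strictly exceeds the largest element orders in $\Alt_4$, $\Sym_4$ and $\Alt_5$. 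We conclude that $H \cong \psl_2(p^b)$ or $\pgl_2(p^b)$ for some $b \mid k$.

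Finally, I would pin down $b = a$. Any element of $\psl_2(p^b)$ of order coprime to $p$ lies in a maximal torus of order $(p^b - 1)/2$ or $(p^b + 1)/2$, and so $(p^a \pm 1)/2$ must divide one of these; an elementary divisibility argument based on the multiplicative order of $p$ modulo $p^a \pm 1$ then forces $a \mid b$, giving $b \geq a$. For the reverse inequality I would argue that $H$ is contained in the subfield subgroup $\psl_2(p^a) \leq \psl_2(q)$: the torus $\langle t\rangle$ is a subgroup of the appropriate maximal torus of $\psl_2(p^a)$, and the attached $V_4$ together with an order-$3$ normalizer can be located inside $\psl_2(p^a)$, using that $N_G(V_4) = N_{\psl_2(p^a)}(V_4)$ in the regime of interest. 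Then $|H| \leq |\psl_2(p^a)|$ yields $b \leq a$, so $b = a$; the $\pgl_2$ alternative is discarded by the same size/torus comparison, or by noting that $\langle t, x\rangle$ lies in the derived subgroup of any enveloping $\pgl_2(p^b)$.

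The hardest step will be the last containment $H \leq \psl_2(p^a)$. Since $r$ is not uniquely determined by the requirement of inverting $t$, and $x$ is not uniquely determined by the requirement of normalizing $\langle s, r\rangle$, one must show that for $p \notin \{5,7\}$ and $p^a$ large enough, any valid choice of $r$ and $x$ inevitably yields a configuration lying inside the subfield $\psl_2(p^a)$. The borderline case $p^a = 9$ illustrates why this is delicate: there $|t| = 4$, the element $t$ itself normalizes $V_4$, and $\langle t, x\rangle$ can be trapped inside $N_G(V_4) \cong \Sym_4 \subsetneq \psl_2(p^a)$. Ruling out such trapping requires the order of $t$ to exceed $5$, and it is precisely this threshold that parallels the exclusion of $p = 5, 7$ in the statement.
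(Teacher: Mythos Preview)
Your use of Dickson's list is shared with the paper, and your reduction ``$L \cong \Alt_4$ sits in $\langle t,x\rangle$, then rule out the small subgroups by the order of $t$'' is exactly how the paper finishes. The divergence is in how containment in a subfield subgroup is obtained, and here your direction is backwards in a way that creates a real gap. You try to fix a copy of $\psl_2(p^a)$ by the torus $\langle t\rangle$ and then argue that $r$ and $x$ must fall into it (your proposed equality $N_G(V_4)=N_{\psl_2(p^a)}(V_4)$). But $r$ is only required to invert $t$, and in $\psl_2(q)$ every involution in the coset $N_G(T)\smallsetminus T$ does so, where $T$ is the full torus of order $(q\pm1)/2$ through $t$; only $(p^a\pm1)/2$ of these $(q\pm1)/2$ involutions lie in your chosen subfield copy. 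So for a generic admissible $r$ the containment simply fails, and your proposed normalizer equality is false in the ambient $G$.

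The paper reverses the order of anchoring. It first places $L=\langle s,r,x\rangle\cong\Alt_4$ inside \emph{some} $H\cong\psl_2(p)$ (every $\Alt_4$ in $\psl_2(q)$ lies in such a subfield copy), and then pulls $t$ into that same $H$ by a one-line torus argument: $s\in L\leq H$, so the unique maximal torus $T$ of $G$ through $s$ meets $H$ in a cyclic group of order $(p\pm1)/2$; since $T$ is cyclic it has a unique subgroup of that order, hence $\langle t\rangle=T\cap H\leq H$. This gives $\langle t,x\rangle\leq H$ at once, and then your Dickson step inside $H$ (or the paper's equivalent: $\Alt_4$ is maximal or sits in $\Sym_4$, and $|t|\geq 5$ rules both out) yields $\langle t,x\rangle=H$. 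The moral is that the subfield copy should be pinned down by the whole $\Alt_4$ (which absorbs the free choices of $r$ and $x$), not by $t$; then the cyclicity of $T$ forces $t$ to follow. With this change your outline becomes the paper's proof, and the separate ``$a\mid b$, $b\leq a$'' bookkeeping becomes unnecessary.
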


\begin{proof}
Let $L=\langle s,r,x \rangle \cong \Alt_4\cong \psl_2(3)$. Observe that $L$ is a subgroup of some $H\leqslant G$ where $H\cong \psl_2(p)$. Since $s=t^m$ for some $m\geqslant 1$, $t \in C_G(s)$ and $t$ is contained in a torus $T$ of order $(q\pm1)/2$ in $C_G(s)$. Now $T\cap H$ has order $(p\pm 1)/2$. Since $T$ is cyclic, it has only one subgroup of order $(p\pm 1)/2$ so $t\in H$. Thus $\langle t,x\rangle \leqslant H$. By the subgroup structure of $\psl_2(p)$, the subgroup $L$ is either a maximal subgroup of $H$ or it is contained in $\Sym_4\leqslant H$. Hence, if $|t|\geqslant 5$, or equivalently, $p\geqslant 9$ then we have $\langle t,x\rangle =H$ since $L$ does not contain elements of order bigger than 5. As we noted above, if $p=3$, then $L\cong \Alt_4\cong \psl_2(3)$.

Observe that if $a$ divides $k$ and $|t|=(p^a\pm 1)/2$, then $t$ belongs to a subgroup $H\cong \psl_2(p^a)$. Assuming that $(p^a\pm1)/2$ is even, the lemma follows from the arguments above.
\end{proof}

\begin{remark}\label{rem:the:killer}
{\rm
Following the notation of Lemma \ref{the:killer}, observe that if $p=5$, then $s=t$, and if $p=7$, then $|t|=4$ and  $\langle t,x\rangle =\Sym_4$. Therefore, in these cases, we have $\langle t,x \rangle \ncong \psl_2(p)$. It is clear that if $G\cong \sl_2(q)$, then, by considering the pseudo-involutions (whose squares are the central involution in $G$), the same result in Lemma \ref{the:killer} holds. Note that,  in this case, we consider the elements $t \in G$ of order $p\pm 1$ where $(p\pm 1)/2$ is even. Similarly, following the notation in Lemma \ref{the:killer}, if $p=5$ or $7$, then $\langle t,x \rangle \ncong \sl_2(p)$.
}
\end{remark}

The following lemma is concerned with the construction of the element of order 3 in $\Alt_4 \leqslant \psl_2(q)$. Let $V = \{1,i_1,i_2,i_3 \}$ be a subgroup of $G$ isomorphic to Klein 4-group. For any random element $g \in G$, denote $j_\ell=i_{\ell}^{g}$ for  $\ell=1,2,3$.

\begin{lemma}\label{elt:3}
Together with the setting above, assume that $t_1=i_1j_2$ has odd order $m_1$. Set $u_1=t_1^{\frac{m_1+1}{2}}$ and $k=i_3^{gu_1^{-1}}$. Assume also that $t_2=i_2k$ has odd order $m_2$ and $u_2=t_2^{\frac{m_2+1}{2}}$. Then  the element $x=gu_1^{-1}u_2^{-1}$ permutes $i_1, i_2, i_3$. In particular, $x \in N_G(V)\leqslant \Sym_4$ and $x$ has order 3.
\end{lemma}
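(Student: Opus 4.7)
The plan is to apply the Bray-type trick twice, once for each of the odd-order products $t_1$ and $t_2$. The underlying algebraic fact is: if $a,b$ are involutions and $ab$ has odd order $m$, then, working inside the dihedral group $\langle a,b\rangle$, the element $c=(ab)^{(m+1)/2}$ satisfies $a^c=b$; this is because $a$ inverts $ab$ (and hence $c$), while $c^2=(ab)^{m+1}=ab$, so a direct two-line manipulation gives $c^{-1}ac=b$. This is essentially the same computation as the definition of $\zeta_1$ in the paper.

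First I would apply this to the pair of involutions $(i_1,j_2)$ via $t_1=i_1 j_2$: the conclusion is $i_1^{u_1}=j_2=i_2^g$, which after rearrangement yields $i_2^{gu_1^{-1}}=i_1$. Thus the element $h_1:=gu_1^{-1}$ conjugates $V$ onto another Klein 4-group $V^{h_1}$. By construction $i_2^{h_1}=i_1$ and $i_3^{h_1}=k$, so $V^{h_1}=\{1,\ell,i_1,k\}$ with $\ell=i_1^{h_1}$. The crucial observation is that $V^{h_1}$, being the conjugate image of a Klein 4-group, is itself abelian, so in particular $i_1$ and $k$ commute.

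Next apply the same trick to the pair $(i_2,k)$ via $t_2=i_2 k$: one gets $i_2^{u_2}=k$, equivalently $k^{u_2^{-1}}=i_2$. Now compute the action of $x=gu_1^{-1}u_2^{-1}$ on the three involutions of $V$. For $i_3$: $i_3^x=(i_3^{gu_1^{-1}})^{u_2^{-1}}=k^{u_2^{-1}}=i_2$. For $i_2$: $i_2^x=(i_2^{gu_1^{-1}})^{u_2^{-1}}=i_1^{u_2^{-1}}$; here the point is that $u_2\in\langle t_2\rangle\leqslant\langle i_2,k\rangle$, and $i_1$ commutes with $i_2$ (inside $V$) and with $k$ (inside $V^{h_1}$), hence $u_2$ centralises $i_1$, giving $i_2^x=i_1$. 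Finally $i_1^x=(i_2 i_3)^x=i_2^x i_3^x=i_1 i_2=i_3$ by the Klein 4 relation.

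Hence $x$ induces the 3-cycle $(i_1\,i_3\,i_2)$ on $V\setminus\{1\}$, so $x\in N_G(V)$. For the final clause, the standard fact in $\psl_2(q)$ is that a Klein 4-subgroup is self-centralising, so $N_G(V)/C_G(V)$ embeds into $\operatorname{Aut}(V)\cong\Sym_3$ and $N_G(V)$ itself embeds into $\Sym_4$; in $\Sym_4$ every element inducing a 3-cycle on $V$ has order exactly $3$. The main obstacle I anticipate is precisely the step $i_2^x=i_1$: one has to notice that $i_1$ commutes with $k$, and the cleanest way to see this is to recognise the conjugate image $V^{h_1}$ as a Klein 4-group rather than to chase products of involutions directly.
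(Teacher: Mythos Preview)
Your proof is correct and follows essentially the same route as the paper's: both arguments reduce to the Bray identities $i_1^{u_1}=j_2$ and $i_2^{u_2}=k$, then verify $i_2^x=i_1$ by showing that $u_2$ centralises $i_1$, and finish with $i_1^x=i_3$ via the Klein 4 relation. The only real difference is packaging: where the paper establishes $[k,i_1]=1$ by the conjugation chain $k\in C_G(i_1)\Leftrightarrow i_3^g\in C_G(j_2)\Leftrightarrow i_3\in C_G(i_2)$, you obtain the same commutation in one stroke by noting that $i_1=i_2^{h_1}$ and $k=i_3^{h_1}$ both lie in the Klein 4-group $V^{h_1}$. Your treatment of the final clause (self-centralising $V$, hence $N_G(V)\hookrightarrow\Sym_4$, hence $x^3$ is a fixed point of the $3$-cycle on $V$ and so equals $1$) is also sound; the paper simply asserts this part.
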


\begin{proof}
Observe first that $i_1^{u_1}=j_2$ and $i_2^{u_2}=k$. Then, since $k=i_3^{gu_1^{-1}}$, we have $i_2^{u_2}=i_3^{gu_1^{-1}}$. Hence $i_2=i_3^{gu_1^{-1}u_2^{-1}}=i_3^x$. Now, we prove that $i_2^x=i_1$. Since $i_2^g=j_2$ and  $j_2^{u_1^{-1}}=i_1$, we have $i_2^x=i_2^{gu_1^{-1}u_2^{-1}}=i_1^{u_2^{-1}}$. We claim that $t_2 \in C_G(i_1)$, which implies that $u_2 \in C_G(i_1)$, so $i_2^x=i_1^{u_2^{-1}}=i_1$. Now, since $i_2 \in C_G(i_1)$, $t_2=i_2k\in C_G(i_1)$ if and only if $k =i_3^{gu_1^{-1}}\in C_G(i_1)$. Recall that $i_1^{u_1}=j_2$. Therefore $k \in C_G(i_1)$ if and only if $i_3^g \in C_G(j_2)=C_G(i_2^g)$, equivalently, $i_3 \in C_G(i_2)$ and the claim follows. It is now clear that $i_1^x=i_3$ since $i_1i_2=i_3$.
\end{proof}

\begin{lemma}\label{elt:3:prob}
Let $t_1$ and $t_2$ be as in Lemma {\rm \ref{elt:3}}. Then the probability that $t_1$ and $t_2$ have odd orders is bounded from below by $1/2-1/2q$.
\end{lemma}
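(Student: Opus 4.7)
The plan is to bound the joint probability as $\mathbb{P}(t_1\text{ odd})\cdot\mathbb{P}(t_2\text{ odd}\mid t_1\text{ odd})$, with the first factor bearing essentially the entire $1/2-1/(2q)$ budget through a direct count and the second factor coming out of Theorem~\ref{dist}. Since $\qpone$, the group $G=\psl_2(q)$ has a single conjugacy class of involutions, of size $N_2 = q(q+1)/2$, so $j_2 = i_2^g$ is uniformly distributed over these $N_2$ involutions.

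For the $t_1$ factor, I would set $c = i_1 j_2$ and use the automatic inversion $i_1 c i_1 = j_2 i_1 = c^{-1}$, which puts the involutions $j_2$ in bijection with the elements of $G$ inverted by $i_1$. Each such $c$ is either the identity, a non-trivial unipotent (of odd order $p$), a non-trivial semisimple element in a torus of order $(q+1)/2$ inverted by $i_1$ (of odd order, since $(q+1)/2$ is odd when $\qpone$), or a semisimple element in a torus of order $(q-1)/2$ inverted by $i_1$ (a non-negative extra contribution I ignore). The first three categories contribute at least $1+2(q-1)+(q-1)^2/4$ favourable $j_2$: the $2(q-1)$ comes as in the proof of Lemma~\ref{uni:prop:sl2}, and the $(q-1)^2/4$ follows from a standard double count showing that exactly $(q-1)/2$ tori of order $(q+1)/2$ are inverted by $i_1$, each carrying $(q-1)/2$ non-identity elements. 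Dividing by $N_2$ and clearing denominators, the desired inequality $\mathbb{P}(t_1\text{ odd})\geq(q-1)/(2q)=1/2-1/(2q)$ reduces to $(q-1)^2+8(q-1)+4\geq q^2-1$, i.e.\ $6q\geq 2$, which is immediate.

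For the conditional factor, I would apply Theorem~\ref{dist}(1) after the change of variable $g'=g_{21}^{-1}g$, where $g_{21}$ is any fixed element satisfying $i_2^{g_{21}}=i_1$; then $t_1=i_1\cdot i_1^{g'}$ and $h=gu_1^{-1}=g_{21}\cdot\zeta_1(g')^{-1}$ becomes uniformly distributed over the coset $g_{21}C_G(i_1)$, conditional on $t_1$ being of odd order. Because $C_G(i_1)$ is dihedral of order $q-1$ and $V\subset C_G(i_1)$, the computation of $t_2 = i_2 \cdot i_3^h$ takes place inside this familiar subgroup, and a parallel counting of favourable $h$ (classifying $i_3^h$ by the conjugacy classes of involutions of $C_G(i_1)$, and appealing to the structure of $N_G(V)/V$) bounds $\mathbb{P}(t_2\text{ odd}\mid t_1\text{ odd})$ from below by an expression that is absorbed into the $1/(2q)$ slack of the stated bound.

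The main obstacle is this conditional $t_2$ step: $t_2=i_2\cdot i_3^h$ is supported in a relatively small product set inside the dihedral centraliser, and a careless count interacts with the $2$-adic valuation of $(q-1)/2$; the slack $1/(2q)$ exists precisely to accommodate this $2$-Sylow loss, which does not arise from the unipotent and non-split-torus contributions that already dominate the $t_1$ count asymptotically.
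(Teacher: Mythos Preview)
The paper's proof is quite different and much shorter. It does not factor the probability into a marginal times a conditional; instead it observes that, since all involutions in $\psl_2(q)$ are conjugate, $t_1=i_1\cdot i_2^g$ (and, the paper asserts, likewise $t_2$) has the same distribution as the product of two uniformly random involutions. That single probability is then bounded from below by counting only those ordered pairs of involutions whose product lies in a torus of the odd order $a\in\{(q\pm1)/2\}$: there are $bq$ such tori and $a^2$ pairs per torus, against $(aq)^2$ pairs in all, giving exactly $b/q=(q-1)/(2q)$ where $b$ is the even member of $\{(q\pm1)/2\}$. So the paper is really bounding each of $\mathbb{P}(t_1\text{ odd})$ and $\mathbb{P}(t_2\text{ odd})$ separately, not the joint event. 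Your more elaborate $t_1$-count (adding the unipotent and identity contributions) is correct but unnecessary for this purpose.

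Your plan for the \emph{joint} event has a genuine gap at the conditional step, and in fact the joint bound you aim for is false when $q\equiv 1\pmod 8$. The reduction via Theorem~\ref{dist} is fine: conditional on $t_1$ having odd order, $h=gu_1^{-1}$ is uniform on the coset $\{x:i_2^x=i_1\}$, and (as in the proof of Lemma~\ref{elt:3}) $k=i_3^h$ lies in the dihedral group $C_G(i_1)$ of order $q-1$. But if you take as coset representative an element of order~$3$ in $N_G(V)$, you find $i_3^{h_0}=i_2$; hence $k$ is uniform on the $C_G(i_1)$-conjugacy class of $i_2$, and $t_2=i_2k$ is uniform on the index-$2$ subgroup of the cyclic rotation group of order $(q-1)/2$. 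The conditional probability that $t_2$ has odd order is therefore exactly $2^{1-e}$, where $2^e$ is the exact power of $2$ dividing $(q-1)/2$. For $q\equiv 1\pmod{8}$ this is at most $1/2$ (for $q=17$ it is $1/4$), so the joint probability is of order $2^{-e}$, not $1/2-1/(2q)$; there is no ``$1/(2q)$ slack'' that can absorb a constant-factor loss. Thus a product decomposition cannot reach the stated bound for the joint event, and the lemma should be read---as the paper's own proof does---as a bound on each marginal.
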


\begin{proof}
Notice that all involutions in $G\cong \psl_2(q)$ are conjugate. Therefore the probability that $t_1$ and $t_2$ have odd orders is the same as the probability of the product of two random involutions from $G$ to be of odd order.

We denote by $a$ one of these numbers $(q\pm 1)/2$ which is odd and by $b$ the other one. Then $|G|=q(q^2-1)/2=2abq$ and $|C_G(i)|=2b$ for any involution $i\in G$. Hence the total number of involutions is
\[
\frac{|G|}{|C_G(i)|}=\frac{2abq}{2b}=aq.
\]

Now we shall compute the number of pairs of involutions $(i,j)$ such that their product $ij$ belongs to a torus of order $a$. Let $T$ be a torus of order $a$. Then $N_G(T)$ is a dihedral group of order $2a$. Therefore the involutions in $N_G(T)$ form the coset $N_G(T)\backslash T$ since $a$ is odd. Hence, for every torus of order $a$, we have $a^2$ pairs of involutions whose product belong to $T$. The number of tori of order $a$ is $|G|/|N_G(T)|=2abq/2a=bq$. Hence, there are $bqa^2$ pairs of involutions whose product belong to a torus of order $a$. Thus the desired probability is
\[
\frac{bqa^2}{(aq)^2}=\frac{b}{q}\geqslant \frac{q-1}{2q}=\frac{1}{2}-\frac{1}{2q}.
\]
\end{proof}

\begin{remark}\label{rem:torus:order}
{\rm
An important part of our algorithm is to find a generator of a torus $T$ of order $(q\pm 1)/2$ in $\psl_2(q)$. However, since finding the exact order of an element involves factorization of integers into primes, we consider the elements $t \in T$ where the order of $t$ is divisible by some primitive prime divisor of $(q\pm 1)/2$. On the other hand, by \cite[I.8]{mitrinovic1996}, the proportion of the elements of order $(q\pm 1)/2$ in $T$ is $O(1/\log \log q)$.

A prime number $r$ is said to be a primitive prime divisor of $p^k-1$  if $r$ divides $p^k-1$ but not $p^i-1$ for $1\leqslant i<k$. By \cite{zsigmondy92.265}, there exists a primitive prime divisor of $p^k-1$ except when $(p,k)=(2,6)$, or $k=2$ and $p$ is a Mersenne prime. Observe that the all of the primitive prime divisors of $p^{2k}-1$ divide $p^k+1$. Therefore the primitive prime divisors of $p^k+1$ are defined to be the primitive prime divisors of $p^{2k}-1$. Here, we shall note that the Mersenne primes which are less than 1000 are 3, 7, 31, 127.

For the practical purposes of our algorithms, we shall be dealing with small primes, for example the primes less than 1000. Assume that $\qpone$ and $q=p^k$ for some prime $p$. If $k=1$, then we can assume that the factorization into primes is possible and we can check whether a given element has order $(p\pm 1)/2$. If $k\geqslant 2$ and $q$ is big, then we can not use factorization of integers into primes to find exact orders of elements. In this case, if $\ppone$, then we look for an element $t \in T$ which satisfies
\[t^{p^k-1}=1,\quad t^{\prod_{i=1}^{k-1}(p^i-1)} \neq 1.\]
Moreover, we also need that the element $t^{\frac{p^k-1}{p-1}}$ has order $(p-1)/2$. If $\pmone$, then we look for the elements $t\in T$ satisfying
\[ t^{p^k+1}=1,\quad t^{\prod_{i=1}^{2k-1}(p^i-1)} \neq 1\] and the element $t^{\frac{p^k+1}{p+1}}$ has order $(p+1)/2$. Note that the prime factorization of $p\pm 1$ can be computed in $O(p)$ time. It follows from \cite[Lemma 2.6]{kantor01.168} that there exists a primitive prime divisor of $p^k \pm 1$ which divides the order of $t$.
}
\end{remark}

\begin{proof}[Proof of Theorem \ref{cons:uni}]
Let $G$ be a simple black box group of Lie type of odd characteristic $p$. Assume first that $p\neq 5,7$. If $G \ncong \psl_2(q)$ or $\2G2(q)$, then we construct a long root $\sl_2(q)$-subgroup $L$ of $G$ by \cite[Theorem 1.1]{suko02}. Now by Lemmas \ref{the:killer}, \ref{elt:3}, \ref{elt:3:prob} and Remark \ref{rem:the:killer}, we construct a subgroup $K\leqslant L\cong \sl_2(q)$ where $K\cong \sl_2(p)$. By \cite{guralnick01.169}, the proportion of the unipotent elements in $K$ is $O(1/p)$. Therefore, we can find a unipotent element from randomly chosen $O(p)$ elements in $K$. If $G\cong \/\2G2(q)$, then $C_G(i)'\cong \psl_2(q^2)$ for any involution $i\in G$. Therefore, by the same arguments above, we can construct unipotent elements in $\2G2(q)$ and $\psl_2(q)$.

Assume now that $p=5$ or $7$. If $q=p^k$ and $k$ has small prime divisor $r$. Then, again by the arguments above, we can construct a subgroup isomorphic to $\ppsl_2(p^r)$ and perform random search in this subgroup to construct a unipotent element. In this case the probability of finding a unipotent element is $O(1/p^r)$.
\end{proof}

\begin{algorithm}\label{algorithm:psl2} Let $G$ be a black box group isomorphic to $\psl_2(q)$ where $\qpone$ and $q=p^k$.
\begin{itemize}
\item[Input:]
\begin{itemize}
\item[$\bullet$] A set of generators of $G$.
\item[$\bullet$] The characteristic $p$ of the underlying field.
\item[$\bullet$] An exponent $E$ for $G$.
\end{itemize}
\item[Output:]
\begin{itemize}
\item[$\bullet$] A root subgroup $U$;
\item[$\bullet$] The maximal torus $T$ normalizing $U$;
\item[$\bullet$] A Weyl group element $w$ where $U^w$ is the opposite root subgroup of $U$.
\end{itemize}
\end{itemize}
\end{algorithm}

Outline of Algorithm \ref{algorithm:psl2} (a more detailed description follows below):
\begin{itemize}
\item[1.] Find the size of the field $q$.
\item[2.] Construct a Klein 4-group $V=\langle i,j\rangle$ in $G$ together with the torus $T$ where $i \in T$ and $j$ inverts $T$.
\item[3.] Construct an element of order 3 in $N_G(V)$.
\item[4.] Construct $H\cong \psl_2(p)$ or $\psl_2(p^2)$ if $\ppone$ or $\pmone$, respectively.
\item[5.] Construct a unipotent element $u \in H$ of the form $u = ii^h$ for $h  \in H$ and conclude that the torus $T$ which contains $i$ is a subgroup of $N_G(U)$ where $U$ is the root subgroup containing $u$ and $j$ is the corresponding Weyl group element.
\end{itemize}

Now we give a more detailed description of Algorithm \ref{algorithm:psl2}.

\bd
\item[{\bf Step 1:}] We compute the size $q$ of the underlying field by Algorithm 5.5 in \cite{yalcinkaya07.171}.\\

\item[{\bf Step 2:}] Let $E=2^km$ where $(2,m)=1$. Take an arbitrary element $g\in G$. If the order of $g$ is even, then the last non-identity element in the following sequence is an involution
\[
1\neq g^m, \, g^{2m},\, g^{2^2m}, \ldots, g^{2^km}=1.
\]
Note that the probability of finding an element of even order in the groups of Lie type of odd characteristic is at least $1/4$ by \cite[Corollary 5.3]{isaacs95.139}. Let $i\in G$ be an involution constructed as above. Then, we construct $C_G(i)$ by the method described in \cite{borovik02.7, bray00.241} together with the result in \cite{parker10.885}. We have $C_G(i) = T\rtimes \langle w \rangle$ where $T$ is a torus of order $(q-1)/2$ and $w$ is an involution which inverts $T$. We follow the arguments in Remark \ref{rem:torus:order} to find a toral element $t \in T$ where $|t|$ is divisible by $(p-1)r$ if $\ppone$, or $(p^2-1)r$ if $\pmone$ where $r$ is a primitive prime divisor of $(q-1)$. Note that $t$ has order $(q-1)/2$ with probability at least $O(1/\log \log q)$. Note also that the coset $Tw$ consists of involutions inverting $T$. Hence we can find an involution $j \in C_G(i)$ which inverts $T$ with probability at least $1/2$. Now it is clear by the construction that $V=\langle i,j \rangle$ is a Klein 4-group, $i\in T$ and $j$ inverts $T$.\\

\item[{\bf Step 3:}] Let $i_1=i$, $i_2=j$, $i_3=i*j$. Then we search for an element $g \in G$ such that $t_1:=i_1i_2^g$ has odd order $m_1$ and $t_2:=i_2i_3^{gu_1^{-1}}$ has odd order $m_2$ where $u_1=t_1^{\frac{m_1+1}{2}}$. By Lemma \ref{elt:3:prob}, we can find such element $g \in G$ with probability at least $1/2 -1/2q$. Now, by Lemma \ref{elt:3}, $x:=gu_1^{-1}u_2^{-1}\in N_G(V)$ has order 3, where $u_2=t_2^{\frac{m_2+1}{2}}$.\\

\item[{\bf Step 4:}] Let $T=\langle t \rangle$ be the torus constructed in Step 2 and $x$ the element of order 3 constructed in Step 3. By Lemma \ref{the:killer}, if $\ppone$, then $H=\langle t',x \rangle \cong \psl_2(p)$ where $t' \in T$, $|t'|=(p-1)/2$.  If $\pmone$, then $H=\langle t',x \rangle \cong \psl_2(p^2)$ where $t'\in T$ and $|t'|=(p^2-1)/2$.\\

\item[{\bf Step 5:}] Notice first that $i \in H$. Assume that $H\cong \psl_2(p)$ and $\ppone$. Then, by Lemma \ref{uni:prop:sl2}, we can find an element $g\in H$ with probability at least $1/p$ such that $u=ii^g$ is a unipotent element in $H$. Since $i \in T$ (see Step 2), by Lemma \ref{uni:sl2}, $T<N_G(U)$ where $U$ is the subgroup containing $u$. Note that we can construct the root subgroup $U$ by Lemma \ref{uni:gen:sl2}. Moreover, by Remark \ref{rem:weyl}, the element $j$ constructed in Step 2 is the corresponding Weyl group element.

If $\pmone$, then, in Step 4, we construct $H \cong \psl_2(p^2)$. Following the same arguments above, we construct a unipotent element of the form $u=ii^g$ for some $g \in H$ with probability at least $1/p^2$ and the rest of the construction is the same as above.
\ed

Notice that algorithms described in this and previous section provide a proof of Theorems~\ref{cons:psl2B} and \ref{cons:psl2}.

\section{Construction of a maximal split torus}\label{sec:torus}

Let $G$ be a quasi-simple classical black box group of odd characteristic isomorphic to $\ppsl_{n+1}(q)$, $\ppsp_{2n}(q)$, $\ppom_{2n+1}(q)$ or $\ppom_{2n}^+(q)$. Assume that  $\{K_0, K_1, \ldots,K_n \}$ is an extended Curtis-Tits configuration of $G$.

In this section, for any odd $q>3$, we describe a method constructing the split tori $T_\ell < K_\ell$, $\ell=0,1,\ldots,n$, which all together generate a maximally split torus $$T =\langle T_k \mid k=1,2, \ldots, n \rangle$$ in $G$ normalizing $K_\ell$ for all $\ell \in \{0,1,\ldots,n\}$. We set that $K_0$ is the root $\sl_2(q)$-subgroup of $G$ corresponding to the extra node in the extended Dynkin diagram of $G$.

Note that an extended Curtis-Tits configuration of $G$ can be constructed by using the algorithm in \cite{suko03} except that $G\cong \ppsp_{2n}(q)$ and $\qmone$. Therefore, we assume that $\qpone$ if $G\cong \ppsp_{2n}(q)$.

\subsection{Groups of type $A_n$}\label{subsec:torus:an}
Assume that $G\cong \ppsl_{n+1}(q)$, $q>3$, $n\geqslant 2$. Note that, for each $\ell=0,1,\dots,n$ (see Figure~\ref{ean}), $K_\ell\cong \sl_2(q)$. Assume that $i_\ell \in K_\ell$ is the unique involution for each $\ell=0,1, \dots, n$.

We set
\begin{itemize}
\item $T_0=C_{K_0}(i_1)=C_{K_0}(i_n)$,
\item $T_1=C_{K_1}(i_2)$,
\item $T_\ell = C_{K_\ell}(i_{\ell-1})$, $\ell=2, \ldots,n$.
\end{itemize}

\begin{lemma}\label{torus:order}
We have $|T_\ell|=q-1$ for each $\ell=0,1,2, \ldots,n$.
\end{lemma}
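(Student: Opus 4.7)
The plan is to reduce everything to a direct matrix computation inside the rank-$2$ subgroup generated by two adjacent nodes of the extended Dynkin diagram. By condition (c) of the Curtis-Tits configuration (Section~\ref{subsec:CTC-single-bond}), any edge of the extended diagram of type $A_n$ gives $\langle K_\alpha, K_\beta\rangle \cong \sl_3(q)$, inside which $K_\alpha$ and $K_\beta$ appear as the two standard fundamental root $\sl_2$-subgroups. In the standard realisation of $\sl_3(q)$, with $K^{(1)}$ in rows/columns $\{1,2\}$ and $K^{(2)}$ in rows/columns $\{2,3\}$, the central involution of $K^{(1)}$ is $\mathrm{diag}(-1,-1,1)$, and a one-line calculation shows that its centraliser inside $K^{(2)}$ consists exactly of the matrices $\mathrm{diag}(1,a,a^{-1})$, $a\in \GF(q)^*$, i.e.\ the split diagonal torus of order $q-1$; the symmetric statement holds with the roles of $K^{(1)}$ and $K^{(2)}$ exchanged.

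Applied to the pair $(K_1,K_2)$, this computation yields both $|T_1| = |C_{K_1}(i_2)| = q-1$ and $|T_2| = |C_{K_2}(i_1)| = q-1$; applied to the pairs $(K_{\ell-1}, K_\ell)$ for $\ell=3,\dots,n$, it yields $|T_\ell| = q-1$. For $\ell=0$, since $K_0$ is joined to both $K_1$ and $K_n$ in the extended Dynkin diagram of type $A_n$, we have $\langle K_0, K_1\rangle \cong \sl_3(q) \cong \langle K_0, K_n\rangle$, and the same $\sl_3(q)$-computation gives $|C_{K_0}(i_1)| = |C_{K_0}(i_n)| = q-1$.

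It remains to establish the equality $C_{K_0}(i_1) = C_{K_0}(i_n)$ implicit in the definition of $T_0$. Here the cleanest route is to invoke the standard matrix realisation of $G\cong \ppsl_{n+1}(q)$: $K_0$ is the long-root $\sl_2$-subgroup in rows/columns $\{1, n+1\}$, while $i_1 = \mathrm{diag}(-1,-1,1,\dots,1)$ and $i_n = \mathrm{diag}(1,\dots,1,-1,-1)$. A direct check then shows that conjugation by $i_1$ and by $i_n$ induces the \emph{same} involutive automorphism of $K_0$ (each negates the two off-diagonal entries at positions $(1,n+1)$ and $(n+1,1)$ and fixes the two diagonal entries); equivalently, the product $i_1 i_n = \mathrm{diag}(-1,-1,1,\dots,1,-1,-1)$ lies in $C_G(K_0)$. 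Hence $C_{K_0}(i_1) = C_{K_0}(i_n)$, and both equal the split torus of $K_0$ of order $q-1$, completing the proof.

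The main obstacle is precisely this final coincidence $C_{K_0}(i_1)=C_{K_0}(i_n)$: the estimates on the centraliser orders follow at once from the abstract Curtis-Tits axioms (a)--(c) and the $\sl_3(q)$-calculation, but the fact that the two centralisers genuinely coincide is not transparent from axioms (a)--(g) alone and seems to require either the explicit matrix realisation in $\sl_{n+1}(q)$ or an equivalent observation about the root datum at the extra node $-\alpha_0$ of the extended Dynkin diagram.
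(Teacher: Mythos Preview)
Your proof is correct and follows the same underlying route as the paper: both arguments work inside the rank-$2$ subgroup $\langle K_{\ell-1}, K_\ell\rangle \cong \sl_3(q)$ attached to an edge of the extended diagram and identify $C_{K_\ell}(i_{\ell-1})$ with the split diagonal torus of $K_\ell$. The paper compresses this step into the single phrase ``$i_{\ell-1}$ acts as an involution of type $t_1$ on $K_\ell$'' and dispatches the remaining indices with ``the other cases are analogous''; you instead carry out the $\sl_3(q)$-matrix computation explicitly, and you go further than the paper by actually justifying the coincidence $C_{K_0}(i_1)=C_{K_0}(i_n)$ via the matrix model of $\sl_{n+1}(q)$. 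Your caution about that last point is well placed---the paper never isolates it as a separate claim---though note that it can also be read off the abstract configuration: by axioms (d)--(f), $H_0 = H\cap K_0$ is a maximal split torus of $K_0$ centralised by every $i_k$, hence contained in both $C_{K_0}(i_1)$ and $C_{K_0}(i_n)$, and since the latter two are themselves tori of order $q-1$ by your $\sl_3(q)$-computation, all three coincide. Either justification is fine.
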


\begin{proof}
Recall that the involutions $i_\ell$, $\ell=0,1,\ldots,n$, mutually commute with each other. Observe that $i_k \in N_G(K_\ell)$ for all $k,\ell=0,1,\ldots,n$ and $i_{\ell-1}$ acts as an involution of type $t_1$ on $K_{\ell}$ for $\ell =2, \ldots,n$. Hence $|T_\ell|=|C_{K_\ell}(i_{\ell-1})|=q-1$ for $\ell=2,3,\ldots,n$. The other cases are analogous.
\end{proof}

\begin{lemma}\label{nor:tor:an}
The subgroup $\langle T_0,T_1, \ldots, T_n\rangle$ is a maximally split torus normalizing $K_\ell$ for each $\ell=0,1,\ldots,n$. In particular, $\langle T_0,T_1, \ldots, T_n\rangle=\langle T_1, \ldots, T_n\rangle$.
\end{lemma}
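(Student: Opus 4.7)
The natural approach is to identify each $T_\ell$ with the torus $H_\ell = H \cap K_\ell$ supplied by the Curtis-Tits configuration, where $H = C_X(E)$ is the abelian subgroup from condition (e). Once this identification is in place, the three conclusions of the lemma --- that the join is a maximal split torus, that it normalizes every $K_\ell$, and that $T_0$ is redundant --- drop straight out of conditions (f) and (g).

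The identification itself is where the real work lies. The central involution $i_{\ell-1} = z_{\ell-1}$ of $K_{\ell-1}$ lies in $E$ by construction, so $H$, being the centralizer of $E$, commutes with $i_{\ell-1}$. Restricting to $K_\ell$ gives
\[
H_\ell \;=\; H \cap K_\ell \;\leqslant\; C_{K_\ell}(i_{\ell-1}) \;=\; T_\ell.
\]
By condition (f), $H_\ell$ is a maximal split torus of $K_\ell \cong \sl_2(q)$, hence has order $q-1$, which matches the order of $T_\ell$ given by Lemma \ref{torus:order}. Hence $H_\ell = T_\ell$ for $\ell = 1,\dots,n$. For $\ell = 0$ the same argument with $i_1$ (respectively $i_n$) shows that $H_0$ is contained in both $C_{K_0}(i_1)$ and $C_{K_0}(i_n)$, and a common order count forces all three subgroups to coincide; this incidentally verifies the equality $C_{K_0}(i_1) = C_{K_0}(i_n)$ that is tacit in the definition of $T_0$.

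With $T_\ell = H_\ell$ established, condition (f) immediately gives
\[
\langle T_1,\dots,T_n\rangle \;=\; \langle H_1,\dots,H_n\rangle \;=\; H,
\]
a maximal split torus. Since $T_0 = H_0 \leqslant H$, adjoining $T_0$ does not enlarge the group, proving the \emph{in particular} clause. Condition (g) says that $H$ normalizes every $K_\alpha$ with $\alpha \in \Delta^*$, so in particular it normalizes each $K_\ell$, $\ell=0,1,\dots,n$.

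The only potential obstacle is conceptual rather than technical: one must notice that the slightly ad hoc centralizer recipe for $T_\ell$ picks out precisely the maximal tori of the $K_\ell$ already sitting inside $H$. The moment one observes that each $i_{\ell-1}$ belongs to the generating set of $E$, the proof collapses to a comparison of orders.
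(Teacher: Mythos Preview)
Your proof is correct, and it takes a genuinely different route from the paper's own argument. The paper does \emph{not} identify $T_\ell$ with $H_\ell = H\cap K_\ell$; instead it argues directly inside the rank~$2$ subgroups $\langle K_{\ell-1},K_\ell\rangle \cong \sl_3(q)$, using the fact that $C_L(i_\ell)=N_L(K_\ell)$ to show that $T_{\ell-1}$ normalizes $K_\ell$ and hence that $C_{K_\ell}(T_{\ell-1})$ is a torus, which is then matched with $T_\ell$. Pairwise commutativity and the normalization statements are built up case by case from these local computations, and the redundancy of $T_0$ is obtained from $T_0\leqslant C_G(\langle T_1,\dots,T_n\rangle)$.

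Your approach is more structural: by observing that every $i_j=z_j$ lies in $E$, you get $H_\ell\leqslant T_\ell$ for free, and an order comparison via Lemma~\ref{torus:order} finishes the identification. Conditions (f) and (g) then deliver all three conclusions at once. This is cleaner and shorter, and it makes transparent why the ad~hoc centralizer recipe for $T_\ell$ works---it is simply recovering $H_\ell$. The paper's hands-on computation, by contrast, is self-contained in the sense that it does not appeal back to the packaged properties (e)--(g) of the Curtis--Tits configuration, and it exhibits explicitly the rank~$2$ mechanism (inside $\sl_3(q)$) that makes adjacent tori commute; this local argument is what gets recycled in the $B_n$, $C_n$, $D_n$ cases. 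One small point: your claim that $\langle H_1,\dots,H_n\rangle=H$ is ``a maximal split torus'' is not literally part of condition (f); it follows from Theorem~\ref{th:single-bond}, and you might cite that explicitly.
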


\begin{proof}
By Lemma \ref{torus:order}, we have $|T_\ell|=q-1$ for each $\ell=0,1, \ldots, n$. We shall prove that $T_\ell$'s are mutually commuting with each other. We prove that $[T_1,T_2]=1$ and the other cases are treated similarly. Consider $L=\langle K_1,K_2\rangle \cong \sl_3(q)$. Then, by \cite[Theorem 4.5.5 (c)]{gorenstein1998}, $C_L(i_2)=N_L(K_2)$. Therefore $T_1=C_{K_1}(i_2)$ normalizes $K_2$. Thus $C_{K_2}(T_1)$ is a torus in $K_2$. Since $C_{K_2}(i_1)$ is also a torus in $K_2$ we must have $T_2=C_{K_2}(i_1)=C_{K_2}(T_1)$. Thus $[T_1,T_2]=1$. In a similar manner, we have $[T_k,T_\ell]=1$ and  $T_k \leqslant N_G(K_\ell)$ for each $k,\ell=0,1, \ldots, n$ so $T=\langle T_1, \ldots T_n\rangle$ is a maximally split torus normalizing $K_\ell$ for each $\ell=0,1, \ldots, n$. Since $T_0$ commutes with $T_\ell$ for each $\ell=1,\ldots,n$, we have $T_0 \leqslant C_G(\langle T_1, \dots,T_n \rangle)= \langle T_1, \ldots,T_n \rangle$.
\end{proof}

\subsection{Groups of type $C_n$}

Assume that $G\cong \psp_{2n}(q)$, $n\geqslant 2$, $\qpone$. Let  $\Sigma=\{K_0, K_1, \ldots,K_n \}$ be an extended Curtis-Tits configuration of $G$. By Remark \ref{rem:bncn}, we can take an involution $j\in G$ (necessarily of type $t_n$) such that $C_G(j)=LD$ where $L=\langle K_1, \ldots,K_{n-1}\rangle=C_G(j)''\cong \frac{1}{(2,n)}\sl_n(q)$ and $D$ is a dihedral group of order $2(q-1)$. Note that if $G\cong \sp_{2n}(q)$, then $j$ is a pseudo-involution, $L\cong \sl_n(q)$ and $D$ is a torus of order $q-1$.
Note also that if $G\cong \psp_4(q)$, then $K_1\cong \psl_2(q)$, $K_0\cong K_2\cong \sl_2(q)$ and in all the other cases $K_\ell \cong \sl_2(q)$ for each $\ell =0,1,\ldots,n$.

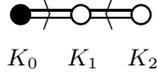
\begin{figure}[htbp]
\begin{center}
\setlength{\unitlength}{.025cm}
\begin{picture}(70,80)(0,30)
{\huge
\put(2,60){$\bullet$}
\put(2,40){\small $K_0$}
\put(14.4,69.5){\line(1,0){22}}
\put(20.5,63.5){\large $\rangle$}
\put(14.4,65.5){\line(1,0){22}}
\put(34,60){$\circ$}
\put(34,40){\small $K_1$}
\put(46,65.5){\line(1,0){22}}
\put(52.5,63.5){\large $\langle$}
\put(46,69.5){\line(1,0){22}}
\put(66,60){$\circ$}
\put(66,40){\small $K_2$}
}
\end{picture}
\caption{Extended Dynkin diagram of $C_2$}
\label{ec2}
\end{center}
\end{figure}

\begin{remark}\label{j:normal}
{\rm
Since $\{K_0, K_1, \ldots, K_n\}$ is a Curtis-Tits configuration of $G\cong \ppsp_{2n}(q)$, the element $j\in G$ chosen above has the property that $j \in N_G(K_i)$ for all $i=0,1,\ldots,n$.
}
\end{remark}

We set
\begin{itemize}
\item $T_0=C_{K_0}(j)$ and $T_n=C_{K_n}(j)$ where $j$ is as above.
\item If $n=2$, then $T_1$ is the cyclic subgroup of order $(q-1)/2$ in $C_{K_1}(i_0)$ where $i_0\in Z(K_0)$.
\item If $n\geq 3$, then $T_1, \ldots,T_{n-1}$ are as described in Subsection \ref{subsec:torus:an}.
\end{itemize}




\begin{lemma}\label{nor:tor:cn}
Let $G\cong \ppsp_{2n}(q)$ and $\qpone$. Then $\langle T_0,T_1,\ldots,  T_2 \rangle$ is a maximally split torus normalizing $K_\ell$ for each $\ell=0,1,\ldots,n$. In particular, $\langle T_0,T_1,\ldots T_n \rangle=\langle T_1, \ldots T_n \rangle$
\end{lemma}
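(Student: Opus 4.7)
The plan is to adapt the argument of Lemma \ref{nor:tor:an} to the double-bond setting, handling the boundary tori $T_0$ and $T_n$ which are defined by a different recipe (centralising $j$ rather than an adjacent central involution). I would split the proof into the cases $n=2$ and $n\geqslant 3$, the latter bootstrapping off Lemma \ref{nor:tor:an} applied to $L=\langle K_1,\ldots,K_{n-1}\rangle\cong \frac{1}{(2,n)}\sl_n(q)$.

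For $n\geqslant 3$, first apply Lemma \ref{nor:tor:an} inside $L$ to conclude that $T_L=\langle T_1,\ldots,T_{n-1}\rangle$ is a maximal split torus of $L$ normalising each $K_i$ for $i=1,\ldots,n-1$. Next, show that $T_0=C_{K_0}(j)$ and $T_n=C_{K_n}(j)$ are split tori of order $q-1$ in $K_0$ and $K_n$ respectively: this uses Remark~\ref{j:normal} (so that $j$ normalises $K_0$ and $K_n$) together with the fact that, being of type $t_n$, the involution $j$ induces on $K_0$ and $K_n$ conjugation by a non-central toral element, whose centraliser is the split torus. Then verify pairwise commutation of the $T_\ell$'s: $[T_0,T_i]=1$ and $[T_n,T_i]=1$ for the ``distant'' indices $i$ (i.e.\ $i=2,\ldots,n-2$, and also $[T_0,T_n]=1$) follow from condition~(b) of a Curtis-Tits configuration since the corresponding $K$'s are non-adjacent in $\Delta^*$. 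The only nontrivial pairs are $(T_0,T_1)$ and $(T_{n-1},T_n)$: for these, work inside $\langle K_0,K_1\rangle\cong \ppsp_4(q)$ (respectively $\langle K_{n-1},K_n\rangle$), where $T_0$ and $T_1$ are the split tori of the two root $\sl_2$'s of a $C_2$-system and hence commute, generating its maximal split torus.

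Having established mutual commutation, a standard order count shows that $T=\langle T_0,\ldots,T_n\rangle$ has order $(q-1)^n/d$ (with $d=(2,n)$ for $\ppsp$), matching the order of a maximal split torus of $\ppsp_{2n}(q)$; hence $T$ is maximally split. That $T$ normalises each $K_\ell$ follows from showing each $T_k$ normalises $K_\ell$: for $k=\ell$ trivially; for non-adjacent pairs because $K_k$ and $K_\ell$ commute; and for adjacent pairs by the $\sl_3$ or $\ppsp_4$ analysis on the two-node subdiagram, exactly as in the proof of Lemma~\ref{nor:tor:an} (using \cite[Theorem 4.5.5(c)]{gorenstein1998} or its $\ppsp_4$-analogue). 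The asserted redundancy $T_0\leqslant \langle T_1,\ldots,T_n\rangle$ then follows exactly as at the end of Lemma~\ref{nor:tor:an}: since $T_0$ commutes with the maximal torus $\langle T_1,\ldots,T_n\rangle$, it lies inside its centraliser, which is the torus itself.

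The $n=2$ case I would handle separately but in the same spirit, working inside $\ppsp_4(q)=\langle K_0,K_1,K_2\rangle$ directly, with the one twist that $K_1\cong \psl_2(q)$ has split torus of order $(q-1)/2$, which is precisely the order of $T_1=C_{K_1}(i_0)$ prescribed in the setup; the verifications $[T_0,T_1]=[T_1,T_2]=[T_0,T_2]=1$ and that $T$ normalises each $K_\ell$ are then direct $C_2$-root-system calculations. The main obstacle I anticipate is the clean identification of $T_0$ and $T_n$ as the split tori of $K_0$ and $K_n$: one needs to know that $j$ acts on each by conjugation by a diagonal (non-central) element and not trivially nor as an outer/graph automorphism. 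This will rely on the classification of $j$ as a type $t_n$ involution whose centraliser's semisimple part is precisely $L=\langle K_1,\ldots,K_{n-1}\rangle$, which forces $j$ to act nontrivially on the boundary root $\sl_2$'s $K_0$ and $K_n$ through a diagonal element of the ambient $\sp$.
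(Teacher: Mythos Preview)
Your proposal is correct and follows the same architecture as the paper's proof: the same $n=2$ versus $n\geqslant 3$ case split, the same bootstrapping from Lemma~\ref{nor:tor:an} inside $\langle K_1,\ldots,K_{n-1}\rangle$, and the same closing redundancy argument $T_0\leqslant C_G(\langle T_1,\ldots,T_n\rangle)=\langle T_1,\ldots,T_n\rangle$.

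The one place worth tightening is your handling of $[T_0,T_1]=1$. Saying that in $\langle K_0,K_1\rangle\cong\ppsp_4(q)$ the groups $T_0$ and $T_1$ ``are the split tori of the two root $\sl_2$'s of a $C_2$-system and hence commute'' risks circularity: $T_0=C_{K_0}(j)$ and $T_1=C_{K_1}(i_2)$ are defined by independent recipes, and it is exactly their compatibility (i.e.\ that they sit in a common maximal split torus of $\langle K_0,K_1\rangle$) that must be established. The paper does this concretely: since $i_0\in Z(K_0)\subset T_0$, it suffices to show $C_{K_1}(i_0)=T_1$; working in $L=\langle K_0,K_1\rangle\cong\sp_4(q)$ one has $C_L(i_0)=C_L(i_2)=K_0\times\widetilde{K}_0$, so both $i_0$ and $i_2$ act as the same type-$t_1$ involution on $K_1$, giving $C_{K_1}(i_0)=C_{K_1}(i_2)=T_1$ and hence $T_1=C_{K_1}(T_0)$. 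The normalisation $T_0\leqslant N_L(K_1)$ comes from $K_1T_0\leqslant C_L(j)$. Your order-count step is an addition not in the paper (which argues structurally via $C_G(T)=T$), but it does no harm.
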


\begin{proof}
We first assume that $G\cong \psp_4(q)$. Then $K_0\cong K_2\cong \sl_2(q)$ and $K_1 \cong \psl_2(q)$, (see Figure \ref{ec2}). We shall show that $T=\langle T_0,T_1,T_2 \rangle$ is abelian. By the setting above, it is clear that $[T_0,T_2]=1$. Moreover, $i_0 \in C_G(j)$ and $i_0\in N_G(K_1)$ where $i_0\in Z(K_0)$ since $\{K_0,K_1,K_2\}$ is a Curtis-Tits configuration of $G$. Now $T_1 \leqslant C_{K_1}(i_0)=C_{K_1}(T_0)=C_{K_1}(T_2)$ since $T_0$ and $T_2$ are cyclic groups and $i_0 \in T_0 \cap T_2$. Hence $[T_1,T_0]=[T_1,T_2]=1$.

We have $T_0=C_{K_0}(j) \leqslant C_G(j)=N_G(K_1)$. Similarly, we have $T_2 \leqslant N_G(K_1)$. Observe that $C_{K_1}(i_0)' \leqslant C_G(i_0)'=N_G(K_0)=N_G(K_2)$ and $|T_1:C_{K_1}(i_0)'|=2$ so $T_1\leqslant N_G(K_0)=N_G(K_2)$. Thus, $T_0$, $T_1$ and $T_2$ normalize $K_0$, $K_1$ and $K_2$. Moreover, since $T_0$ commutes with $T_1$ and $T_2$, we have $T_0\leqslant C_G(\langle T_1,T_2 \rangle)=\langle T_1,T_2\rangle$.

Now we assume that $G\cong \sp_4(q)$ or $\ppsp_{2n}(q)$, $n\geq 3$. We first show that $\langle T_0, T_1, \ldots, T_n \rangle$ is abelian. By Lemma \ref{nor:tor:an}, observe that it is enough to show that $[T_0,T_1]=[T_{n-1},T_n]=1$. We show that $[T_0,T_1]=1$ and the other case is analogous. Consider $L=\langle K_0,K_1 \rangle \cong \sp_4(q)$. Since $\{K_0,K_1, \ldots, K_n\}$ is a Curtis-Tits configuration of $G$,  we have $C_L(i_2)=C_L(i_0)=K_0\times \widetilde{K}_0$ for some $\widetilde{K}_0\cong \sl_2(q)$, and $i_0,i_2$ act as an involution of type $t_1$ on $K_1$. Therefore $C_{K_1}(i_0)=C_{K_1}(i_2)=T_1$. Now since $j$ commutes with $K_1$ and $T_0$, we have $K_1T_0\leqslant C_L(j)$. Therefore $i_0 \in T_0 \leqslant N_L(K_1)$. It follows that $T_1 = C_{K_1}(i_0)=C_{K_1}(T_0)$. Hence $[T_0,T_1]=1$.

It is clear that $T_0$ commutes with $K_\ell$ for each $\ell=2,\ldots,n$. Therefore, since $T_0 \leqslant N_L(K_1)$, we have $T_0 \leqslant N_G(K_\ell)$ for each $\ell=2,\ldots,n$. Moreover, $T_1=C_{K_1}(i_2)=C_{K_1}(i_0)\leqslant C_L(i_0)=N_L(K_0)$. By similar arguments, we conclude that $T_k \leqslant N_G(K_\ell)$ for all $k,\ell =0,1,\ldots, n$.

Since $T_0$ commutes with $T_\ell$ for each $\ell=1,2,\ldots, n$. We have
\[
T_0 \leqslant C_G(\langle T_1, T_2, \ldots,T_n\rangle)=\langle T_1, T_2, \ldots,T_n\rangle.
\]
 Hence $\langle T_1, T_2, \ldots,T_n\rangle=\langle T_0,T_1, T_2, \ldots,T_n\rangle$ and the lemma follows.
\end{proof}

\subsection{Groups of type $B_n$}

Assume that $G\cong \om_{2n+1}(q)$, $q>3$, $n\geqslant 3$. Let $\{K_0, K_1, \ldots,K_n\}$ be an extended Curtis-Tits configuration for $G$ where $K_\ell$, $\ell=0,1,\ldots,n-1$, correspond to long root $\sl_2(q)$-subgroups and $K_n$ corresponds to the short root $\sl_2(q)$-subgroup in the extended Dynkin diagram of $G$ (see Figure~\ref{ecn}). Then $K_\ell \cong \sl_2(q)$ for $\ell=0,1,\ldots,n-1$ and $K_n\cong \psl_2(q)$.
We set
\begin{itemize}
\item[$\bullet$] $T_0=C_{K_0}(i_2)$, $T_1=C_{K_1}(i_2)$,
\item[$\bullet$] $T_\ell=C_{K_\ell}(i_{\ell-1})$, $\ell=2, \ldots, n-1$,
\item[$\bullet$] $T_n < C_{K_n}(i_{n-1})$ where $T_n$ is an abelian group of order $(q-1)/2$.
\end{itemize}

\begin{lemma}\label{nor:tor:bn}
Let $G\cong \om_{2n+1}(q)$, $n\geqslant 3$. Then $\langle T_0,T_1,\ldots,T_n \rangle$ is a maximally split torus normalizing each $K_\ell$, $\ell=0,1,\ldots,n$. In particular, $\langle T_0,T_1,\ldots,T_n \rangle=\langle T_1, \ldots, T_n\rangle$.
\end{lemma}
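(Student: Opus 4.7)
The plan is to mirror Lemmas~\ref{nor:tor:an} and \ref{nor:tor:cn}: first establish pairwise commutativity of the $T_\ell$'s, then derive that each $T_k$ normalizes each $K_\ell$, and finally absorb $T_0$ into $\langle T_1,\dots,T_n\rangle$ by the self-centralization of the resulting maximal torus.

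The easy half of the commutativity comes for free: the sub-configuration $\{K_1,\dots,K_{n-1}\}$ is of type $A_{n-1}$, so Lemma~\ref{nor:tor:an} applies verbatim and yields $\langle T_1,\dots,T_{n-1}\rangle$ abelian of order $(q-1)^{n-1}$ with every $T_k$ normalizing every $K_\ell$ for $k,\ell\in\{1,\dots,n-1\}$. For $T_0$, I would read off the extended $B_n$-diagram (Figure~\ref{ebn}): since $K_0$ is adjacent only to $K_2$, $[T_0,T_\ell]=1$ holds trivially for $\ell\in\{1,3,\dots,n\}$. The remaining commutation $[T_0,T_2]=1$ I would handle exactly as in Lemma~\ref{nor:tor:an}: inside $L=\langle K_0,K_2\rangle\cong\sl_3(q)$, \cite[Theorem~4.5.5(c)]{gorenstein1998} gives $C_L(i_2)=N_L(K_2)$, so $T_0=C_{K_0}(i_2)$ normalizes $K_2$, and the $T_0$-fixed split torus $C_{K_2}(T_0)\leqslant C_{K_2}(i_0)=T_2$ must coincide with $T_2$ for order reasons.

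The main obstacle will be the double-bond commutation $[T_{n-1},T_n]=1$, since $\langle K_{n-1},K_n\rangle\cong\ppsp_4(q)$ is not of type $A$, $K_n\cong\psl_2(q)$, and $T_n$ is only the unique split torus of order $(q-1)/2$ inside the (potentially larger) centralizer $C_{K_n}(i_{n-1})$. My plan is to use $i_{n-2}\in Z(K_{n-2})$ as an external witness: since $K_{n-2}$ and $K_n$ are not adjacent in $\Delta^*$, $i_{n-2}$ centralizes $K_n$ while acting as a type-$t_1$ involution on $K_{n-1}$. Applying \cite[Theorem~4.5.5(c)]{gorenstein1998} inside $\langle K_{n-2},K_{n-1}\rangle\cong\sl_3(q)$ shows that $T_{n-1}=C_{K_{n-1}}(i_{n-2})$ normalizes $K_n$. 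The torus $T_{n-1}$ then acts on $K_n$ by automorphisms commuting with $i_{n-1}\in T_{n-1}$, so it preserves $C_{K_n}(i_{n-1})$ and hence its unique split subtorus $T_n$ of order $(q-1)/2$. An order count on $T_{n-1}T_n\leqslant\langle K_{n-1},K_n\rangle\cong\ppsp_4(q)$, together with $T_{n-1}\cap T_n=1$ (as $K_{n-1}\cap K_n=1$ modulo the centre), gives $|T_{n-1}T_n|=(q-1)^2/2$, which matches the order of the maximal split torus of $\ppsp_4(q)$; the normalizing subgroup $T_{n-1}T_n$ must therefore be that maximal torus, hence abelian, and $[T_{n-1},T_n]=1$. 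The remaining $[T_n,T_\ell]=1$ for $\ell\leqslant n-2$ are immediate from $[K_n,K_\ell]=1$.

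With pairwise commutativity established, each $T_k$ normalizes each $K_\ell$ by combining the above arguments (inside the $A_{n-1}$-chain by Lemma~\ref{nor:tor:an}; $T_0$ normalizes $K_2$ as shown and trivially every other $K_\ell$; the maximal split torus $T_{n-1}T_n$ of $\ppsp_4(q)$ normalizes both $K_{n-1}$ and $K_n$). The group $\langle T_1,\dots,T_n\rangle$ is then abelian of order $(q-1)^{n-1}\cdot(q-1)/2=(q-1)^n/2$, matching the order of a maximal split torus of $\om_{2n+1}(q)$, so it is one. Finally, $T_0$ commutes with every $T_\ell$, $\ell\geqslant 1$, hence lies in the centralizer of this maximal torus; since maximal tori of $G$ are self-centralizing, $T_0\leqslant\langle T_1,\dots,T_n\rangle$, establishing the asserted equality $\langle T_0,T_1,\dots,T_n\rangle=\langle T_1,\dots,T_n\rangle$.
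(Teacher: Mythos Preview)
Your overall strategy---reduce to the $A_{n-1}$ and $\psp_4$ sub-configurations and then invoke the arguments of Lemmas~\ref{nor:tor:an} and~\ref{nor:tor:cn}---is exactly the paper's approach. The paper is slightly slicker about $T_0$: it observes that $\{K_0,K_2,\dots,K_{n-1}\}$ is itself an $A_{n-1}$ configuration (from the extended $B_n$ diagram), so Lemma~\ref{nor:tor:an} swallows $T_0$ in one bite rather than isolating $\langle K_0,K_2\rangle\cong\sl_3(q)$. Your treatment of $T_0$ is correct, just more local.

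There is, however, a genuine gap in your double-bond step. The sentence ``Applying \cite[Theorem~4.5.5(c)]{gorenstein1998} inside $\langle K_{n-2},K_{n-1}\rangle\cong\sl_3(q)$ shows that $T_{n-1}=C_{K_{n-1}}(i_{n-2})$ normalizes $K_n$'' is a non sequitur: working inside $\langle K_{n-2},K_{n-1}\rangle$ can only tell you that $T_{n-1}$ normalizes $K_{n-2}$, not $K_n$. What you need is the analogue of the $\psp_4$ argument in Lemma~\ref{nor:tor:cn}, with $i_{n-2}$ playing the role of the element $j$ there. Concretely: set $M=\langle K_{n-1},K_n\rangle\cong\psp_4(q)$; since $i_{n-2}$ centralizes $K_n$ and $T_{n-1}$, both lie in $C_M(i_{n-2})$, and $K_n$ is the semisimple component of $C_M(i_{n-2})$ (the automorphism induced by $i_{n-2}$ on $M$ is of type $t_2$), hence normal there. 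That gives $T_{n-1}\leqslant C_M(i_{n-2})\leqslant N_M(K_n)$, and then $T_n=C_{K_n}(i_{n-1})=C_{K_n}(T_{n-1})$ exactly as in the $\sl_3$ case. Your subsequent order count (``$|T_{n-1}T_n|=(q-1)^2/2$ equals the order of a maximal split torus, so it \emph{is} one, hence abelian'') is also not a valid deduction on its own: matching the order of a torus does not force a subgroup to be abelian. Once you have $T_n=C_{K_n}(T_{n-1})$ the commutation is immediate and the order argument becomes unnecessary.
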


\begin{proof}
Observe that $\langle K_0,K_2,\ldots,K_{n-1}\rangle \cong \langle K_1,K_2,\ldots,K_{n-1}\rangle \cong \sl_n(q)$ and $\langle K_{n-1}, K_n\rangle \cong \psp_4(q)$. Recall that $K_n\cong \psl_2(q)$ and the involution $i_{n-1} \in K_{n-1}$ acts as an involution of type $t_1$ on $K_n$ so $C_{K_n}(i_{n-1})$ is a dihedral group of order $q-1$. Taking $T_n$ as the abelian group of order $(q-1)/2$ in $C_{K_n}(i_{n-1})$,  the result follows from Lemmas \ref{nor:tor:an} and \ref{nor:tor:cn}.
\end{proof}

\subsection{Groups of type $D_n$}

Assume that $G\cong \ppom_{2n}^+(q)$, $q>3$, $n\geqslant 4$. Let $\{K_0, K_1, \ldots,K_n\}$ be an extended  Curtis-Tits configuration for $G$. 
Then, for each $\ell=0,1,\dots,n$ (see Figure~\ref{ean}), $K_\ell\cong \sl_2(q)$. Assume that $i_\ell \in K_\ell$ is the unique involution for each $\ell=0,1, \dots, n$.

We set
\begin{itemize}
\item[$\bullet$] $T_0=C_{K_0}(i_2)$, $T_1=C_{K_1}(i_2)$,
\item[$\bullet$] $T_{n-1}=C_{K_{n-1}}(i_{n-2})$, $T_n=C_{K_n}(i_{n-2})$,
\item[$\bullet$] $T_\ell=C_{K_\ell}(i_{\ell-1})$, $\ell=2,\ldots,n-2$.
\end{itemize}

\begin{lemma}\label{nor:tor:dn1}
Let $G\cong \ppom_{2n}^+(q)$, $n\geqslant 4$. Then $\langle T_0,T_1,\ldots,T_n \rangle$ is a maximally split torus normalizing each $K_\ell$, $\ell=0,1,\ldots,n$. In particular, $\langle T_0,T_1,\ldots,T_n \rangle=\langle T_1, \ldots, T_n \rangle$.
\end{lemma}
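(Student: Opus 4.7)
The plan is to mirror the proofs of Lemmas~\ref{nor:tor:an} and~\ref{nor:tor:cn}, with the key new feature being the two symmetric forks of the extended $D_n$ diagram at the nodes $K_2$ and $K_{n-2}$. The interior subchain $K_1 - K_2 - K_3 - \cdots - K_{n-1}$ together with $T_1, T_2, \ldots, T_{n-1}$ is a Curtis-Tits sub-configuration of type $A_{n-1}$ with exactly the commutation pattern used in Lemma~\ref{nor:tor:an}, so that proof, applied to $\{K_1, K_2, \ldots, K_{n-1}\}$, immediately yields that the tori $T_1, \ldots, T_{n-1}$ mutually commute, each $T_k$ normalizes each $K_\ell$ for $1 \leqslant k, \ell \leqslant n-1$, and together they generate a split torus of $\langle K_1, \ldots, K_{n-1}\rangle$.

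It remains to incorporate $T_0$ and $T_n$, and by the left-right symmetry of the $D_n$ diagram it suffices to treat $T_0$. For $[T_0, T_j] = 1$ with $j \neq 2$ the argument is immediate: $K_0$ is non-adjacent to $K_j$ in $\Delta^*$, so $[K_0, K_j] = 1$ and hence $[T_0, T_j] = 1$; in particular $[T_0, T_1] = 1$. For $[T_0, T_2] = 1$ I would transcribe the proof of Lemma~\ref{nor:tor:an} applied to $L = \langle K_0, K_2\rangle \cong \sl_3(q)$: by \cite[Theorem~4.5.5(c)]{gorenstein1998}, $C_L(i_2) = N_L(K_2)$, so $T_0 = C_{K_0}(i_2)$ normalizes $K_2$, whence $C_{K_2}(T_0)$ is a torus of $K_2$, and since $i_0 \in T_0$ we obtain $C_{K_2}(T_0) \leqslant C_{K_2}(i_0)$.

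The only genuinely new step, and hence the main obstacle, is the identification $C_{K_2}(i_0) = C_{K_2}(i_1) = T_2$. For this I would pass to the rank-$3$ subgroup $M = \langle K_0, K_1, K_2\rangle$. Since $K_0$ and $K_1$ commute and each is joined to $K_2$ by a single bond, $\{K_0, K_1, K_2\}$ is a Curtis-Tits configuration of type $A_3$, and by the Curtis-Tits Theorem~\ref{th:27.3}, $M$ is a central quotient of $\sl_4(q)$. In the natural matrix realization of $\sl_4(q)$, the central involutions of $K_0$ and $K_1$ lift to $\mathrm{diag}(-1,-1,1,1)$ and $\mathrm{diag}(1,1,-1,-1)$, whose product is $-I \in Z(\sl_4(q))$; pushing down, the product $i_0 i_1$ is central in $M$ and in particular commutes with $K_2$. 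Hence $C_{K_2}(i_0) = C_{K_2}(i_1) = T_2$. Combined with the previous paragraph this gives $C_{K_2}(T_0) \leqslant T_2$, and the reverse inclusion follows because $T_0$ and $T_2$ lie in a common diagonal torus of $M$ and therefore commute, so $[T_0, T_2] = 1$.

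The same $\sl_3(q)$ argument gives $T_0 \leqslant N_G(K_2)$, and $T_0 \leqslant N_G(K_\ell)$ for $\ell \neq 2$ is automatic from non-adjacency. The symmetric argument at the fork $K_{n-2}$ handles $T_n$. Hence $T := \langle T_0, T_1, \ldots, T_n\rangle$ is abelian and normalizes every $K_\ell$. Since $\langle T_1, \ldots, T_n\rangle$ has $\GF(q)$-rank $n$ (one coroot per simple root), it is already a maximal split torus of $G$, hence self-centralizing in $G$; then $T_0 \leqslant C_G(\langle T_1, \ldots, T_n\rangle) = \langle T_1, \ldots, T_n\rangle$, which yields the ``in particular'' claim.
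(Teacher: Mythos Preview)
Your argument is correct, but the paper takes a shorter route. Instead of handling $T_0$ and $T_n$ separately via a local $A_3$ analysis at each fork, the paper observes that the extended $D_n$ diagram contains \emph{two} $A_{n-1}$ subchains,
\[
\langle K_0, K_2, K_3, \ldots, K_{n-2}, K_n\rangle \cong \langle K_1, K_2, K_3, \ldots, K_{n-2}, K_{n-1}\rangle \cong \ppsl_n(q),
\]
which together cover all of $K_0,\dots,K_n$, and simply invokes Lemma~\ref{nor:tor:an} for each. Your decomposition (one $A_{n-1}$ chain $K_1,\dots,K_{n-1}$ plus an $A_3$ at each end) works equally well and in fact makes explicit the one point the paper suppresses: for Lemma~\ref{nor:tor:an} to apply to the chain $\{K_0,K_2,\dots\}$ one needs $C_{K_2}(i_0)=C_{K_2}(i_1)$, which is precisely what your $\sl_4(q)$ computation ($i_0i_1\in Z(\langle K_0,K_1,K_2\rangle)$) establishes. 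So the paper's proof is shorter but tacitly relies on the identification you spell out; your version is longer but self-contained at that step.
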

\begin{proof}
It follows from the extended Dynkin diagram of $G$ (see Figure~\ref{ean} on page~\pageref{ean}) that \[\ppsl_n(q) \cong \langle K_0, K_2,K_3, \ldots,K_{n-2}, K_n \rangle \cong \langle K_1, K_2,K_3, \ldots,K_{n-2}, K_{n-1}\rangle.\] Hence the result follows from Lemma \ref{nor:tor:an}.
\end{proof}




\section{Construction of the Weyl group}
In this section, we construct the generators of the Weyl group of a quasi-simple classical group $G$, which correspond to the fundamental reflections in the root  system of $G$.

We assume that $\qpone$ throughout  this section. Let $\{K_0, K_1, \ldots, K_n\}$ be an extended Curtis-Tits configuration of $G$. Assume also that $T_\ell < K_\ell$, $\ell=0,1,\ldots, n$, be the corresponding tori constructed as in Section \ref{sec:torus}. We construct the Weyl group elements $w_\ell \in K_\ell$ as discussed in Remark \ref{rem:weyl} for each $\ell=0,1,\ldots,n$.

\begin{lemma}\label{lem:weyl}
Let $w_\ell \in K_\ell$ be Weyl group elements associated to $T_\ell$, that is, $w_\ell$ inverts $T_\ell$ for each $\ell=0,1,\ldots,n$. Then $w_\ell \in N_G(T)$ for each $\ell=0,1,\ldots,n$ where $T=\langle T_0,T_1, \dots,T_n \rangle$. In particular, $$W=\langle w_0,w_1, \dots,w_n \rangle T/T=\langle w_1, \ldots, w_n\rangle T/T$$ is the Weyl group of $G$.
\end{lemma}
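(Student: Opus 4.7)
The plan is to prove the lemma in two parts: first that each $w_\ell$ normalises the full torus $T$, and then that the image of $\{w_1,\ldots,w_n\}$ in $N_G(T)/T$ coincides with the Weyl group.

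For the first part, fix $\ell \in \{0,1,\ldots,n\}$. Since $T=\langle T_0, T_1,\ldots,T_n\rangle$, it suffices to show that $w_\ell^{-1} T_k w_\ell \leqslant T$ for every $k\in\{0,1,\ldots,n\}$. When $k=\ell$, this is immediate because $w_\ell$ inverts $T_\ell$ by hypothesis. When $k\neq \ell$, I would split by whether the nodes $k$ and $\ell$ are adjacent in the extended Dynkin diagram $\Delta^*$. If they are not adjacent, condition (b) of the Curtis--Tits configuration gives $[K_k,K_\ell]=1$, so in particular $w_\ell$ centralises $T_k$, and there is nothing to prove.

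The interesting case is when $k$ and $\ell$ are adjacent in $\Delta^*$. Here the subgroup $L:=\langle K_k,K_\ell\rangle$ is a rank~$2$ quasi-simple group ($\sl_3(q)$ for a single bond, $\ppsp_4(q)$ or $\sp_4(q)$ for a double bond, by condition~(c)). From the constructions in Section~\ref{sec:torus} the tori $T_k$ and $T_\ell$ commute and both lie in $L$, so $T_kT_\ell$ is contained in a maximal split torus $H_L$ of $L$; by comparing orders (or by condition~(f) applied within $L$), we in fact have $H_L=T_kT_\ell$ up to finite-index issues irrelevant for normalisation. Since $w_\ell\in K_\ell<L$ is a Weyl group element of $K_\ell$, it normalises $H_L$ (a standard fact for rank~$2$ Chevalley groups, easily verified from the Steinberg relations in $L$). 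Therefore $w_\ell^{-1}T_kw_\ell\leqslant H_L\leqslant T_kT_\ell\leqslant T$. Combining the three cases yields $w_\ell\in N_G(T)$ for every $\ell$.

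For the second part, by condition~(f) applied to the whole diagram $\Delta$ the subgroup $T=\langle T_1,\ldots,T_n\rangle$ is a maximally split torus, so $N_G(T)/T$ is the Weyl group $W$ of $G$ in the usual sense. The image of $w_\ell$ in $W$ acts on the character lattice as the reflection $s_{\alpha_\ell}$ along the simple root associated with $K_\ell$: indeed $w_\ell$ inverts $T_\ell=T\cap K_\ell$ and centralises the complementary direct factors of $T$ coming from the other $T_k$ with $k$ non-adjacent to $\ell$, while in an adjacent rank-$2$ subsystem it acts by the corresponding simple reflection. Since the Weyl group of an irreducible root system is generated by its simple reflections, $\langle w_1,\ldots,w_n\rangle T/T=W$. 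Finally, $w_0$ corresponds to the reflection along $-\alpha_0$, where $\alpha_0$ is the highest root; this reflection already lies in $W=\langle s_{\alpha_1},\ldots,s_{\alpha_n}\rangle$, so $w_0T\in\langle w_1,\ldots,w_n\rangle T/T$, giving the claimed equality of the two generating sets.

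The main obstacle will be the rank-$2$ adjacent case, and in particular the double-bond subsystems arising in type $B_n$ and $C_n$: one must verify carefully that $T_k$ and $T_\ell$ as constructed in Section~\ref{sec:torus} really do generate a full maximal torus of the rank-$2$ overgroup $L$ and that the Weyl element $w_\ell$ constructed inside $K_\ell$ acts on $L$ exactly as the expected simple reflection (in $\ppsp_4(q)$ the two root lengths make the bookkeeping less symmetric). Everything else is a direct translation of standard Chevalley-group facts through the Curtis--Tits setup.
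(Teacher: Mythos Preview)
Your proposal is correct and follows the same overall shape as the paper's proof: split by adjacency in $\Delta^*$, handle the non-adjacent case via $[K_k,K_\ell]=1$, and work inside the rank-$2$ overgroup $L=\langle K_k,K_\ell\rangle$ for the adjacent case. The one genuine difference is how you settle the adjacent case. You argue structurally: identify $T_kT_\ell$ with a maximal split torus $H_L$ of $L$ and then invoke the standard Chevalley-group fact that the Weyl element of a root $\sl_2$-subgroup normalises $H_L$. The paper instead gives a short self-contained centraliser computation: since $w_\ell$ inverts $T_\ell$ we have $T_\ell^{w_\ell}=T_\ell$, so conjugating $[T_\ell,T_k]=1$ by $w_\ell$ gives $T_k^{w_\ell}\leqslant C_L(T_\ell)=\langle T_\ell,T_k\rangle$, the last equality being an elementary calculation in $\sl_3(q)$ or $\ppsp_4(q)$. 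The paper's route avoids having to check that $T_kT_\ell$ really is the full maximal torus of $L$ and that your chosen $w_\ell$ (an arbitrary inverter of $T_\ell$ in $K_\ell$) coincides with a Steinberg $n_\alpha$---points you flagged yourself as ``finite-index issues'' and ``bookkeeping'' in the double-bond case. Your approach is fine, but the centraliser trick is cleaner and makes the $\ppsp_4$ case no harder than the $\sl_3$ case. For the second part, your argument (images are simple reflections, which generate $W$; $w_0$ corresponds to $s_{-\alpha_0}\in W$) is more explicit than the paper's, which simply cites $N_G(T)=\langle w_1,\ldots,w_n\rangle T$ and reads off $w_0\in\langle w_1,\ldots,w_n\rangle T$.
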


\begin{proof}
We prove that $w_1 \in N_G(T)$ and the other cases are analogous. Assume first that $G\cong \ppsl_n(q)$ and  $L=\langle K_1, K_2 \rangle \cong \sl_3(q)$. Since $w_1$ inverts $T_1$ and $[T_1,T_2]=1$, we have $[T_1^{w_1},T_2]=1$ which implies that $T_2^{w_1} \leqslant C_L(T_1)=\langle T_1,T_2\rangle$. Hence $w_1 \in N_L(\langle T_1,T_2 \rangle)$. Similarly, $w_1 \in N_L(\langle T_1,T_0 \rangle)$. By the construction of $K_0,K_1, \ldots, K_n$, it is clear that $w_1$ commutes with $T_\ell$ for $\ell \geqslant 3$. Thus $w_1 \in N_G(T)$.

If $G\cong \ppsp_{2n}(q)$, then it is enough to consider $L=\langle K_0, K_1 \rangle \cong \ppsp_4(q)$. Then, following the same arguments above, we see that $w_1 \in N_G(T)$. The groups of type $B_n$ and $D_n$ are treated similarly.

Observe that, in all cases, we have $w_0 \in N_G(T)$ by the same arguments in the case $G\cong \psl_n(q)$. Since $N_G(T)=\langle w_1, \ldots , w_n\rangle T$, we have $w_0\in \langle w_1, \ldots, w_n\rangle T$ and the lemma follows.
\end{proof}

\section{Construction of root elements in $\ppsl_{n+1}(q)$}\label{sec:root}

Let $G\cong \ppsl_{n+1}(q)$ and $\{K_0,K_1, \ldots, K_n \}$ be an extended Curtis-Tits configuration for $G$. In this section, we construct unipotent elements $u_0,u_1, \ldots, u_n$ in $K_0,K_1,\ldots,K_n$ where the maximal split torus $T=\langle T_0, T_1, \ldots, T_n \rangle$ constructed in Section \ref{sec:torus} normalizes the root subgroups $U_\ell < K_\ell$ containing $u_\ell$ for each $\ell=0,1,2,\ldots,n$. Note that $T_\ell < K_\ell$ for each $\ell=0,1,2,\ldots,n$.

By Algorithm \ref{algorithm:psl2}, we can construct a triple $(u,T,w)$ such that $u \in K_1\cong \sl_2(q)$ is a unipotent element, $T<K_1$ is a torus of order $q-1$ normalizing the root subgroup containing $u$ and $w\in K_1$ is a Weyl group element. By Lemmas \ref{lem:align} and \ref{elt:3:prob}, we can find an element $g \in K_1 $ such that $T^g=T_1$ with probability at least $1/2$. We set $u_1=u^g$ and $w_1=w^g$. Then, it is clear that $T_1$ is a maximal torus in $K_1$ normalizing the root subgroup $U_1$ containing $u_1$. Moreover $w_1$ inverts $T_1$ and $U_1^w$ is the opposite root subgroup of $U_1$.

For each tori $T_0,T_1, \ldots, T_n$, let $w_0,w_1,\ldots,w_n$ be the corresponding Weyl group elements constructed as discussed in Remark \ref{rem:weyl}, then, by Lemma \ref{lem:weyl}, $$W=\langle w_0,w_1,\ldots,w_n \rangle T/T=\langle w_1,\ldots,w_n \rangle T/T$$ is the Weyl group of $G$.

\begin{lemma}\label{lem:root:subs}
Let $G\cong \ppsl_{n+1}(q)$. If $ \{w_1, \ldots, w_n\}$ is a set of fundamental reflections in the Weyl group $W$ of $G$. Then $$\alpha_i=w_{i-1}w_i(\alpha_{i-1})$$ where $\alpha_i$ is the corresponding fundamental root in the root system of $G$ and $i=2,\ldots,n$. Moreover, $$\alpha_0=w_0w_nw_0(\alpha_n).$$
\end{lemma}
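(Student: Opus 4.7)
The plan is to verify both identities as direct computations inside the root system of type $A_n$, using only the reflection formula
\[
w_i(\beta)=\beta-\langle \beta,\alpha_i^\vee\rangle \alpha_i
\]
together with the Cartan integers $\langle \alpha_i,\alpha_j^\vee\rangle = 2\delta_{ij}-\delta_{|i-j|,1}$ for the simple roots $\alpha_1,\ldots,\alpha_n$ of $A_n$. Both identities then reduce to a few lines of arithmetic in the root lattice.

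For the first identity, since $\alpha_{i-1}$ and $\alpha_i$ are adjacent in the Dynkin diagram, one has $w_i(\alpha_{i-1}) = \alpha_{i-1}+\alpha_i$. Applying $w_{i-1}$ and using linearity together with $w_{i-1}(\alpha_{i-1}) = -\alpha_{i-1}$ and $w_{i-1}(\alpha_i)=\alpha_i+\alpha_{i-1}$ gives
\[
w_{i-1}w_i(\alpha_{i-1}) \;=\; -\alpha_{i-1}+(\alpha_i+\alpha_{i-1})\;=\;\alpha_i,
\]
which is the claim for $i=2,\ldots,n$.

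For the second identity I would first fix the convention from the setup of the paper: the extra node of the extended Dynkin diagram is $-\alpha_0$, where $\alpha_0=\alpha_1+\cdots+\alpha_n$ is the highest root of $A_n$; consequently $w_0$ coincides with the reflection $s_{\alpha_0}=s_{-\alpha_0}$, and in the extended diagram $\alpha_0$ is adjacent precisely to $\alpha_1$ and $\alpha_n$ and orthogonal to the remaining simple roots. A short calculation then gives
\[
w_0(\alpha_n)=\alpha_n-\alpha_0=-(\alpha_1+\cdots+\alpha_{n-1});
\]
applying $w_n$, which fixes $\alpha_1,\ldots,\alpha_{n-2}$ and sends $\alpha_{n-1}$ to $\alpha_{n-1}+\alpha_n$, produces $-(\alpha_1+\cdots+\alpha_n)=-\alpha_0$; and finally $w_0(-\alpha_0)=\alpha_0$, yielding $w_0w_nw_0(\alpha_n)=\alpha_0$.

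The main obstacle is purely notational: one must be careful about the sign convention relating $\alpha_0$ (the highest root) and the extra node $-\alpha_0\in\Pi^*$, and keep in mind that the Weyl group element $w_0$ attached to $K_0$ represents $s_{\alpha_0}=s_{-\alpha_0}$. Once this bookkeeping is fixed, the proof is a mechanical application of the reflection formula in $A_n$, and the same scheme extends verbatim to the analogous statements that will be needed for the other classical types.
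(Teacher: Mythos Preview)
Your proof is correct and follows exactly the approach indicated in the paper: the paper's own proof consists of the single sentence ``The proof follows from a direct computation in the structure of the root system of type $A_n$,'' and you have carried out precisely that computation. Your handling of the sign convention for $\alpha_0$ versus the extra node $-\alpha_0\in\Pi^*$ is appropriate and matches the paper's setup.
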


\begin{proof}
The proof follows from a direct computation in the structure of the root system of type $A_n$.
\end{proof}

\begin{corollary}\label{cor:root:subs}
Let $G\cong \ppsl_{n+1}(q)$. We have $$U_i=U_{i-1}^{w_{i-1}w_i}$$ for each $i=2,\ldots,n$ and $$U_0=U_n^{w_0w_nw_0}.$$
\end{corollary}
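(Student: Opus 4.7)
The plan is to simply translate Lemma \ref{lem:root:subs}, which is a statement about fundamental roots, into a statement about the corresponding root subgroups via the standard action of $N_G(T)$ on the root data. The key ingredient is the well-known formula
\[
U_\alpha^{n} = U_{\bar{n}(\alpha)},
\]
valid for any $n \in N_G(T)$ with image $\bar{n}$ in the Weyl group $W=N_G(T)/T$ (see, e.g., \cite[Proposition 6.4.1]{carter1972}). In our setting, the elements $w_\ell$ are representatives in $N_G(T)$ of the fundamental reflections $\bar{w}_\ell$ associated to $\alpha_\ell$, as established in Lemma \ref{lem:weyl}.

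First I would record that, by construction in Section \ref{sec:root}, each $U_\ell$ is the root subgroup corresponding to the fundamental root $\alpha_\ell$ indexing $K_\ell$; that is, $U_\ell = U_{\alpha_\ell}$ inside $G$. Then, applying the formula above, for any $i \in \{2,\ldots,n\}$ we have
\[
U_{i-1}^{w_{i-1}w_i} = U_{\alpha_{i-1}}^{w_{i-1}w_i} = U_{\bar{w}_{i-1}\bar{w}_i(\alpha_{i-1})}.
\]
By Lemma \ref{lem:root:subs}, $\bar{w}_{i-1}\bar{w}_i(\alpha_{i-1}) = \alpha_i$, so the right-hand side equals $U_{\alpha_i} = U_i$, which is the first claim.

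For the second claim, the same argument yields
\[
U_n^{w_0 w_n w_0} = U_{\bar{w}_0\bar{w}_n\bar{w}_0(\alpha_n)} = U_{\alpha_0} = U_0,
\]
where we again invoke Lemma \ref{lem:root:subs} for the middle equality. The only subtle point is ensuring that the representatives $w_\ell$ chosen in $K_\ell$ really do act as the fundamental reflections $\bar w_\ell$ on the root lattice: this is exactly what was verified in Lemma \ref{lem:weyl} together with Remark \ref{rem:weyl}, so no new work is required. Hence I do not anticipate any genuine obstacle here; the corollary is an immediate bookkeeping consequence of Lemma \ref{lem:root:subs} and the Chevalley commutator/conjugation formula for root subgroups.
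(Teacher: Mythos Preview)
Your proposal is correct and matches the paper's approach: the paper states the corollary without proof, treating it as an immediate consequence of Lemma~\ref{lem:root:subs} via the standard Chevalley conjugation formula $U_\alpha^{n} = U_{w(\alpha)}$, and you have simply spelled this out explicitly. There is nothing to add.
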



\section{The algorithm}

\begin{algorithm}\label{the:algorithm} Let $G\cong \ppsl_{n+1}(q)$, $\ppsp_{2n}(q)$, $\om_{2n+1}(q)$, or $\ppom_{2n}^+(q)$ where $q=p^k$ and $\qpone$.
\begin{itemize}
\item[Input: ]
\subitem $\bullet$ a set of generators for $G$;
\subitem $\bullet$ the characteristic $p$ of the underlying field;
\subitem $\bullet$ an exponent for $G$.
\item[Output:]
\subitem $\bullet$ An extended Curtis-Tits system $\{K_0, K_1, \ldots,K_n\}$ for $G$ together with
\subsubitem $\bullet$ The root subgroups $U_\ell < K_\ell$ for each $\ell=0,1,\ldots,n$;
\subsubitem $\bullet$ The maximally split torus $$T=\langle T_0, T_1, \ldots,T_n \rangle$$ where $T_k < N_G(U_\ell)$ for all $k,\ell=0,1,\ldots,n$;
\subsubitem $\bullet$ The Weyl group elements $w_\ell \in K_\ell$, where $U_\ell^{w_\ell}$ is the opposite root subgroup of $U_\ell$ for each $\ell = 0,1,\ldots,n$ and $$\langle w_0,w_1,\ldots,w_n\rangle T/T=\langle w_1, \ldots, w_n\rangle T/T$$ is the Weyl group of $G$.
\end{itemize}
\end{algorithm}

The details of  Algorithm \ref{the:algorithm} are as follows:

\begin{itemize}
\item[1.] Construct an extended Curtis-Tits configuration $\Sigma=\{K_0, K_1, \ldots, K_n\}$ of $G$ and find the size $q$ of the underlying field.
\item[2.] Construct a maximally split torus $T=\langle T_0,T_1,\ldots, T_n\rangle$ and Weyl group elements $w_0,w_1,\ldots,w_n$, where $T_\ell \leqslant K_\ell$   and $T$ normalizes $K_\ell$ for each $\ell=0,1,\ldots,n$.
\item[3.] Construct a subgroup $H_1\cong \sl_2(p)$ or $\sl_2(p^2)$ if $\ppone$ or $\pmone$, respectively, in  $K_1$.
\item[4.] Construct $(u,S,w)$ in $H_1$, where $u$ is a unipotent element, $S$ is a maximal torus normalizing the root subgroup containing $u$ and $w$ is a Weyl group element which inverts $S$.
\item[5.] Construct the maximal torus $T\leqslant K_1$ containing $S$.
\item[6.] Construct $z\in K_1$ such that $T^z=T_1$.
\item[7.] Construct the remaining unipotent elements in each $K_\ell$, $\ell=0,2,3,\ldots,n$.
\end{itemize}

And what follows is a more detailed descriptions of Steps 1--7.

\bd
\item[{\bf Step 1:}] We use the algorithm in \cite{suko03} to construct an extended Curtis-Tits configuration \[\Sigma=\{K_0, K_1, \ldots, K_n\}\] of $G$. We compute the size $q$ of the underlying field by using \cite[Algorithm 5.5]{yalcinkaya07.171}.

\item[{\bf Step 2:}] We construct a maximally split torus $T=\langle T_0,T_1,\ldots,T_n\rangle$ as described in Section \ref{sec:torus} depending on the type of the group $G$. Here the construction of the tori $T_k$, $k=0,1,\ldots,n$, means that we find a toral element $t_k \in T_k$ as in Step 2 of Algorithm \ref{algorithm:psl2}. Moreover, by Remark \ref{rem:weyl}, we can construct the corresponding Weyl group elements $w_0,w_1,\ldots,w_n$ in $K_0, K_1 \ldots, K_n$, respectively. By Lemma \ref{lem:weyl}, $$W=\langle w_0,w_1,\ldots,w_n\rangle T/T$$ is the Weyl group of $G$.

\item[{\bf Step 3:}]
Assume that $K_1\cong \psl_2(q)$. Then this is Step 2, 3 and 4 of Algorithm \ref{algorithm:psl2}. Observe that the same computations apply for $K_1 \cong \sl_2(q)$ with obvious modifications in the arguments as noted in the beginning of Section \ref{sec:base}.

\item[{\bf Step 4:}] This is Step 5 of Algorithm \ref{algorithm:psl2}.

\item[{\bf Step 5:}] We continue to assume that $K_1\cong \psl_2(q)$. If $i\in S$ is an involution, then $C_{K_1}(i)$ contains a torus $T$ of order $(q-1)/2$ containing $S$. It is clear that $T$ normalizes the the root subgroup $U_1$ which contains $u$. We construct the root subgroup $U_1$ by using Lemma \ref{uni:gen:sl2}.

\item[{\bf Step 6:}] Let $T_1$ and $T$ be the tori constructed in Step 2 and Step 5, respectively. Then, by Lemmas \ref{lem:align} and \ref{elt:3:prob}, we can construct an element $z \in K_1$ such that $T^z=T_1$ with probability at least $1/2$.

\item[{\bf Step 7:}] If $G\cong \ppsl_{n+1}(q)$, then, by Corollary \ref{cor:root:subs},  we construct the remaining unipotent elements $u_\ell \in K_\ell$  for each $\ell =0,2,\dots, n$.

If $G\cong \ppsp_{2n}(q)$, then,  since
\[
\langle K_1, \ldots, K_{n-1}\rangle \cong \frac{1}{(2,n)}\sl_n(q) \mbox{ or } \sl_n(q),
\]
we can  construct unipotent elements $u_2\in K_2, \ldots, u_{n-1} \in K_{n-1}$ by Corollary \ref{cor:root:subs}. To construct $u_0$ and $u_n$, we repeat Steps 3, 4, 5, 6 for the groups $K_0$ and $K_n$. It is clear that the unipotent elements $u_0$ and $u_n$ are aligned with the rest of the unipotent elements since $T_0$ and $T_n$ are aligned with the rest of the root subgroups. However, we need to check whether $u_0$ and $u_n$ commute with $u_1$ and $u_{n-1}$, respectively, since $u_0$ or $u_n$ may correspond to opposite root subgroups in $K_0$ and $K_n$, respectively.

If $G\cong \om_{2n+1}(q)$, then the construction of the remaining unipotent elements is similar to the construction of unipotent elements for $\psp_{2n}(q)$.

If $G\cong \ppom_{2n}^+(q)$, then \[\langle K_1,K_2, \ldots, K_{n-1} \rangle \cong \sl_n(q).\] Therefore, we can  construct unipotent elements $u_2\in K_2, \ldots, u_{n-1} \in K_{n-1}$ by Corollary \ref{cor:root:subs}. We follow the same arguments
in the case of $\ppsp_{2n}(q)$ to construct $u_0 \in K_0$ and $u_n\in K_n$.

Finally, by Lemma \ref{uni:gen:sl2}, we construct the root subgroups $U_0, U_1, \ldots, U_n$ for each type of the group $G$.
\ed

Algorithms described in this section provide proof of Theorem~\ref{cons:classical}.

\section*{Acknowledgements}

This paper would have never been written if the authors did not enjoy the warm hospitality offered to them at the Nesin Mathematics Village (in \c{S}irince, Izmir Province, Turkey) in August 2011 and August 2012; our thanks go to Ali Nesin and to all volunteers and staff who have made the Village a mathematical paradise.

We thank Adrien Deloro for many fruitful discussions, in \c{S}irince and elsewhere.

The first author is grateful to Dr Douglas E Jeffrey for most helpful advice.

We gratefully acknowledge the use of Paul Taylor's Commutative Diagrams package, \url{http://www.paultaylor.eu/diagrams/}.


\begin{thebibliography}{10}

\bibitem{babai00.627}
L.~Babai and I.~Pak, \emph{Strong bias of group generators: an obstacle to the
  ``product replacement algorithm''}, Proceedings of the Eleventh Annual
  ACM-SIAM Symposium on Discrete Algorithms (San Francisco, CA, 2000) (New
  York), ACM, 2000, pp.~627--635.

\bibitem{babai04.215}
L.~Babai and I.~Pak, \emph{Strong bias of group generators: an obstacle to the ``product
  replacement algorithm''}, J. Algorithms \textbf{50} (2004), no.~2, 215--231,
  SODA 2000 special issue.

\bibitem{babai84.229}
L.~Babai and E.~Szemer\'{e}di, \emph{On the complexity of matrix group
  problems}, Proc. 25th IEEE Sympos. Foundations Comp. Sci. (1984), 229--240.

\bibitem{borovik02.7}
A.~V. Borovik, \emph{Centralisers of involutions in black box groups},
  Computational and statistical group theory (Las Vegas, NV/Hoboken, NJ, 2001),
  Contemp. Math., vol. 298, Amer. Math. Soc., Providence, RI, 2002, pp.~7--20.

\bibitem{suko03}
A.V. Borovik and {\c{S}}.~Yal\c{c}{\i}nkaya, \emph{Construction of
  {C}urtis-{P}han-{T}its system for black box classical groups}, Available at
  arXiv:1008.2823v1.

\bibitem{suko12E}
A.V. Borovik and {\c{S}}.~Yal\c{c}{\i}nkaya, \emph{Classical black box groups in small odd characteristics}, in
  preparation.

\bibitem{suko12F}
A.V. Borovik and {\c{S}}.~Yal\c{c}{\i}nkaya, \emph{Fifty shades of black}, in preparation.

\bibitem{suko12D}
A.V. Borovik and {\c{S}}.~Yal\c{c}{\i}nkaya, \emph{Oracles and revelations}, in preparation.

\bibitem{suko12B}
A.V. Borovik and {\c{S}}.~Yal\c{c}{\i}nkaya, \emph{Subgroup structure and automorphisms of black box classical
  groups}, in preparation.

\bibitem{suko12C}
A.V. Borovik and {\c{S}}.~Yal\c{c}{\i}nkaya, \emph{Subgroup structure and automorphisms of black box groups of
  exceptional groups of odd characteristic}, in preparation.

\bibitem{bratus99.91}
S.~Bratus and I.~Pak, \emph{On sampling generating sets of finite groups and
  product replacement algorithm (extended abstract)}, Proceedings of the 1999
  International Symposium on Symbolic and Algebraic Computation (Vancouver, BC)
  (New York), ACM, 1999, pp.~91--96.

\bibitem{bray00.241}
J.~N. Bray, \emph{An improved method for generating the centralizer of an
  involution}, Arch. Math. (Basel) \textbf{74} (2000), no.~4, 241--245.

\bibitem{brooksbank01.79}
P.~A. Brooksbank, \emph{A constructive recognition algorithm for the matrix
  group {$\Omega(d,q)$}}, Groups and Computation III (W. M. Kantor and \'{A}.
  Seress, eds.), Ohio State Univ. Math. Res. Inst. Publ., vol.~8, de Gruyter,
  Berlin, 2001, pp.~79--93.

\bibitem{brooksbank03.162}
P.~A. Brooksbank, \emph{Fast constructive recognition of black-box unitary groups}, LMS
  J. Comput. Math. \textbf{6} (2003), 162--197.

\bibitem{brooksbank08.885}
P.~A. Brooksbank, \emph{Fast constructive recognition of black box symplectic groups},
  J. Algebra \textbf{320} (2008), no.~2, 885--909.

\bibitem{brooksbank01.95}
P.~A. Brooksbank and W.~M. Kantor, \emph{On constructive recognition of a black
  box {${\rm PSL}(d,q)$}}, Groups and Computation III (W. M. Kantor and \'{A}.
  Seress, eds.), Ohio State Univ. Math. Res. Inst. Publ., vol.~8, de Gruyter,
  Berlin, 2001, pp.~95--111.

\bibitem{brooksbank06.256}
P.~A. Brooksbank and W.~M. Kantor, \emph{Fast constructive recognition of black box orthogonal groups},
  J. Algebra \textbf{300} (2006), no.~1, 256--288.

\bibitem{carter1972}
R.~W. Carter, \emph{Simple {G}roups of {L}ie {T}ype}, John Wiley $\&$ Sons,
  London, 1972.

\bibitem{celler98.11}
F.~Celler and C.~R. Leedham-Green, \emph{A constructive recognition algorithm
  for the special linear group}, The atlas of finite groups: ten years on
  (Birmingham, 1995), London Math. Soc. Lecture Note Ser., vol. 249, Cambridge
  Univ. Press, Cambridge, 1998, pp.~11--26.

\bibitem{celler95.4931}
F.~Celler, C.~R. Leedham-Green, S.~H. Murray, A.~C. Niemeyer, and E.~A.
  O'Brien, \emph{Generating random elements of a finite group}, Comm. Algebra
  \textbf{23} (1995), no.~13, 4931--4948.

\bibitem{conder01.113}
M.~D.~E. Conder and C.~R. Leedham-Green, \emph{Fast recognition of classical
  groups over large fields}, Groups and Computation III (Berlin) (W.~M. Kantor
  and \'{A}. Seress, eds.), Ohio State Univ. Math. Res. Inst. Publ., vol.~8, de
  Gruyter, 2001, pp.~113--121.

\bibitem{conder06.1203}
M.~D.~E. Conder, C.~R. Leedham-Green, and E.~A. O'Brien, \emph{Constructive
  recognition of {${\rm PSL}(2,q)$}}, Trans. Amer. Math. Soc. \textbf{358}
  (2006), no.~3, 1203--1221.

\bibitem{gamburd06.411}
A.~Gamburd and I.~Pak, \emph{Expansion of product replacement graphs},
  Combinatorica \textbf{26} (2006), no.~4, 411--429.

\bibitem{gorenstein1994}
D.~Gorenstein, R.~Lyons, and R.~Solomon, \emph{The classification of the finite
  simple groups. {N}umber 1}, Mathematical Surveys and Monographs, vol.~40,
  American Mathematical Society, Providence, RI, 1994.

\bibitem{gorenstein1998}
D.~Gorenstein, R.~Lyons, and R.~Solomon, \emph{The classification of the finite simple groups. {N}umber 3.
  {P}art {I}. {C}hapter {A}}, Mathematical Surveys and Monographs, vol.~40,
  American Mathematical Society, Providence, RI, 1998.

\bibitem{gap}
The~GAP Group, \emph{Gap---groups, algorithms, and programming, version 4.4},
  Aachen, St Andrews \textbf{(\texttt{http://www-gap.dcs.st-and.ac.uk/~gap})}
  (2004).

\bibitem{guralnick01.169}
R.~M. Guralnick and F.~L{\"u}beck, \emph{On {$p$}-singular elements in
  {C}hevalley groups in characteristic {$p$}}, Groups and Computation III (W.
  M. Kantor and \'{A}. Seress, eds.), Ohio State Univ. Math. Res. Inst. Publ.,
  vol.~8, de Gruyter, Berlin, 2001, pp.~169--182.

\bibitem{isaacs95.139}
I.~M. Isaacs, W.~M. Kantor, and N.~Spaltenstein, \emph{On the probability that
  a group element is {$p$}-singular}, J. Algebra \textbf{176} (1995), no.~1,
  139--181.

\bibitem{kantor01.168}
W.~M. Kantor and {\'A}.~Seress, \emph{Black box classical groups}, Mem. Amer.
  Math. Soc. \textbf{149} (2001), no.~708, viii+168.

\bibitem{leedham-green01.229}
C.~R. Leedham-Green, \emph{The computational matrix group project}, Groups and
  Computation III (W. M. Kantor and \'{A}. Seress, eds.), Ohio State Univ.
  Math. Res. Inst. Publ., vol.~8, de Gruyter, Berlin, 2001, pp.~229--247.

\bibitem{leedham-green09.833}
C.~R. Leedham-Green and E.~A. O'Brien, \emph{Constructive recognition of
  classical groups in odd characteristic}, J. Algebra \textbf{322} (2009),
  no.~3, 833--881.

\bibitem{lubotzky01.347}
A.~Lubotzky and I.~Pak, \emph{The product replacement algorithm and {K}azhdan's
  property ({T})}, J. Amer. Math. Soc. \textbf{14} (2001), no.~2, 347--363.

\bibitem{mitrinovic1996}
D.~S. Mitrinovi\'{c}, J.~S\'{a}ndor, and B.~Crstici, \emph{Handbook of number
  theory}, Mathematics and its Applications, vol. 351, Kluwer Academic
  Publishers Group, Dordrecht, 1996.

\bibitem{pak00.476}
I.~Pak, \emph{The product replacement algorithm is polynomial}, 41st Annual
  Symposium on Foundations of Computer Science (Redondo Beach, CA, 2000), IEEE
  Comput. Soc. Press, Los Alamitos, CA, 2000, pp.~476--485.

\bibitem{pak.01.476}
I.~Pak, \emph{The product replacement algorithm is polynomial}, Proc.
  FOCS'2000, The 41st Ann. Symp. on Foundations of Comp. Sci. (2001), 476--485.

\bibitem{pak01.301}
I.~Pak, \emph{What do we know about the product replacement algorithm?},
  Groups and Computation III (W. M. Kantor and \'{A}. Seress, eds.), Ohio State
  Univ. Math. Res. Inst. Publ., vol.~8, de Gruyter, Berlin, 2001, pp.~301--347.

\bibitem{pak02.1891}
I.~Pak and A.~{\.Z}uk, \emph{On {K}azhdan constants and mixing of random
  walks}, Int. Math. Res. Not. (2002), no.~36, 1891--1905.

\bibitem{parker10.885}
C.~W. Parker and R.~A. Wilson, \emph{Recognising simplicity of black-box groups
  by constructing involutions and their centralisers}, J. Algebra \textbf{324}
  (2010), no.~5, 885--915.

\bibitem{rabin80.128}
M.~O. Rabin, \emph{Probabilistic algorithm for testing primality}, J. Number
  Theory \textbf{12} (1980), no.~1, 128--138.

\bibitem{steinberg1968}
R.~Steinberg, \emph{Lectures on {C}hevalley groups}, Yale University, New
  Haven, Conn., 1968, Notes prepared by John Faulkner and Robert Wilson.

\bibitem{suko02}
{\c{S}}.~Yal\c{c}{\i}nkaya, \emph{Construction of long root
  {S}{L}$_2(q)$-subgroups in black-box groups}, Available at arXiv,
  math.GR/1001.3184v1.

\bibitem{yalcinkaya07.171}
\c{S}. Yal{\c{c}}{\i}nkaya, \emph{Black box groups}, Turkish J. Math.
  \textbf{31} (2007), no.~suppl., 171--210. 

\bibitem{zsigmondy92.265}
K.~Zsigmondy, \emph{Zur {T}heorie der {P}otenzreste}, Monatsh. Math. Phys.
  \textbf{3} (1892), no.~1, 265--284.

\end{thebibliography}
\end{document}